\newtheorem{theorem}{Theorem}[section]
\newtheorem{lemma}[theorem]{Lemma}
\newtheorem{proposition}[theorem]{Proposition}
\theoremstyle{definition}
\newtheorem{definition}[theorem]{Definition}
\newtheorem{remark}[theorem]{Remark}
\newtheorem{observation}[theorem]{Observation}
\newtheorem{assumption}[theorem]{Assumption}
\newcommand{\IR}{\mathbb{R}}
\newcommand{\IC}{\mathbb{C}}
\newcommand{\IN}{\mathbb{N}}
\newcommand{\IP}{\mathbb{P}}
\newcommand{\cH}{\mathcal{H}}
\newcommand{\cM}{\mathcal{M}}
\newcommand{\cN}{\mathcal{N}}
\newcommand{\cF}{\mathcal{F}}
\newcommand{\cR}{\mathcal{R}}
\newcommand{\cA}{\mathcal{A}}
\newcommand{\cB}{\mathcal{B}}
\newcommand{\cC}{\mathcal{C}}
\newcommand{\cL}{\mathcal{L}}
\newcommand{\cT}{\mathcal{T}}
\renewcommand{\L}{\mathrm{L}}
\newcommand{\C}{\mathrm{C}}
\renewcommand{\H}{\mathrm{H}}
\renewcommand{\S}{\mathrm{S}}
\newcommand{\fa}{\mathfrak{a}}
\newcommand{\fb}{\mathfrak{b}}
\newcommand{\e}{\mathrm{e}}
\newcommand{\ii}{\mathrm{i}}
\renewcommand{\d}{\mathrm{d}}
\newcommand{\eps}{\varepsilon}
\newcommand{\loc}{\mathrm{loc}}
\renewcommand\Re{\operatorname{Re}}
\newcommand{\Lop}{\mathcal{L}}
\newcommand{\divergence}{\operatorname{div}}
\DeclareMathOperator{\supp}{supp}
\DeclareMathOperator{\dist}{dist}
\DeclareMathOperator{\tr}{tr}
\DeclareMathOperator{\diam}{diam}
\DeclareMathOperator{\Id}{Id}
\DeclareMathOperator{\dom}{\mathcal{D}}
\numberwithin{equation}{section}
\title[The generalized Stokes operator with bounded measurable coefficients]{A non-local approach to the generalized Stokes operator with bounded measurable coefficients}
\author{Patrick Tolksdorf}
\address{Institut f\"ur Mathematik, Johannes Gutenberg-Universit\"at Mainz, Staudingerweg 9, 55099 Mainz, Germany}
\email{tolksdorf@uni-mainz.de}
\subjclass[2010]{}
\date{\today}
\thanks{}
\begin{document}
\begin{abstract}
We establish functional analytic properties of the Stokes operator with bounded measurable coefficients on $\L^p_{\sigma} (\IR^d)$, $d \geq 2$, for $\lvert 1 / p - 1 / 2 \rvert < 1 / d$. These include optimal resolvent bounds and the property of maximal $\L^q$-regularity. We further give regularity estimates on the gradient of the solution to the Stokes resolvent problem with bounded measurable coefficients. As a key to these results we establish the validity of a non-local Caccioppoli inequality to solutions of the Stokes resolvent problem.
\end{abstract}
\maketitle

\section{Introduction}

\noindent This article is concerned with the investigation of the Stokes resolvent problem
\begin{align}
\label{Eq: Resolvent problem}
 \left\{ \begin{aligned}
  \lambda u - \divergence(\mu \nabla u) + \nabla \phi &= f && \text{in } \IR^d, \\
  \divergence(u) &= 0 && \text{in } \IR^d
 \end{aligned} \right.
\end{align}
for $\lambda$ in some complex sector $\S_{\omega} := \{ z \in \IC : \lvert \arg(z) \rvert < \omega \}$ for some suitable $\omega \in (\pi / 2 , \pi)$ depending on $d$ and the coefficients $\mu$. The coefficients $\mu_{\alpha \beta}^{i j}$ are assumed to be essentially bounded and complex valued; ellipticity is enforced by a G\r{a}rding type inequality. The equation~\eqref{Eq: Resolvent problem} is the resolvent equation for the Stokes operator with bounded measurable coefficients, which is formally given by
\begin{align*}
 A u = - \divergence(\mu \nabla u) + \nabla \phi, \quad \divergence(u) = 0 \quad \text{in} \quad \IR^d.
\end{align*}
On the space $\L^2_{\sigma} (\IR^d)$ - the space of solenoidal $\L^2$-integrable vector fields - the operator $A$ can be realized via a densely defined, closed, and sectorial sesquilinear form and thus one immediately derives by Kato's form method that there exists $\omega \in (\pi / 2 , \pi)$ such that the sector $\S_{\omega}$ is contained in the resolvent set $\rho(- A)$ of $- A$. Moreover, the operator $A$ is sectorial, i.e., there exists $C > 0$ such that for all $\lambda \in \S_{\omega}$ it holds with $p = 2$
\begin{align}
\label{Eq: L2 resolvent bounds}
 \|\lambda u\|_{\L^p_{\sigma}} = \| \lambda (\lambda + A)^{-1} \|_{\L^p_{\sigma}} \leq C \| f \|_{\L^p_{\sigma}} \qquad (f \in \L^p_{\sigma} (\IR^d)).
\end{align}

\indent A natural question is whether the estimate~\eqref{Eq: L2 resolvent bounds} has analogues in $\L^p_{\sigma}$-spaces for numbers $p \neq 2$. In the elliptic situation, i.e., if the operator $L u = - \divergence(\mu \nabla u)$ is considered, this question is well understood. Indeed, it is well-known~\cite{Auscher, Blunck_Kunstmann, Davies, Egert, Tolksdorf} that there exists $\eps > 0$ such that for all
\begin{align}
\label{Eq: Elliptic Lp interval}
 \Big\lvert \frac{1}{p} - \frac{1}{2} \Big\rvert < \frac{1}{d} + \eps
\end{align}
resolvent bounds of the form~\eqref{Eq: L2 resolvent bounds} are valid for the operator $L$. Moreover, it is well-known that the condition~\eqref{Eq: Elliptic Lp interval} is sharp among the class of all elliptic systems with bounded measurable coefficients. Indeed, Davies~\cite{Davies_example} constructed in $d \geq 3$ dimensions and for each $p > 2 d / (d - 2)$ coefficients $\mu$ such that the semigroups operators $\e^{- t L}$, $t > 0$, do not even map $\L^p (\IR^d ; \IC^d)$ into itself. Thus, in particular~\eqref{Eq: L2 resolvent bounds} fails for such $p$. To the best knowledge of the author, if the Stokes operator $A$ is concerned, the validity of the resolvent bounds~\eqref{Eq: L2 resolvent bounds} for some $p \neq 2$ is completely unknown. \par
It is well-known that the resolvent bounds~\eqref{Eq: L2 resolvent bounds} are the starting point for the investigation of further functional analytic properties of $A$. An immediate question is whether $A$ has the property of maximal $\L^q$-regularity and whether its $\H^{\infty}$-calculus is bounded. These properties are of great interest for the study of fractional powers of $A$ and eventually for well-posedness results of nonlinear problems. In particular, rough coefficients are of great interest in the investigation of non-Newtonian fluids in the regime of rough data. \par
If the coefficients $\mu$ are smooth enough, then all these properties mentioned above were established by Pr\"uss and Simonett~\cite{Pruss_Simonett} and Pr\"uss~\cite{Pruss} for all $1 < p < \infty$. Moreover, Solonnikov~\cite{Solonnikov} investigated operators that were not in divergence form. However, there is a big methodological difference between the smooth situation and the situation of mere essentially bounded coefficients as techniques like freezing the coefficients and arguing for variable coefficients via perturbation become unavailable. In the elliptic and rough situation variational techniques replace the method of freezing the coefficients. Indeed, employing Davies' method one establishes so-called off-diagonal estimates for the semigroup $(\e^{- t L})_{t \geq 0}$ which eventually imply the desired resolvent bounds in $\L^p$ for $p$ satisfying~\eqref{Eq: Elliptic Lp interval}, see~\cite{Auscher, Blunck_Kunstmann, Davies}. Another method is by using Caccioppoli's inequality for the resolvent equations $\lambda u + L u = f$ combined with Sobolev's embedding theorem to deduce the validity of certain weak reverse H\"older estimates that - by virtue of an $\L^p$-extrapolation theorem of Shen~\cite{Shen} - also imply the resolvent bounds in $\L^p$ for $p$ satisfying~\eqref{Eq: Elliptic Lp interval}, see~\cite{Tolksdorf}. \par
The use of these variational methods is again problematic if the Stokes operator $A$ is considered. The reason is that the derivation of off-diagonal estimates as well as of the Caccioppoli inequality rely on testing the resolvent equation by some appropriate function multiplied by a cut-off function. Due to the multiplication by the cut-off function, the test function is not divergence-free anymore so that the pressure appears in the inequalities and has to be treated. Recently, Chang and Kang~\cite{Chang_Kang} proved that it is impossible to establish a parabolic Caccioppoli inequality for the Stokes system on the half-space that has the same form as its elliptic counterpart. Up to now, there has not been a satisfactory way of how to handle this pressure term and the purpose of this work is provide an argument for the treatment of the pressure. \par
As the pressure embodies the non-local part in the resolvent problem~\eqref{Eq: Resolvent problem} some non-local terms will enter the inequalities. Kuusi, Mingione, and Sire investigated in~\cite{Kuusi_Mingione_Sire} non-local elliptic integrodifferential operators of fractional type and established a non-local Caccioppoli inequality in this situation. Inspired by their paper, the author extended in~\cite{Tolksdorf_nonlocal} the validity of their non-local Caccioppoli inequality to the resolvent equation of such operators and further extended the proof of Shen's $\L^p$-extrapolation theorem to these non-local estimates. In this paper, we will proceed similarly and establish a non-local Caccioppoli inequality for the Stokes resolvent problem~\eqref{Eq: Resolvent problem}. As this non-local Caccioppoli inequality is only valid for solenoidal right-hand sides $f$, we further need to adapt the extrapolation argument of Shen as this requires general right-hand sides in $\L^2 (\IR^d ; \IC^d)$. \par
Let us introduce some notation to state the main results: 

\begin{assumption}
\label{Ass: Coefficients}
The coefficients $\mu = (\mu_{\alpha \beta}^{i j})_{\alpha , \beta , i , j = 1}^d$ with $\mu_{\alpha \beta}^{i j} \in \L^{\infty} (\IR^d ; \IC)$ for all $1 \leq \alpha , \beta , i , j \leq d$ satisfy for some $\mu_{\bullet} , \mu^{\bullet} > 0$ the inequalities
\begin{align}
\label{Eq: Ellipticity}
 \Re \sum_{\alpha , \beta , i , j = 1}^d \int_{\IR^d} \mu^{i j}_{\alpha \beta} \partial_{\beta} u_j \overline{\partial_{\alpha} u_i} \, \d x \geq \mu_{\bullet} \| \nabla u \|_{\L^2}^2 \qquad (u \in \H^1 (\IR^d ; \IC^d))
\end{align}
and
\begin{align}
\label{Eq: Boundedness}
 \max_{1 \leq i , j , \alpha , \beta \leq d} \| \mu^{i j}_{\alpha \beta} \|_{\L^{\infty}} \leq \mu^{\bullet}.
\end{align}
\end{assumption} 

The operator $A$ is realized on $\L^2_{\sigma} (\IR^d)$ as follows. Let $\H^1_{\sigma} (\IR^d) := \{ f \in \H^1 (\IR^d ; \IC^d) : \divergence(f) = 0 \}$. Define the sesquilinear form
\begin{align*}
 \fa : \H^1_{\sigma} (\IR^d) \times \H^1_{\sigma} (\IR^d) \to \IC, \quad (u , v) \mapsto \sum_{\alpha , \beta , i , j = 1}^d \int_{\IR^d} \mu_{\alpha \beta}^{i j} \partial_{\beta} u_j \overline{\partial_{\alpha} v_i} \, \d x
\end{align*}
and define the domain of $A$ on $\L^2_{\sigma} (\IR^d)$ as
\begin{align*}
 \dom(A) := \bigg\{ u \in \H^1_{\sigma} (\IR^d) : \, \exists f \in \L^2_{\sigma} (\IR^d) \text{ such that } \forall v \in \H^1_{\sigma} (\IR^d) \text{ it holds } \fa (u , v) = \int_{\IR^d} f \cdot \overline{v} \, \d x  \bigg\}.
\end{align*}
Now, for $u \in \dom(A)$ define $A u := f$. The non-local Caccioppoli inequality we prove reads as follows.

\begin{theorem}
\label{Thm: Non-local Caccioppoli}
Let $\mu$ satisfy Assumption~\ref{Ass: Coefficients} for some constants $\mu_{\bullet} , \mu^{\bullet} > 0$. Then there exists $\omega \in (\pi / 2 , \pi)$ such that for all $\theta \in (0 , \omega)$ and all $0 < \nu < d + 2$ there exists $C > 0$ such that for all $\lambda \in \S_{\theta}$, $f \in \L^2_{\sigma} (\IR^d)$, $F \in \L^2 (\IR^d ; \IC^{d \times d})$ the solution $u \in \H^1_{\sigma} (\IR^d)$ to
\begin{align*}
 \lambda \int_{\IR^d} u \cdot \overline{v} \, \d x + \fa (u , v) = \int_{\IR^d} f \cdot \overline{v} \, \d x - \sum_{\alpha , \beta = 1}^d \int_{\IR^d} F_{\alpha \beta} \, \overline{\partial^{\alpha} v_{\beta}} \, \d x \qquad (v \in \H^1_{\sigma} (\IR^d))
\end{align*}
satisfies for all balls $B = B(x_0 , r)$ and all sequences $(c_k)_{k \in \IN_0}$ with $c_k \in \IC^d$
\begin{align*}
 &\lvert \lambda \rvert \sum_{k = 0}^{\infty} 2^{- \nu k} \int_{B(x_0 , 2^k r)} \lvert u \rvert^2 \, \d x + \sum_{k = 0}^{\infty} 2^{- \nu k} \int_{B(x_0 , 2^k r)} \lvert \nabla u \rvert^2 \, \d x \\
 &\qquad \leq \frac{C}{r^2} \sum_{k = 0}^{\infty} 2^{- (\nu + 2) k} \int_{B(x_0 , 2^{k + 1} r)} \lvert u + c_k \rvert^2 \, \d x + \lvert \lambda \rvert \sum_{k = 0}^{\infty} \lvert c_k \rvert 2^{- \nu k} \int_{B(x_0 , 2^{k + 1} r)} \lvert u \rvert \, \d x \\
 &\qquad\qquad + \frac{C}{\lvert \lambda \rvert} \sum_{k = 0}^{\infty} 2^{- \nu k} \int_{B(x_0 , 2^{k + 1} r)} \lvert f \rvert^2 \, \d x + C \sum_{k = 0}^{\infty} 2^{- \nu k} \int_{B(x_0 , 2^{k + 1} r)} \lvert F \rvert^2 \, \d x.
\end{align*}
The constant $\omega$ only depends on $\mu_{\bullet}$, $\mu^{\bullet}$, and $d$ and $C$ depends on $\mu_{\bullet}$, $\mu^{\bullet}$, $d$, $\theta$, and $\nu$.
\end{theorem}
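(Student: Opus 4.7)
The strategy is to test the Stokes equation (recast in divergence form with a pressure via de Rham) with cutoff multiples of $u + c_k$ at each dyadic scale, and then sum the resulting scale-$k$ estimates with the weights $2^{-\nu k}$. The pressure contribution is non-local and produces a tail in the form of a geometric series over scales; the assumption $\nu < d + 2$ is precisely what will be needed to control the resulting double series.

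First, by de Rham's theorem there exists $\phi \in \L^2_{\loc}(\IR^d)$, determined up to additive constants, such that $\lambda u - \divergence(\mu \nabla u) + \nabla \phi = f - \divergence F$ holds distributionally in $\IR^d$; this allows us to test with non-solenoidal $\H^1$-vector fields, at the cost of an explicit pressure term. For fixed $k \in \IN_0$ and $B_k := B(x_0, 2^k r)$, the plan is to pick $\eta_k \in C_c^\infty(B_{k+1})$ with $\eta_k \equiv 1$ on $B_k$ and $\|\nabla \eta_k\|_{\infty} \leq C(2^k r)^{-1}$, and test with $v_k := \eta_k^2 (u + c_k)$. Using \eqref{Eq: Ellipticity}, \eqref{Eq: Boundedness}, Young's inequality with a small parameter (to absorb $\int \eta_k^2 |\nabla u|^2$ on the left), and the standard sectoriality trick (taking the appropriate real linear combination of the real and imaginary parts of the tested equation, so that both $|\lambda|\|\eta_k u\|_{\L^2}^2$ and $\|\eta_k \nabla u\|_{\L^2}^2$ appear on the left for $\lambda \in \S_{\theta}$), this should yield at each scale an inequality of the form
\begin{align*}
 |\lambda| \int_{B_k} |u|^2 \, \d x + \int_{B_k} |\nabla u|^2 \, \d x &\leq \frac{C}{(2^k r)^2} \int_{B_{k+1}} |u + c_k|^2 \, \d x + C |\lambda| |c_k| \int_{B_{k+1}} |u| \, \d x \\
 &\qquad + \frac{C}{|\lambda|} \int_{B_{k+1}} |f|^2 \, \d x + C \int_{B_{k+1}} |F|^2 \, \d x + \cP_k,
\end{align*}
where $\cP_k := \big| \int_{A_k} \phi \, \overline{\nabla \eta_k^2 \cdot (u + c_k)} \, \d x \big|$ is the pressure remainder, supported on the annulus $A_k := B_{k+1} \setminus B_k$.

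The crux is the estimate of $\cP_k$. Since $\divergence u = 0$ and the integrand is compactly supported, $\int \nabla \eta_k^2 \cdot (u + c_k) \, \d x = 0$, so $\phi$ may be replaced by $\phi - c$ for any constant $c$. Choosing $c = \bar{\phi}_k$ equal to the mean value of $\phi$ on $A_k$ and applying Cauchy–Schwarz yields $\cP_k \leq C(2^k r)^{-1} \|\phi - \bar{\phi}_k\|_{\L^2(A_k)} \|u + c_k\|_{\L^2(A_k)}$. To bound the oscillation of $\phi$, I would take the divergence of the equation, which using $\divergence u = 0$ eliminates the $\lambda u$-term and gives the Poisson problem $\Delta \phi = \divergence \divergence(\mu \nabla u) + \divergence f - \divergence \divergence F$; writing $\phi$ via the Newton potential, performing a dyadic decomposition of the source according to $B_{k+j+1} \setminus B_{k+j}$, and using the smoothness of the Newton kernel off the diagonal together with $\L^2$ Calderón–Zygmund estimates, I expect to obtain the non-local pressure bound
\begin{align*}
 \|\phi - \bar{\phi}_k\|_{\L^2(A_k)}^2 \leq C \sum_{j = 0}^\infty 2^{-\alpha j} \Big( \|\nabla u\|_{\L^2(B_{k+j+1})}^2 + (2^{k+j} r)^2 \|f\|_{\L^2(B_{k+j+1})}^2 + \|F\|_{\L^2(B_{k+j+1})}^2 \Big),
\end{align*}
with a geometric decay rate $\alpha$ that can be made arbitrarily large; in particular $\alpha > \nu$ is available for every $\nu < d + 2$. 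This is the Stokes analogue of the non-local Caccioppoli tail of~\cite{Tolksdorf_nonlocal}.

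Plugging this into the scale-$k$ inequality, using Young's inequality to split the product in $\cP_k$ into a piece $\delta (2^k r)^{-2} \|u + c_k\|_{\L^2(A_k)}^2$ (absorbable into the first term on the right) and a residual pressure tail of order $\delta^{-1}$, multiplying by $2^{-\nu k}$, and summing over $k$, the first four terms on the right immediately reproduce the four sums in the statement. The pressure tail, after Tonelli and swapping the order of summation, rearranges as $\sum_m [\text{data at } B_{m+1}] \sum_{k=0}^{m} 2^{-\nu k} 2^{-\alpha(m-k)} \leq C \sum_m 2^{-\nu m} [\text{data at } B_{m+1}]$, the inner sum being finite precisely because $\alpha > \nu$. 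The resulting $\|\nabla u\|^2$-terms are of the same type as the left-hand side and are absorbed for $\delta$ sufficiently small; the $\|f\|^2$ and $\|F\|^2$ contributions reproduce the remaining two source sums on the right. The main obstacle is the non-local pressure estimate, namely the extraction of a geometric decay rate $\alpha > \nu$ for the oscillation of $\phi$ on dyadic annuli in terms of global data. This quantifies the non-locality of the Leray projection in the Stokes system with merely bounded measurable coefficients, and is precisely what forces the hypothesis $\nu < d + 2$; once it is in hand, the remainder of the proof follows the Caccioppoli summation scheme developed in~\cite{Tolksdorf_nonlocal} for non-local operators.
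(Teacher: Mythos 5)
Your overall architecture matches the paper: test with $\eta_k^2(u+c_k)$ at each dyadic scale, estimate the non-local pressure oscillation, and sum against $2^{-\nu k}$. The route to the pressure estimate, however, differs: you propose to take the divergence of the equation and write $\phi$ via the Newton potential of $\divergence\divergence(\mu\nabla u - F)$, whereas the paper tests the weak formulation directly with $v_k := (\Id - \IP)\,\cB_{\cC_k}\big((\phi - \phi_{\cC_k})|_{\cC_k}\big)$, built from the rescaled Bogovski\u{\i} operator and the kernel of the Helmholtz projection. That test-function route avoids representing $\phi\in\L^2_{\loc}$ by a possibly divergent Newton potential and directly produces the oscillation bound (the paper's Lemma~\ref{Lem: Non-local pressure estimate}) without passing through a second elliptic PDE for $\phi$.

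There is, however, a genuine gap in your argument. You assert that the geometric decay rate $\alpha$ in the pressure bound ``can be made arbitrarily large,'' with $\alpha > \nu$ available for every $\nu < d+2$. This is not correct: the decay rate is \emph{fixed} by the dimension, because it is dictated by the decay of the Helmholtz/Calder\'on--Zygmund kernel. For a point $x\in A_k$ and data concentrated on an outer annulus $A_\ell$, $\ell\geq k+2$, the mean-oscillation bound
\begin{align*}
 \big|\phi(x) - \bar\phi_k\big| \;\lesssim\; \frac{2^k r}{(2^\ell r)^{d+1}}\int_{A_\ell}|g|\,\d y
\end{align*}
forces $\|\phi - \bar\phi_k\|_{\L^2(A_k)} \lesssim 2^{(\frac{d}{2}+1)(k-\ell)}\|g\|_{\L^2(A_\ell)}$, hence after squaring a decay of $2^{(d+2)(k-\ell)}$ --- no more, no less. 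It is precisely this rate that produces the hypothesis $\nu < d+2$; it is not the result of a tunable parameter. You get the right constraint, but for the wrong reason, and the argument as stated is circular (you use $\nu < d+2$ to justify choosing $\alpha > \nu$, instead of deriving the constraint from the fixed $\alpha = d+2$). In addition, your pressure tail only sums over outer scales $j\geq 0$; the oscillation of $\phi$ on $A_k$ also receives contributions from $A_\ell$ with $\ell\leq k-2$, where the decay is $2^{\frac{d}{2}(\ell-k)}$ per scale (the paper's Lemma~\ref{Lem: Non-local pressure estimate} has all three regimes). Those inner contributions happen not to impose an extra constraint on $\nu$, but they must be present. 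Finally, the $f$-contribution in your pressure bound should vanish identically since $f$ is solenoidal so $\divergence f = 0$; keeping it is harmless but suggests the source identity $\Delta\phi = \divergence\divergence(\mu\nabla u - F)$ was not written down carefully. Once the decay exponent is pinned to $d/2+1$ (outer) and $d/2$ (inner), the summation step also needs a Cauchy--Schwarz split of the weights in the style of the paper's Lemma~\ref{Lem: Square in series}, since the Caccioppoli step requires $\|\phi-\bar\phi_k\|^2$ and you must square a weighted sum, not a sum of squares.
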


As described above, the non-local Caccioppoli inequality allows to establish resolvent bounds in $\L^p_{\sigma} (\IR^d)$. More precisely, we have the following result.

\begin{theorem}
\label{Thm: Resolvent}
Let $\mu$ satisfy Assumption~\ref{Ass: Coefficients} for some constants $\mu_{\bullet} , \mu^{\bullet} > 0$. There exist $\omega \in (\pi / 2 , \pi)$ such that for all $p$ satisfying
\begin{align}
\label{Eq: Lp interval}
\Big\lvert \frac{1}{p} - \frac{1}{2} \Big\rvert < \frac{1}{d}
\end{align}
and all $\theta \in (0 , \omega)$ there exists $C > 0$ such that for all $\lambda \in \S_{\theta}$ is holds
\begin{align*}
 \| \lambda (\lambda + A)^{-1} f \|_{\L^p_{\sigma}} \leq C \| f \|_{\L^p_{\sigma}} \qquad (f \in \L^2_{\sigma} (\IR^d) \cap \L^p_{\sigma} (\IR^d)).
\end{align*}
The constant $C > 0$ depends only on $d$, $\mu_{\bullet}$, $\mu^{\bullet}$, $p$, and $\theta$. The constant $\omega$ depends only on $d$, $\mu_{\bullet}$, and $\mu^{\bullet}$.
\end{theorem}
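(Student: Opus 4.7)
The plan is to combine the $\L^2$-bound~\eqref{Eq: L2 resolvent bounds} coming from Kato's form method with a non-local variant of Shen's $\L^p$-extrapolation theorem, driven by the non-local Caccioppoli inequality of Theorem~\ref{Thm: Non-local Caccioppoli}. I would first establish the bound in the range $p \in (2, 2d/(d-2))$ and then obtain $p \in (2d/(d+2), 2)$ by a duality argument: the adjoint Stokes operator has the same form with coefficient tensor $\overline{\mu_{\beta\alpha}^{ji}}$ and satisfies Assumption~\ref{Ass: Coefficients} with the same constants $\mu_\bullet, \mu^\bullet$, so the $\L^{p'}$-bound for $p' > 2$ for the adjoint yields the $\L^p$-bound for $p < 2$ for $A$.

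The first step is to derive a weak reverse Hölder inequality for solutions of the homogeneous resolvent problem. Fix $\lambda \in \S_\theta$ and let $u \in \H^1_\sigma(\IR^d)$ solve $\lambda u + Au = 0$ on a neighbourhood of a ball $B = B(x_0, r)$. Applying Theorem~\ref{Thm: Non-local Caccioppoli} on $\tfrac{1}{2} B$ with $f = 0$, $F = 0$ and the $c_k$ chosen as suitable averages of $u$ over $B(x_0, 2^{k+1} r)$, one uses Poincaré's inequality to convert the first two terms on the right-hand side into a tail average of $\nabla u$. Sobolev's embedding $\H^1 \hookrightarrow \L^{2d/(d-2)}$ applied on a slightly smaller ball then yields
\begin{align*}
 \biggl(\frac{1}{|B|} \int_B \bigl(|\nabla u|^2 + |\lambda|\, |u|^2\bigr)^{q/2} \, \d x \biggr)^{1/q} \leq C \sum_{k=0}^\infty 2^{-\nu k} \biggl(\frac{1}{|2^{k+1} B|} \int_{2^{k+1} B} \bigl(|\nabla u|^2 + |\lambda|\, |u|^2\bigr) \, \d x \biggr)^{1/2}
\end{align*}
for some $q > 2$ depending only on $d$, with $\nu$ chosen large enough that the geometric series absorbs the volume factor $2^{kd/2}$. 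The whole estimate is scale-invariant under the parabolic rescaling $x \mapsto |\lambda|^{1/2} x$, so no additional dependence on $r$ or $|\lambda|$ enters.

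The next step is to feed this inequality into the non-local Shen-type extrapolation scheme from~\cite{Tolksdorf_nonlocal}, applied to the operator $T_\lambda := \lambda (\lambda + A)^{-1}$ on $\L^2_\sigma(\IR^d)$. Shen's localization splits a datum $f$ into the local piece $f\,\ind_{2B}$ and the tail $f\,\ind_{(2B)^c}$. The main obstacle is that $f\,\ind_{2B}$ is generally not solenoidal, while Theorem~\ref{Thm: Non-local Caccioppoli} requires the right-hand side to lie in $\L^2_\sigma$. My plan is to replace the sharp cut-off by the Helmholtz projection $\IP (f\,\ind_{2B}) \in \L^2_\sigma(\IR^d)$, which is bounded by $\|f\|_{\L^2(2B)}$ in $\L^2$; the non-solenoidal remainder $f\,\ind_{2B} - \IP(f\,\ind_{2B})$ is a gradient field and hence lies in the kernel of $T_\lambda$ when viewed on $\L^2_\sigma$. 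The tail $T_\lambda(f\,\ind_{(2B)^c})$ solves the homogeneous resolvent equation on $2B$, so the reverse Hölder estimate of the previous step applies, and the geometric factors $2^{-\nu k}$ ensure that the non-local tails couple with the stopping-time argument of~\cite{Shen, Tolksdorf_nonlocal} without loss.

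This produces the $\L^p$-bound for some $p_0 > 2$ arbitrarily close to $2d/(d-2)$; interpolation with~\eqref{Eq: L2 resolvent bounds} covers $p \in [2, 2d/(d-2))$, and duality completes the interval~\eqref{Eq: Lp interval}. The stated dependence of the constants comes from the parabolic scaling invariance and from the uniform bounds of Theorem~\ref{Thm: Non-local Caccioppoli}. The technical heart of the proof is the adaptation of Shen's localization to solenoidal data via the Helmholtz projection; with this ingredient in place the remainder of the argument follows the non-local extrapolation scheme of~\cite{Tolksdorf_nonlocal} with only cosmetic modifications.
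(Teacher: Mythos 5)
Your overall strategy — use the non-local Caccioppoli inequality to derive a weak reverse H\"older estimate and feed it into a Shen-type $\L^p$-extrapolation, then dualize for $p < 2$ — is the right one and is indeed what the paper does. The gap is in the key localization step, where you propose to replace the sharp cut-off $f\,\ind_{2B}$ by $\IP(f\,\ind_{2B})$. The Helmholtz projection $\IP$ is a non-local operator, so $\IP(f\,\ind_{(2B)^c}) = f - \IP(f\,\ind_{2B})$ does \emph{not} vanish on $2B$; its values on $2B$ are $-(\Id-\IP)(f\,\ind_{(2B)^c})$, which is generically nonzero. Consequently the statement ``the tail $T_\lambda(f\,\ind_{(2B)^c})$ solves the homogeneous resolvent equation on $2B$'' is false: $v := (\lambda+A)^{-1}\IP(f\,\ind_{(2B)^c})$ satisfies $\lambda v - \divergence(\mu\nabla v) + \nabla\phi = \lambda\IP(f\,\ind_{(2B)^c})$, whose right-hand side does not vanish near $B$. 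You could try to absorb the gradient field $-\lambda(\Id-\IP)(f\,\ind_{(2B)^c})$ into the pressure, but that modifies the pressure by $-\lambda\Delta^{-1}\divergence(f\,\ind_{(2B)^c})$, and tracking this through the non-local Caccioppoli estimate produces an extra $\lvert\lambda\rvert$-weighted term on the right that destroys the scale-invariant form needed for the good-$\lambda$ argument (the Caccioppoli estimate has a factor $\lvert\lambda\rvert^{-1}$ in front of $\int\lvert g\rvert^2$, so feeding in $g = \lambda\IP(f\,\ind_{(2B)^c})$ leaves a residual $\lvert\lambda\rvert$).

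The paper circumvents this by \emph{not decomposing $f$ at all}. Instead, Lemma~\ref{Lem: Preparation of reverse Holder} decomposes the solution $u = u_1 + u_2$, where $u_1$ solves a local Stokes problem with Neumann-type boundary conditions on a ball $B(x_0,r_0)$ with data $f\rvert_{B(x_0,r_0)}$ — this restriction of a solenoidal $f$ to a ball \emph{is} in $\cL^2_\sigma(B(x_0,r_0))$, so no cut-off error arises — and $u_2 := u - \widetilde{u_1}$ then solves the homogeneous problem inside the ball. This decomposition of $T_\lambda f$ is not induced by any decomposition of $f$, which is precisely why the paper had to reformulate Shen's extrapolation as Theorem~\ref{Thm: Modified Shen whole space}: the hypothesis is the good-$\lambda$ inequality~\eqref{Eq: Generalized weak reverse Hoelder inequality IR^d} itself, stated directly for $T(f)$, rather than a weak reverse H\"older estimate for $T$ acting on data vanishing near the ball. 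The discussion in Section~\ref{Sec: A glimpse onto a non-local Lp-extrapolation theorem} is explicit on this point: the usual argument decomposes $Tf = Tf\chi_{2\iota Q^*} + Tf\chi_{\IR^d\setminus 2\iota Q^*}$, which fails here for exactly the reason your attempt does. If you want to rescue your write-up, replace the Helmholtz projection of the cut-off datum by the local Stokes--Neumann decomposition of the solution and switch to the modified Shen theorem; the remaining ingredients (non-local Caccioppoli, Lemma~\ref{Lem: Non-local pressure estimate}, Sobolev embedding, duality via Kalton--Weis for $p<2$) then match the paper.
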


Additionally to the $\L^p$-resolvent estimates in Theorem~\ref{Thm: Resolvent} we establish further regularity estimates for solutions to the Stokes resolvent problem.

\begin{theorem}
\label{Thm: Gradient}
Let $\mu$ satisfy Assumption~\ref{Ass: Coefficients} for some constants $\mu_{\bullet} , \mu^{\bullet} > 0$. There exist $\omega \in (\pi / 2 , \pi)$ such that for all $p$ satisfying
\begin{align}
\label{Eq: Interval gradient estimates}
 \frac{2 d}{d + 2} < p \leq 2
\end{align}
and all $\theta \in (0 , \omega)$ there exists $C > 0$ such that for all $\lambda \in \S_{\theta}$ is holds
\begin{align*}
 \lvert \lambda \rvert^{1 / 2} \| \nabla (\lambda + A)^{-1} f \|_{\L^p_{\sigma}} \leq C \| f \|_{\L^p_{\sigma}} \qquad (f \in \L^2_{\sigma} (\IR^d) \cap \L^p_{\sigma} (\IR^d)).
\end{align*}
The constant $C > 0$ depends only on $d$, $\mu_{\bullet}$, $\mu^{\bullet}$, $p$, and $\theta$. The constant $\omega$ depends only on $d$, $\mu_{\bullet}$, and $\mu^{\bullet}$.
\end{theorem}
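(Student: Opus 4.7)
The plan is to prove Theorem~\ref{Thm: Gradient} by duality, reducing it to an $\L^{p'}$-estimate for the Stokes resolvent problem with divergence-form data associated to the adjoint form $\fa^*(u,v) := \overline{\fa(v,u)}$, whose coefficients $(\mu^*)^{ij}_{\alpha\beta} = \overline{\mu^{ji}_{\beta\alpha}}$ satisfy Assumption~\ref{Ass: Coefficients} with the same constants $\mu_\bullet, \mu^\bullet$. The dual statement to prove is: for each $p' \in [2, 2d/(d-2))$, each $\theta \in (0, \omega)$, and each $\lambda \in \S_\theta$, the unique weak solution $w \in \H^1_\sigma(\IR^d)$ of
\begin{align*}
\bar\lambda \int_{\IR^d} w \cdot \bar v \, \d x + \fa^*(w, v) = - \sum_{\alpha, \beta = 1}^d \int_{\IR^d} G_{\alpha\beta} \overline{\partial^\alpha v_\beta} \, \d x \qquad (v \in \H^1_\sigma(\IR^d))
\end{align*}
satisfies $\lvert \lambda \rvert^{1/2} \|w\|_{\L^{p'}_\sigma} \leq C \|G\|_{\L^{p'}}$ for all $G \in \L^2 \cap \L^{p'}(\IR^d; \IC^{d\times d})$.

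Granted this dual estimate, the theorem follows by pairing: for $f \in \L^2_\sigma \cap \L^p_\sigma$, $u = (\lambda + A)^{-1} f$, and $G \in C^\infty_c(\IR^d; \IC^{d\times d})$, testing the defining identities of $u$ and $w$ against each other (with $v = w$ in the first, $v = u$ in the second) and taking the complex conjugate of the latter yields the pairing $\int \nabla u \cdot \overline{G} \, \d x = -\int f \cdot \overline{w} \, \d x$. H\"older's inequality and the dual estimate then give
\begin{align*}
\lvert \lambda \rvert^{1/2} \Big| \int \nabla u \cdot \overline{G} \, \d x \Big| \leq \|f\|_{\L^p} \cdot \lvert \lambda \rvert^{1/2} \|w\|_{\L^{p'}} \leq C \|f\|_{\L^p} \|G\|_{\L^{p'}},
\end{align*}
and Riesz representation delivers the desired gradient bound after a density argument on $G$.

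It therefore remains to prove the dual estimate. The endpoint $p' = 2$ follows from G\r{a}rding's inequality: testing against $w$ and applying Young's inequality produces $\lvert \lambda \rvert \|w\|_2^2 + \|\nabla w\|_2^2 \leq C \|G\|_2^2$. To cover $p' \in (2, 2d/(d-2))$, the plan is to apply the non-local variant of Shen's $\L^p$-extrapolation theorem developed by the author in~\cite{Tolksdorf_nonlocal} to the $\L^2$-bounded linear map $G \mapsto \lvert \lambda \rvert^{1/2} w$. The non-local reverse H\"older hypothesis feeding into this extrapolation theorem will be derived from Theorem~\ref{Thm: Non-local Caccioppoli} applied to $\fa^*$ with $f = 0$ and $F = G$, choosing the free constants $c_k \in \IC^d$ to equal minus the mean value of $w$ over $B(x_0, 2^{k+1} r)$ on the dyadic scales $2^k r \lesssim \lvert \lambda \rvert^{-1/2}$ (where Poincar\'e's inequality is favorable) and $c_k = 0$ on the remaining scales (where the $\lvert \lambda \rvert\|w\|^2$ contribution already dominates the oscillation). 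Combining with the Sobolev--Poincar\'e embedding $\|w - (w)_B\|_{\L^{2^*}(B)} \leq C \|\nabla w\|_{\L^2(B)}$ on each dyadic ball will produce a non-local reverse-H\"older inequality schematically of the form
\begin{align*}
 \Big( \fint_{B} \lvert \lambda \rvert^{2^*/2} |w|^{2^*} \, \d x \Big)^{1/2^*} \leq C \Big( \sum_{k \geq 0} 2^{-\alpha k} \fint_{2^{k+1} B} \lvert \lambda \rvert |w|^2 \, \d x \Big)^{1/2} + C \Big( \sum_{k \geq 0} 2^{-\alpha k} \fint_{2^{k+1} B} |G|^2 \, \d x \Big)^{1/2}
\end{align*}
for some $\alpha > 0$ and every ball $B = B(x_0, r)$; this is exactly the input required by the non-local extrapolation of~\cite{Tolksdorf_nonlocal}, which then yields $\L^{p'}$-boundedness uniformly in $\lambda \in \S_\theta$.

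The main obstacle is the second step: the Caccioppoli inequality mixes the $\lvert \lambda \rvert\|w\|^2$ and $\|\nabla w\|^2$ contributions through the characteristic scale $\lvert \lambda \rvert^{-1/2}$, so one must use a single choice of $(c_k)$ compatible with both the Poincar\'e regime $2^k r \lesssim \lvert \lambda \rvert^{-1/2}$ and the high-frequency regime $2^k r \gtrsim \lvert \lambda \rvert^{-1/2}$. In particular, the mixed term $\lvert \lambda \rvert \sum_k |c_k| 2^{-\nu k} \int_{2^{k+1} B} |w| \, \d x$ appearing in Theorem~\ref{Thm: Non-local Caccioppoli} has only a linear dependence on $w$ and must be split by Cauchy--Schwarz in order to recover a quadratic structure that can be absorbed into the dyadic reverse-H\"older sum. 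Once this technical bookkeeping is carried out, the non-local extrapolation theorem of~\cite{Tolksdorf_nonlocal} promotes the $\L^2$-boundedness of $G \mapsto \lvert \lambda \rvert^{1/2} w$ to $\L^{p'}$-boundedness on the full range $p' \in [2, 2d/(d-2))$, and the duality argument above completes the proof.
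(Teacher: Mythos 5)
Your duality reduction is correct and matches the paper's (the paper phrases it more tersely as proving $\L^{p'}$-boundedness of $T := \lvert \lambda \rvert^{1/2}(\lambda + A)^{-1}\IP\divergence$, but it amounts to the same pairing you carry out). The gap is in the second step: you plan to apply the \emph{original} extrapolation theorem of~\cite{Tolksdorf_nonlocal}, feeding it a weak reverse H\"older inequality extracted from Theorem~\ref{Thm: Non-local Caccioppoli} together with Sobolev--Poincar\'e, and you defer the "technical bookkeeping" of choosing $(c_k)$. But this bookkeeping cannot be carried out, because the scalings do not match no matter how $(c_k)$ is chosen. Concretely: after Sobolev's inequality one must bound $\lvert\lambda\rvert r^2 \fint_B \lvert \nabla w\rvert^2$ by $\sup_{B'}\fint_{B'}\bigl(\lvert\lambda\rvert\lvert w\rvert^2 + \lvert G\rvert^2\bigr)$. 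With $c_k = 0$ the non-local Caccioppoli gives
\begin{align*}
\lvert\lambda\rvert r^2 \fint_B\lvert\nabla w\rvert^2 \leq C\sum_k 2^{(d-\nu-2)k}\fint_{B_{k+1}}\lvert\lambda\rvert\lvert w\rvert^2 + C\,\lvert\lambda\rvert r^2 \sum_k 2^{(d-\nu)k}\fint_{B_{k+1}}\lvert G\rvert^2 ,
\end{align*}
and the unbounded prefactor $\lvert\lambda\rvert r^2$ on the $\lvert G\rvert^2$-terms persists (taking e.g.\ $\lvert\lambda\rvert = r^{-4}$, it blows up). Choosing $c_k = -(w)_{B_{k+1}}$ and using Poincar\'e just converts the $r^{-2}$-oscillation term into $\sum 2^{-\nu k}\int_{B_{k+1}}\lvert\nabla w\rvert^2$, which has the same size as the left-hand side of Theorem~\ref{Thm: Non-local Caccioppoli} (with an index shift and a $2^\nu$ prefactor) and cannot be absorbed; the "mixed" term, after Cauchy--Schwarz, likewise returns $\lvert\lambda\rvert\sum 2^{-\nu k}\int_{B_{k+1}}\lvert w\rvert^2$ with prefactor $1$ and cannot be absorbed either. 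No $\lambda$-dependent splitting of the dyadic scales fixes the $\lvert\lambda\rvert r^2$ imbalance on the data.

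The missing idea is the decomposition of Lemma~\ref{Lem: Preparation of reverse Holder}: write $u = u_1 + u_2$ where $u_1$ solves a Stokes problem with Neumann boundary conditions on the ball $B(x_0, 2r)$ with right-hand side $\divergence(F|_{B(x_0,2r)})$, so the local $\L^2$-theory gives $\lvert\lambda\rvert^{1/2}\|u_1\|_{\L^2} + \|\nabla u_1\|_{\L^2} + \|\phi_1\|_{\L^2} \leq C\|F\|_{\L^2(B(x_0,2r))}$ with the correct scaling. The estimate~\eqref{Eq: Second estimate} for $u_2$ then has the prefactors $\lvert\lambda\rvert^3 r^2$ and $\lvert\lambda\rvert^2 r^2$ on the left and $\lvert\lambda u\rvert^2$, $\lvert \lvert\lambda\rvert^{1/2} F\rvert^2$ on the right; dividing by $\lvert\lambda\rvert^2$ makes the reverse H\"older uniform in $\lambda$ and $r$. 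Crucially, this decomposition of $u$ is \emph{not} induced by a sharp cutoff of $F$, so the original extrapolation theorem of~\cite{Tolksdorf_nonlocal} (Theorem~\ref{Thm: Non-local, vector valued Shen}) does not apply and one needs its modified form, Theorem~\ref{Thm: Modified Shen whole space}, which takes the good-$\lambda$ inequality~\eqref{Eq: Generalized weak reverse Hoelder inequality IR^d} directly as a hypothesis and allows arbitrary decompositions of $T(f)$.
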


Theorem~\ref{Thm: Resolvent} allows to realize the operator $A$ as a sectorial operator on the $\L^p_{\sigma}$-spaces for $p$ satisfying~\eqref{Eq: Lp interval}. It is well-known that this is equivalent to the fact that $- A$ generates a bounded analytic semigroup $(\e^{- t A})_{t \geq 0}$ on $\L^p_{\sigma} (\IR^d)$. Additionally, Theorem~\ref{Thm: Gradient} tells us that this semigroup satisfies for $p$ subject to~\eqref{Eq: Interval gradient estimates} gradient estimates of the form
\begin{align*}
 t^{1 / 2} \| \nabla \e^{- t A} f \|_{\L^p} \leq C \| f \|_{\L^p_{\sigma}} \qquad (t > 0 , f \in \L^p_{\sigma} (\IR^d)).
\end{align*}
We mention here the result of Kaplick\'y and Wolf~\cite{Kaplicky_Wolf} who prove a Meyers'-type higher integrability result to obtain even integrability properties for the gradient of the instationary solution for $p$ being slightly larger than $2$. \par
We further prove that the $\L^p_{\sigma}$-realizations of $A$ have the property of maximal $\L^q$-regularity as the following theorem states. 

\begin{theorem}
\label{Thm: Max Reg}
Let $\mu$ satisfy Assumption~\ref{Ass: Coefficients} for some constants $\mu_{\bullet} , \mu^{\bullet} > 0$. Then for all $p$ satisfying~\eqref{Eq: Lp interval} the $\L^p_{\sigma}$-realization of $A$ has maximal $\L^q$-regularity for any $1 < q < \infty$. More precisely, for any $f \in \L^q (0 , \infty ; \L^p_{\sigma} (\IR^d))$ the unique mild solution $u$ to the Cauchy problem
\begin{align*}
\left\{ \begin{aligned}
 u^{\prime} (t) + A u (t) &= f (t), && t > 0, \\
 u (0) &= 0
\end{aligned} \right.
\end{align*}
satisfies $u(t) \in \dom(A)$ for almost every $t > 0$ and $u^{\prime} , A u \in \L^q (0 , \infty ; \L^p_{\sigma} (\IR^d))$ and there exists a constant $C > 0$ depending only on $d$, $\mu_{\bullet}$, $\mu^{\bullet}$, $p$, and $q$ such that
\begin{align*}
 \| u^{\prime} \|_{\L^q (0 , \infty ; \L^p_{\sigma})} + \| A u \|_{\L^q (0 , \infty ; \L^p_{\sigma})} \leq C \| f \|_{\L^q (0 , \infty ; \L^p_{\sigma})}.
\end{align*}
\end{theorem}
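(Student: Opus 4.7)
The approach is to invoke Weis' characterization of maximal $\L^q$-regularity in UMD Banach spaces: since $\L^p_\sigma (\IR^d)$ is a closed subspace of the UMD space $\L^p (\IR^d ; \IC^d)$, it is itself UMD for $1 < p < \infty$, and Weis' theorem reduces the claim to the $\cR$-sectoriality of the $\L^p_\sigma$-realization of $A$ with some angle strictly greater than $\pi / 2$. The case $p = 2$ is immediate, because on a Hilbert space uniform boundedness coincides with $\cR$-boundedness, and sectoriality of $A$ on $\L^2_\sigma$ was obtained via Kato's form method in the introduction. For general $p$ satisfying~\eqref{Eq: Lp interval}, the plan is to run the proof of Theorem~\ref{Thm: Resolvent} in an $\ell^2$-valued setting.

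On $\L^p$-spaces, $\cR$-boundedness of a family $(T_j)_{j \in \IN}$ is equivalent, via Kahane's inequality and the lattice structure of $\L^p$, to the square-function estimate
\begin{align*}
 \Big\| \Big( \sum_{j} \lvert T_j f_j \rvert^2 \Big)^{1/2} \Big\|_{\L^p} \leq C \Big\| \Big( \sum_j \lvert f_j \rvert^2 \Big)^{1/2} \Big\|_{\L^p}.
\end{align*}
It therefore suffices to establish, for finite families $(\lambda_j) \subset \S_\theta$ with $\theta \in (\pi / 2 , \omega)$ and $(f_j) \subset \L^2_\sigma \cap \L^p_\sigma$ with $u_j = (\lambda_j + A)^{-1} f_j$, the analogue of Theorem~\ref{Thm: Resolvent} in which the scalar moduli are replaced by $\ell^2$-norms in $j$. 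The crucial observation is that Theorem~\ref{Thm: Non-local Caccioppoli} is quadratic in $u$ and $f$ once the constants $c_k$ are chosen as local means of $u$ on $B(x_0 , 2^k r)$; summing in $j$ then yields exactly the same non-local Caccioppoli inequality, but with $( \sum_j \lvert u_j \rvert^2 )^{1/2}$, $( \sum_j \lvert f_j \rvert^2 )^{1/2}$, $( \sum_j \lvert F_j \rvert^2 )^{1/2}$ in place of $\lvert u \rvert$, $\lvert f \rvert$, $\lvert F \rvert$.

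One then applies the $\ell^2$-valued version of the Shen-type non-local extrapolation theorem of~\cite{Tolksdorf_nonlocal}. This vectorial extrapolation is valid because Shen's argument is built purely on the lattice structure of $\L^p$ and therefore goes through verbatim for $\L^p (\IR^d ; \ell^2)$; alternatively one can invoke the general results of Blunck and Kunstmann relating weak reverse H\"older-type bounds to $\cR$-boundedness. This yields the square-function analogue of Theorem~\ref{Thm: Resolvent} and hence $\cR$-sectoriality of $A$ on $\L^p_\sigma (\IR^d)$ with the same angle $\omega > \pi / 2$ as in Theorem~\ref{Thm: Resolvent}. Weis' theorem then delivers maximal $\L^q$-regularity for every $1 < q < \infty$, and the displayed estimate follows from the standard Duhamel representation $u(t) = \int_0^t \e^{- (t - s) A} f(s) \, \d s$ of the mild solution.

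The main obstacle I anticipate is the clean vectorization of the pressure-induced linear term $\lvert \lambda \rvert \sum_k \lvert c_k \rvert 2^{- \nu k} \int_{B(x_0 , 2^{k + 1} r)} \lvert u \rvert \, \d x$ in Theorem~\ref{Thm: Non-local Caccioppoli}: with the choice $c_k^{(j)}$ equal to the mean of $u_j$ over $B(x_0 , 2^k r)$, one has to apply Cauchy--Schwarz carefully in both $j$ and $k$ to transform this contribution into a genuinely quadratic expression in $(u_j)$ that can be absorbed by the left-hand side of the vector-valued inequality. Apart from this bookkeeping, the rest of the argument is a mechanical repetition of the scalar proof and introduces no new range restriction on $p$.
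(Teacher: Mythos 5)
Your overall scaffolding is correct and mirrors the paper: Weis' theorem (Theorem~\ref{Thm: Weis}) reduces the claim to $\cR$-sectoriality, which on $\L^p_\sigma$ is equivalent to a square-function estimate (Remark~\ref{Rem: R-boundedness}), and this in turn is attacked through an $\ell^2(\IC^d)$-valued version of the non-local Caccioppoli inequality fed into an extrapolation theorem, with the case $p<2$ handled by Kalton--Weis duality. But there is a genuine gap in the middle of the argument that your proposal glosses over.

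You propose to apply the non-local extrapolation theorem of~\cite{Tolksdorf_nonlocal}, i.e.\ Theorem~\ref{Thm: Non-local, vector valued Shen} of the paper, in its $\ell^2$-valued form. The difficulty is not the vectorization (that is already available) but the \emph{solenoidality constraint}. That theorem asks you to verify a weak reverse H\"older inequality~\eqref{Eq: Weak reverse Hoelder inequality} for right-hand sides of the form $f\chi_{\IR^d\setminus\iota B}$; its proof decomposes $Tf$ as $Tf = T(f\chi_{2\iota Q^*}) + T(f\chi_{\IR^d\setminus 2\iota Q^*})$. But if you truncate a solenoidal $f$ by a characteristic function, it stops being divergence-free. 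Since $(\lambda+A)^{-1}$ is only defined on $\L^2_\sigma$, you would have to work with $(\lambda+A)^{-1}\IP$, yet Theorem~\ref{Thm: Non-local Caccioppoli} requires a solenoidal right-hand side $f \in \L^2_\sigma$ (this enters crucially in the proof of Lemma~\ref{Lem: Non-local pressure estimate}, where $f\perp\L^2_\sigma(\IR^d)^\perp$ kills the $f$-term). Thus the non-local Caccioppoli inequality simply cannot be applied to the truncated $f$, and the weak reverse H\"older inequality required by Theorem~\ref{Thm: Non-local, vector valued Shen} is not available. The whole of Section~\ref{Sec: A glimpse onto a non-local Lp-extrapolation theorem} exists precisely to circumvent this: the paper formulates a modified good-$\lambda$ extrapolation result (Theorem~\ref{Thm: Modified Shen whole space}) that replaces the truncation of $f$ by an \emph{arbitrary} decomposition $Tf = T_1 + T_2$, and then constructs a suitable decomposition $u = u_1 + u_2$ via a local Stokes problem with Neumann-type boundary conditions on $B(x_0,r_0)$ (Lemma~\ref{Lem: Preparation of reverse Holder}), including a non-trivial treatment of the associated local pressure~$\phi_1$ through~\eqref{Eq: Resolvent Neumann}--\eqref{Eq: Pressure Neumann}. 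This construction, together with Lemma~\ref{Lem: Caccioppoli variable pressure} (a Caccioppoli inequality in which the pressure term is left in a product form so that one can decide case by case how to absorb it), is the actual heart of the proof and is absent from your proposal.

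As a side remark, the ``bookkeeping'' obstacle you flag concerning the linear term $\lvert\lambda\rvert\sum_k\lvert c_k\rvert 2^{-\nu k}\int\lvert u\rvert$ does not in fact arise in the paper's proof: the non-local Caccioppoli inequality is invoked in Lemma~\ref{Lem: Preparation of reverse Holder} with $c_\ell = 0$, so the inequality is already quadratic and sums cleanly in $j$. The real obstacle is the one above, and it is not a ``mechanical repetition'' of the scalar proof but requires a new, purpose-built formulation of the extrapolation theorem and a non-trivial local decomposition construction.
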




We close this introduction by stating some standard notation. Throughout, the space dimension $d$ is assumed to satisfy $d \geq 2$. The natural numbers $\IN$ are given by $\{1 , 2 , \dots \}$ and $\IN_0 := \IN \cup \{ 0 \}$. For a ball $B = B(x_0 , r)$ and some number $\alpha > 0$ we denote by $\alpha B$ the dilated ball $B(x_0 , \alpha r)$. Constants $C > 0$ will be generic and might change its values from line to line. We add subscripts, e.g., $C_d$, $C_{\mu^{\bullet}}$ to indicate the dependence of $C$ of certain quantities. The mean value of a locally integrable function $f$ on a bounded measurable set $\cA$ with $\lvert \cA \rvert > 0$ is denoted by
\begin{align*}
 f_{\cA} := \fint_{\cA} f \, \d x := \frac{1}{\lvert \cA \rvert} \int_{\cA} f \, \d x.
\end{align*}
For the rest of this work, we agree on summing over repeated indices.


\section{Proof of the non-local Caccioppoli inequality}

\noindent Let $B \subset \IR^d$ denote a ball centered in $x_0 \in \IR^d$ with radius $r > 0$. If $u \in \H^1 (2 B)$ is harmonic, then the classical Caccioppoli inequality for $u$ reads as
\begin{align}
\label{Eq: Classical Caccioppoli}
 \int_B \lvert \nabla u \rvert^2 \, \d x \leq \frac{C}{r^2} \int_{2 B} \lvert u \rvert^2 \, \d x,
\end{align}
where $C > 0$ denotes a dimensional constant. Its proof is very simple as it follows after three lines of calculation after testing the equation $- \Delta u = 0$ in $2 B$ by the test function $\eta^2 u$, where $\eta \in \C_c^{\infty} (2 B)$ satisfies $0 \leq \eta \leq 1$, $\eta \equiv 1$ in $B$, and $\| \nabla \eta \|_{\L^{\infty}} \leq 2 / r$. It is well-known that this inequality can be generalized to solutions to elliptic systems in divergence form with bounded measurable coefficients. One can even go further and consider weak solutions $u \in \H^1 (2 B)$ to the equation $\lambda u - \divergence \mu \nabla u = f$ for $\lambda \in \S_{\theta}$. Testing with the same test function as above then delivers the inequality
\begin{align}
\label{Eq: Elliptic Caccioppoli}
 \lvert \lambda \rvert \int_B \lvert u \rvert^2 \, \d x + \int_B \lvert \nabla u \rvert^2 \, \d x \leq \frac{C}{r^2} \int_{2 B} \lvert u \rvert^2 \, \d x + \frac{C}{\lvert \lambda \rvert} \int_{2 B} \lvert f \rvert^2 \, \d x.
\end{align}
Here, the constant $C > 0$ depends on $d$, $\theta$, and the ellipticity and boundedness constants of the coefficients. Clearly, $\theta$ has to be chosen appropriately depending on the ellipticity and boundedness constants of the coefficients. \par
When the Stokes problem with, say, continuous coefficients is concerned, the validity of the classical Caccioppoli inequality~\eqref{Eq: Classical Caccioppoli} was proven by Giaquinta and Modica in~\cite[Thm.~1.1]{Giaquinta_Modica}. The proof bases again on testing the equation by $\eta^2 u$ as above. However, as the test function is not divergence free, the pressure will appear and needs to be handled in the estimates. With a similar argument, Choe and Kozono~\cite{Choe_Kozono} established the Caccioppoli inequality for the Stokes resolvent problem with coefficient matrix $\mu = \Id$ and resolvent parameter $\lambda \in \ii \IR$. In this case, the Caccioppoli inequality reads
\begin{align}
\label{Eq: Odd Stokes Caccioppoli}
 \lvert \lambda \rvert \int_B \lvert u \rvert^2 \, \d x + \int_B \lvert \nabla u \rvert^2 \, \d x \leq \frac{C (1 + \lvert \lambda \rvert r^2)}{r^2} \int_{2 B} \lvert u \rvert^2 \, \d x + \frac{C}{\lvert \lambda \rvert} \int_{2 B} \lvert f \rvert^2 \, \d x.
\end{align}
If one compares the elliptic estimate~\eqref{Eq: Elliptic Caccioppoli} with the estimate for the Stokes resolvent~\eqref{Eq: Odd Stokes Caccioppoli}, one readily sees the difference in the prefactor in front of the $\L^2$-integral of $u$ on the right-hand side. The additional term involving the term $\lvert \lambda \rvert r^2$ results from the treatment of the arising pressure term. Unfortunately, for some purposes it is important that the constant in front of the $\L^2$-integral of $u$ on the right-hand side is \textit{independent} of $\lambda$ and it would be desirable if the constant would simply be $C / r^2$ as in the elliptic case~\eqref{Eq: Elliptic Caccioppoli}. As the pressure reflects to a great extend the \textit{non-local} behavior of the solution, it is, however, not very surprising that something odd happens if the non-local term is ``pressed'' into a local estimate. The goal of this section is take the opposite viewpoint, namely, to prove a non-local counterpart of the Caccioppoli inequality and to recover the prefactor $C / r^2$ in front of the $\L^2$-integrals of $u$ on the right-hand side. The precise result we prove is formulated in Theorem~\ref{Thm: Non-local Caccioppoli}.

To prepare the arguments we first introduce some technical tools. First of all, recall that the Helmholtz projection $\IP$ is given on the whole space by
\begin{align*}
 \IP f = \cF^{-1} \Big[ \Id - \frac{\xi \otimes \xi}{\lvert \xi \rvert^2} \Big] \cF f \qquad \Leftrightarrow \qquad (\Id - \IP) f = \cF^{-1} \frac{\xi \otimes \xi}{\lvert \xi \rvert^2} \cF f.
\end{align*}
Here, $\cF$ denotes the Fourier transform, $f$ denotes an element in $\L^2 (\IR^d ; \IC^d)$, and $\xi \otimes \xi := \xi \xi^{\top}$. Notice that $\IP$ is the orthogonal projection onto $\L^2_{\sigma} (\IR^d)$. In particular, for $f \in \H^1 (\IR^d ; \IC^d)$ one has
\begin{align*}
 \divergence((\Id - \IP) f) = \divergence(f).
\end{align*}
Recall further that $\IP$ and $\Id - \IP$ commute with derivatives whenever the function $f$ is regular enough. Finally, since $\xi \mapsto \frac{\xi \otimes \xi}{\lvert \xi \rvert^2}$ is a Mikhlin symbol, by~\cite[Prop.~VI.4.2]{Stein} there exists a kernel function $k : \IR^d \setminus \{0\} \to \IR^d$ such that
\begin{align}
\label{Eq: Helmholtz kernel 1}
 [(\Id - \IP) f] (x) = \int_{\IR^d} k (x - y) f (y) \, \d y \qquad (f \in \L^2 (\IR^d ; \IC^d) , \, x \notin \supp (f))
\end{align}
and such that there exists $C_d > 0$, depending only on $d$, such that
\begin{align}
\label{Eq: Helmholtz kernel 2}
 \lvert \partial^{\alpha} k (x) \rvert \leq \frac{C_d}{\lvert x \rvert^{d + \lvert \alpha \rvert}} \qquad (x \in \IR^d \setminus \{ 0 \} \text{ and } \alpha \in \IN_0^d \text{ with } \lvert \alpha \rvert \leq 1).
\end{align}
\indent To proceed, let $\L^2_0 (\cA) := \{ f \in \L^2 (\cA) : f_{\cA} = 0 \}$. Let $\cC_1$ denote the annulus $B(0 , 1) \setminus \overline{B(0 , 1 / 2)}$. The Bogovski\u{\i} operator $\cB_1 : \L^2_0 (\cC_1) \to \H^1_0 (\cC_1 ; \IC^d)$ denotes the solution operator to the divergence equation for functions $f \in \L^2_0 (\cC_1)$
\begin{align*}
\left\{ \begin{aligned}
 \divergence(u) &= f && \text{in } \cC_1, \\
 u &= 0 && \text{on } \partial \cC_1.
\end{aligned} \right.
\end{align*}
Thus, we have $\divergence(\cB_1 f) = f$. Furthermore, $\cB_1$ is a bounded operator from $\L^2_0 (\cC_1)$ onto $\H^1_0 (\cC_1 ; \IC^d)$, i.e., there exists a constant $C_{Bog} > 0$ such that
\begin{align*}
 \| \cB_1 f \|_{\H^1 (\cC_1)} \leq C_{Bog} \| f \|_{\L^2 (\cC_1)} \qquad (f \in \L^2_0 (\cC_1)).
\end{align*}
See, e.g., Galdi~\cite[Sect.~III.3]{Galdi} for a construction of this operator. Now, if $\cC_{\alpha}$ denotes the annulus $B(0 , \alpha) \setminus \overline{B(0 , \alpha / 2)}$ for some $\alpha > 0$ and if $f \in \L^2_0 (\cC_{\alpha})$ the rescaled function $f_{\alpha} (x) := \alpha f (\alpha x)$ lies in $\L^2_0 (\cC_1)$. Define the rescaled Bogovski\u{\i} operator on $\cC_{\alpha}$ as
\begin{align*}
 [\cB_{\alpha} f] (x) := [\cB_1 f_{\alpha}] (\alpha^{-1} x) \qquad (f \in \L^2_0 (\cC_{\alpha}) , \, x \in \cC_{\alpha}).
\end{align*}
Clearly, $\cB_{\alpha}$ is bounded from $\L^2_0 (\cC_{\alpha})$ onto $\H^1_0 (\cC_{\alpha} ; \IC^d)$ and satisfies $\divergence \cB_{\alpha} f = f$. Furthermore, by rescaling, the following inequalities holds
\begin{align}
\label{Eq: Homogeneous estimate Bogovskii}
 \| \nabla \cB_{\alpha} f \|_{\L^2 (\cC_{\alpha})} \leq C_{Bog} \| f \|_{\L^2 (\cC_{\alpha})} \qquad (f \in \L^2_0(\cC_{\alpha}))
\end{align}
and
\begin{align}
\label{Eq: Inhomogeneous estimate Bogovskii}
 \| \cB_{\alpha} f \|_{\L^2 (\cC_{\alpha})} \leq \alpha C_{Bog} \| f \|_{\L^2 (\cC_{\alpha})} \qquad (f \in \L^2_0(\cC_{\alpha})).
\end{align}
Finally, let use mention that - with a slight abuse of notation - we will denote the Bogovski\u{\i} operator on annuli not centered in the origin, i.e., on $B (x_0 , \alpha) \setminus \overline{B (x_0 , \alpha / 2)}$ by $\cB_{\alpha}$ as well and notice that~\eqref{Eq: Homogeneous estimate Bogovskii} and~\eqref{Eq: Inhomogeneous estimate Bogovskii} hold with the same constant. \par
We are in the position to prove a lemma on non-local pressure estimates.

\begin{lemma}
\label{Lem: Non-local pressure estimate}
Let the coefficients $\mu$ satisfy~\eqref{Eq: Ellipticity} and~\eqref{Eq: Boundedness} with constants $\mu^{\bullet} , \mu_{\bullet} > 0$. Let $\lambda \in \IC$ and let for $f \in \L^2_{\sigma} (\IR^d)$ and $F \in \L^2 (\IR^d ; \IC^{d \times d})$ the functions $u \in \H^1_{\sigma} (\IR^d)$ and $\phi \in \L^2_{\loc} (\IR^d)$ solve
\begin{align*}
 \left\{ \begin{aligned}
  \lambda u - \divergence \mu \nabla u + \nabla \phi &= f + \divergence(F) && \text{in } \IR^d, \\
  \divergence (u) &= 0 && \text{in } \IR^d
 \end{aligned} \right.
\end{align*}
in the sense of distributions. Let $x_0 \in \IR^d$ and $r > 0$ and define for $k \in \IN$ the annulus $\cC_k := B(x_0 , 2^k r) \setminus \overline{B(x_0 , 2^{k - 1} r)}$. Let $\cC_0$ denote the ball $B(x_0 , r)$. Then there exists a constant $C > 0$ depending only on $\mu^{\bullet}$ and $d$ such that for all $k \in \IN$ we have
\begin{align*}
 &\bigg(\int_{\cC_k} \lvert \phi - \phi_{\cC_k} \rvert^2 \, \d x \bigg)^{\frac{1}{2}} \\
 &\qquad\leq C \bigg( \sum_{\ell = 0}^{k - 2} 2^{\frac{d}{2} (\ell - k)} \big( \| \nabla u \|_{\L^2 (\cC_{\ell})} + \| F \|_{\L^2 (\cC_{\ell})} \big) + \sum_{\substack{\ell \in \IN_0 \\ \lvert \ell - k \rvert \leq 1}} \big( \| \nabla u \|_{\L^2 (\cC_{\ell})} + \| F \|_{\L^2 (\cC_{\ell})} \big) \\
 &\qquad\qquad + \sum_{\ell = k + 2}^{\infty} 2^{(\frac{d}{2} + 1) (k - \ell)} \big( \| \nabla u \|_{\L^2 (\cC_{\ell})} + \| F \|_{\L^2 (\cC_{\ell})} \big) \bigg).
\end{align*}
\end{lemma}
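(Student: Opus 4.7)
The overall plan is to leverage the fact that, since both $u$ and $f$ are divergence-free, applying the Helmholtz projection $\IP$ to the equation yields the identity
\[
 \nabla \phi = (\Id - \IP) \divergence(G), \qquad G := \mu \nabla u + F,
\]
so that the pressure is determined entirely by $G$; the quantities $\lambda$, $u$, and $f$ drop out. This already explains at once why the right-hand side of the claimed bound involves only $\|\nabla u\|_{\L^2}$ and $\|F\|_{\L^2}$ (and has no $\mu_{\bullet}$- or $\lambda$-dependence).

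I would then argue by $\L^2$-duality on the annulus. Since $\phi - \phi_{\cC_k}$ has zero mean on $\cC_k$,
\[
 \| \phi - \phi_{\cC_k} \|_{\L^2(\cC_k)} = \sup \Big\{ \Big|\int_{\cC_k} \phi \overline{g} \, \d x \Big| : g \in \L^2_0(\cC_k), \, \|g\|_{\L^2} \leq 1 \Big\}.
\]
For such $g$, I set $v := \cB_{2^k r} g$ extended by zero, so that $v \in \H^1_0(\cC_k ; \IC^d)$ and $\divergence(v) = g$. One integration by parts, the Helmholtz representation of $\nabla \phi$, the self-adjointness of $\Id - \IP$, and the Leray decomposition $(\Id - \IP) v = \nabla p$ with $p = \Delta^{-1} g$ (the Newtonian potential of $g$) combine to give
\[
 \int_{\cC_k} \phi \overline{g} \, \d x = - \int_{\IR^d} \nabla \phi \cdot \overline{v} \, \d x = - \int_{\IR^d} \divergence(G) \cdot \overline{(\Id - \IP) v} \, \d x = \int_{\IR^d} G : \overline{\nabla^2 p} \, \d x.
\]
Splitting this last integral across the annuli $\cC_\ell$ and applying Cauchy--Schwarz then yields
\[
 \Big|\int_{\cC_k} \phi \overline{g} \, \d x \Big| \leq \sum_{\ell = 0}^\infty \|G\|_{\L^2(\cC_\ell)} \|\nabla^2 p\|_{\L^2(\cC_\ell)}.
\]

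The remaining work is to bound $\|\nabla^2 p\|_{\L^2(\cC_\ell)}$ in the three regimes in which the lemma decomposes the sum. Using the representation $p(x) = \int_{\cC_k} \Gamma(x - y) g(y) \, \d y$ with $\Gamma$ the Newtonian kernel (so that $|\partial^\alpha \Gamma(x)| \lesssim |x|^{-(d-2+|\alpha|)}$) and the zero-mean of $g$: for $|\ell - k| \leq 1$ the $\L^2$-boundedness of the Caldero\'n--Zygmund operator $\nabla^2 \Delta^{-1}$ yields $\|\nabla^2 p\|_{\L^2(\cC_\ell)} \leq \|\nabla^2 p\|_{\L^2(\IR^d)} \lesssim \|g\|_{\L^2} \leq 1$. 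For $\ell \leq k - 2$ the distance $|x - y|$ is comparable to $2^k r$ for $x \in \cC_\ell$ and $y \in \cC_k$, so the direct kernel bound $|\nabla^2 \Gamma(x - y)| \lesssim (2^k r)^{-d}$ together with $\|g\|_{\L^1(\cC_k)} \lesssim (2^k r)^{d/2}$ produces the decay $2^{d(\ell - k)/2}$ after integration over $\cC_\ell$. For $\ell \geq k + 2$, the zero mean of $g$ is crucial: we subtract $\nabla^2 \Gamma(x - x_0)$ inside the integrand, and the mean-value estimate $|\nabla^2 \Gamma(x - y) - \nabla^2 \Gamma(x - x_0)| \lesssim |y - x_0| / |x - x_0|^{d + 1} \lesssim 2^k r / (2^\ell r)^{d + 1}$ yields the decay $2^{(d/2 + 1)(k - \ell)}$.

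Plugging these three bounds back and using $\|G\|_{\L^2(\cC_\ell)} \leq C_d \mu^{\bullet} \|\nabla u\|_{\L^2(\cC_\ell)} + \|F\|_{\L^2(\cC_\ell)}$ recovers the statement. The main obstacle is the far-outside regime $\ell \geq k + 2$: without exploiting $\int g = 0$ one would only obtain the Caldero\'n--Zygmund rate $|x - y|^{-d}$, giving the much slower decay $2^{d(k - \ell)/2}$, which would be insufficient to close the non-local Caccioppoli iteration in Theorem~\ref{Thm: Non-local Caccioppoli}. A minor technical point is justifying the second integration by parts, since $\divergence(G)$ is only an $\H^{-1}$-distribution while $\nabla p$ is not compactly supported; this is handled via the decay $|\nabla p(x)| \lesssim |x|^{-d}$ at infinity (guaranteed by $\int g = 0$), so that $\nabla p \in \H^1(\IR^d)$ and the $\H^{-1}$-$\H^1$ pairing is well-defined.
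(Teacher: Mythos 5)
Your proof is correct and reaches the same estimate, but the route is genuinely different in two places, both of which streamline the argument a little. First, rather than testing the weak form of the system with the fixed test function $(\Id - \IP)\,\cB_{\cC_k}\bigl((\phi - \phi_{\cC_k})|_{\cC_k}\bigr)$ and reading off $\|\phi - \phi_{\cC_k}\|_{\L^2(\cC_k)}^2$ on the left, you apply $\Id - \IP$ to the distributional equation to obtain $\nabla\phi = (\Id-\IP)\divergence(G)$ outright, and then argue by $\L^2$-duality with an arbitrary mean-zero $g$. The two formulations are equivalent in substance — the paper's test function \emph{is} the specific $g = (\phi - \phi_{\cC_k})|_{\cC_k}$ normalized to unit norm, and the observation that $\lambda u$ and $f$ drop out is exactly the paper's remark that $v_k \perp \L^2_\sigma$ — but your version makes the independence of the pressure from $\lambda$, $u$, $f$ transparent at the outset. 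Second, and more substantively, in the far-exterior regime $\ell \geq k+2$ the paper differentiates the Helmholtz kernel (getting $|x-y|^{-(d+1)}$) and pays for it with the undifferentiated Bogovski\u{\i} function, invoking the \emph{inhomogeneous} rescaled bound $\|\cB_{\cC_k} g\|_{\L^2} \lesssim 2^k r\,\|g\|_{\L^2}$ to produce the extra factor $2^k r$. You instead never touch the Bogovski\u{\i} function quantitatively (you only use that a right inverse of $\divergence$ exists to justify the integration by parts), and you extract the extra factor $2^k r$ from the mean-zero cancellation $\int g = 0$ via a first-order Taylor expansion of $\nabla^2 \Gamma$ around $x - x_0$. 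Both give the decay rate $2^{(d/2+1)(k-\ell)}$, and your remark that the naive Calder\'on--Zygmund rate $2^{d(k-\ell)/2}$ would not suffice to close the iteration in Theorem~\ref{Thm: Non-local Caccioppoli} for $\nu$ close to $d+2$ is exactly the reason the paper works with the finer rate. Your variant trades the rescaled Bogovski\u{\i} estimates~\eqref{Eq: Homogeneous estimate Bogovskii}--\eqref{Eq: Inhomogeneous estimate Bogovskii} for elementary kernel estimates on the Newtonian potential, which is arguably more self-contained. The one technical point you correctly flag — justifying the pairing of $\divergence(G) \in \H^{-1}$ against $\nabla p$, which is not compactly supported — is handled, as you say, by the decay $|\nabla p(x)| \lesssim |x|^{-d}$ forced by $\int g = 0$; alternatively, one can observe directly that $\nabla p = (\Id - \IP)v \in \H^1(\IR^d)$ by the $\L^2$-boundedness of $\Id - \IP$ and the commutation of $\Id - \IP$ with gradients.
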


\begin{proof}
Let $k \in \IN$ and let $\cB_{\cC_k} := \cB_{2^k r}$ denote the Bogovski\u{\i} operator on $\cC_k$. By an extension by zero, we view the function $\cB_{\cC_k} ((\phi - \phi_{\cC_k})|_{\cC_k})$ as a function in $\H^1(\IR^d ; \IC^d)$ whose support is contained in $\overline{\cC_k}$. Define the test function
\begin{align*}
 v_k := (\Id - \IP) \cB_{\cC_k} ((\phi - \phi_{\cC_k})|_{\cC_k})
\end{align*}
which is the sum of a function in $\H^1 (\IR^d ; \IC^d)$ with support in $\overline{\cC_k}$ and of a function in $\H^1_{\sigma} (\IR^d)$. Since $u \in \H^1_{\sigma} (\IR^d)$ and $\phi \in \L^2_{\loc} (\IR^d)$ an approximation argument allows to test the resolvent equation by this test function. Because $v_k \in \L^2_{\sigma} (\IR^d)^{\perp}$ and $f \in \L^2_{\sigma} (\IR^d)$ this yields
\begin{align*}
 \int_{\IR^d} \phi \, \divergence ((\Id - \IP) \cB_{\cC_k} ((\overline{\phi - \phi_{\cC_k}})|_{\cC_k})) \, \d x &= \int_{\IR^d} \mu^{i j}_{\alpha \beta} \partial_{\alpha} u_j \partial_{\beta} [(\Id - \IP) \cB_{\cC_k} ((\overline{\phi - \phi_{\cC_k}})|_{\cC_k})]_i \, \d x \\
 &\qquad + \int_{\IR^d} F_{\alpha \beta} \partial_{\alpha} [(\Id - \IP) \cB_{\cC_k} ((\overline{\phi - \phi_{\cC_k}})|_{\cC_k})]_{\beta} \, \d x.
\end{align*}
Notice that we can subtract an arbitrary constant from the left-most pressure since integration is only performed on $\cC_k$ (recall that the divergence applied to $\IP$ is zero and that the support of $\cB_{\cC_k} ((\phi - \phi_{\cC_k})|_{\cC_k})$ is contained in $\overline{\cC_k}$). By our technical preparation in front of this lemma, we derive the validity of the identity
\begin{align*}
 \int_{\cC_k} \lvert \phi - \phi_{\cC_k} \rvert^2 \, \d x &= \int_{\IR^d} \mu^{i j}_{\alpha \beta} \partial_{\alpha} u_j [(\Id - \IP) \partial_{\beta} \cB_{\cC_k} ((\overline{\phi - \phi_{\cC_k}})|_{\cC_k})]_i \, \d x \\
 &\qquad + \int_{\IR^d} F_{\alpha \beta} [(\Id - \IP) \partial_{\alpha} \cB_{\cC_k} ((\overline{\phi - \phi_{\cC_k}})|_{\cC_k})]_{\beta} \, \d x.
\end{align*}
With a constant $C_{d , \mu^{\bullet}} > 0$ depending only on $d$ and $\mu^{\bullet}$, we then find 
\begin{align*}
 \int_{\cC_k} \lvert \phi - \phi_{\cC_k} \rvert^2 \, \d x &\leq C_{d , \mu^{\bullet}} \sum_{\ell = 0}^{\infty} \big( \| \nabla u \|_{\L^2 (\cC_{\ell})} + \| F \|_{\L^2 (\cC_{\ell})} \big) \sum_{\beta = 1}^d \| (\Id - \IP) \partial_{\beta} \cB_{\cC_k} ((\overline{\phi - \phi_{\cC_k}})|_{\cC_k}) \|_{\L^2 (\cC_{\ell})}.
\end{align*}
We proceed as follows: if $\dist(\cC_{\ell} , \cC_k) = 0$ we use the fact that $\Id - \IP$ is an orthogonal projection whose $\L^2$-operator norm is $1$. If $\dist(\cC_{\ell} , \cC_k) > 0$ we use the kernel representation of $\Id - \IP$ stated in~\eqref{Eq: Helmholtz kernel 1} and~\eqref{Eq: Helmholtz kernel 2}. In any case, we will conclude the estimate by employing either~\eqref{Eq: Homogeneous estimate Bogovskii} or~\eqref{Eq: Inhomogeneous estimate Bogovskii} depending on the particular situation. \par
If $\dist(\cC_{\ell} , \cC_k) > 0$, $k \geq 2$, and $\ell \leq k - 2$, then we there is a constant $C_d > 0$ such that
\begin{align*}
 \sum_{\beta = 1}^d \| (\Id - \IP) \partial_{\beta} \cB_{\cC_k} ((\overline{\phi - \phi_{\cC_k}})|_{\cC_k}) \|_{\L^2 (\cC_{\ell})} &\leq C_d \bigg( \int_{\cC_{\ell}} \bigg( \int_{\cC_k} \frac{\lvert \nabla \cB_{\cC_k} ((\overline{\phi - \phi_{\cC_k}})|_{\cC_k}) (y) \rvert}{\lvert x - y \rvert^d} \, \d y \bigg)^2 \, \d x \bigg)^{\frac{1}{2}} \\
 &\leq C_d \frac{\lvert \cC_{\ell} \rvert^{\frac{1}{2}} \lvert \cC_k \rvert^{\frac{1}{2}}}{\dist(\cC_{\ell} , \cC_k)^d} \| \nabla \cB_{\cC_k} ((\overline{\phi - \phi_{\cC_k}})|_{\cC_k}) \|_{\L^2 (\cC_k)} \\
 &\leq C_d C_{Bog} \frac{\lvert \cC_{\ell} \rvert^{\frac{1}{2}} \lvert \cC_k \rvert^{\frac{1}{2}}}{\dist(\cC_{\ell} , \cC_k)^d} \| \phi - \phi_{\cC_k} \|_{\L^2 (\cC_k)}.
\end{align*}
Similarly, if $\dist(\cC_{\ell} , \cC_k) > 0$, $k \geq 1$, and $\ell \geq k + 2$, then it holds that
\begin{align*}
 \sum_{\beta = 1}^d \| (\Id - \IP) \partial_{\beta} \cB_{\cC_k} ((\overline{\phi - \phi_{\cC_k}})|_{\cC_k}) \|_{\L^2 (\cC_{\ell})} &\leq C_d \bigg( \int_{\cC_{\ell}} \bigg( \int_{\cC_k} \frac{\lvert \cB_{\cC_k} ((\overline{\phi - \phi_{\cC_k}})|_{\cC_k}) (y) \rvert}{\lvert x - y \rvert^{d + 1}} \, \d y \bigg)^2 \, \d x \bigg)^{\frac{1}{2}} \\
 &\leq C_d \frac{\lvert \cC_{\ell} \rvert^{\frac{1}{2}} \lvert \cC_k \rvert^{\frac{1}{2}}}{\dist(\cC_{\ell} , \cC_k)^{d + 1}} \| \cB_{\cC_k} ((\overline{\phi - \phi_{\cC_k}})|_{\cC_k}) \|_{\L^2 (\cC_k)} \\
 &\leq C_d C_{Bog} \frac{\lvert \cC_{\ell} \rvert^{\frac{1}{2}} \lvert \cC_k \rvert^{\frac{1}{2}} 2^k r}{\dist(\cC_{\ell} , \cC_k)^{d + 1}} \| \phi - \phi_{\cC_k} \|_{\L^2 (\cC_k)}.
\end{align*}
Moreover, we find
\begin{align*}
 \dist(\cC_{\ell} , \cC_k) \geq 2^{k - 1} r - 2^{\ell} r \geq (2^{k - 1} - 2^{k - 2}) r = 2^{k - 2} r \quad \text{if} \quad k \geq 2 \quad \text{and} \quad \ell \leq k - 2
\end{align*}
and
\begin{align*}
 \dist(\cC_{\ell} , \cC_k) \geq 2^{\ell - 1} r - 2^k r = (2^{\ell - 1} - 2^{\ell - 2}) r = 2^{\ell - 2} r \quad \text{if} \quad  k \geq 1 \quad \text{and} \quad \ell \geq k + 2.
\end{align*}
Combining all the previous estimates delivers in the case $k \geq 2$ and $\ell \leq k - 2$ that
\begin{align*}
 \sum_{\beta = 1}^d \| (\Id - \IP) \partial_{\beta} \cB_{\cC_k} ((\overline{\phi - \phi_{\cC_k}})|_{\cC_k}) \|_{\L^2 (\cC_{\ell})} \leq C_d C_{Bog} 2^{\frac{d}{2} (\ell - k)} \| \phi - \phi_{\cC_k} \|_{\L^2 (\cC_k)}
\end{align*}
and in the case $k \geq 1$ and $\ell \geq k + 2$ that
\begin{align*}
 \sum_{\beta = 1}^d \| (\Id - \IP) \partial_{\beta} \cB_{\cC_k} ((\overline{\phi - \phi_{\cC_k}})|_{\cC_k}) \|_{\L^2 (\cC_{\ell})} \leq C_d C_{Bog} 2^{(\frac{d}{2} + 1)(k - \ell)} \| \phi - \phi_{\cC_k} \|_{\L^2 (\cC_k)}.
\end{align*}
As $C_{Bog}$ also only depends on $d$, we altogether get a constant $C_{d , \mu^{\bullet}} > 0$ that depends only on $d$ and $\mu^{\bullet}$ such that
\begin{align*}
 &\int_{\cC_k} \lvert \phi - \phi_{\cC_k} \rvert^2 \, \d x \\
 &\qquad\leq C_{d , \mu^{\bullet}} \bigg( \sum_{\ell = 0}^{k - 2} 2^{\frac{d}{2} (\ell - k)} \big( \| \nabla u \|_{\L^2 (\cC_{\ell})} + \| F \|_{\L^2 (\cC_{\ell})} \big) + \sum_{\substack{ \ell \in \IN_0 \\ \lvert \ell - k \rvert \leq 1}} \big( \| \nabla u \|_{\L^2 (\cC_{\ell})} + \| F \|_{\L^2 (\cC_{\ell})} \big) \\
 &\qquad\qquad + \sum_{\ell = k + 2}^{\infty} 2^{(\frac{d}{2} + 1)(k - \ell)} \big( \| \nabla u \|_{\L^2 (\cC_{\ell})} + \| F \|_{\L^2 (\cC_{\ell})} \big) \bigg) \| \phi - \phi_{\cC_k} \|_{\L^2 (\cC_k)}.
\end{align*}
Division by $\| \phi - \phi_{\cC_k} \|_{\L^2 (\cC_k)}$ finally delivers the desired estimate.
\end{proof}

The last lemma gave a control of the pressure by the gradient of $u$ and some parts of the right-hand side of the resolvent equation. The next lemma will contain the standard proof of Caccioppoli's inequality and will provide an estimate of $\lvert \lambda \rvert^{1 / 2} u$ and the gradient of $u$ by $u$, $\lvert \lambda \rvert^{- 1 / 2} f$, and $F$ and also by an arbitrary small pressure term. As in Lemma~\ref{Lem: Non-local pressure estimate} we adopt the notation $\cC_k := B(x_0 , 2^k r) \setminus \overline{B(x_0 , 2^{k - 1} r)}$ for $k \in \IN$.

\begin{lemma}
\label{Lem: Caccioppoli with delta-pressure}
Let $\mu$ satisfy Assumption~\ref{Ass: Coefficients} with constants $\mu^{\bullet} , \mu_{\bullet} > 0$. Then there exists $\omega \in (\pi / 2 , \pi)$ such that for all $\theta \in (0 , \omega)$ there exists $C > 0$ such that for all $\lambda \in \S_{\theta}$, $\delta > 0$, $c \in \IC^d$, $f \in \L^2_{\sigma} (\IR^d)$, and $F \in \L^2 (\IR^d ; \IC^{d \times d})$ the unique solutions $u \in \H^1_{\sigma} (\IR^d)$ and $\phi \in \L^2_{\loc} (\IR^d)$ to
\begin{align*}
 \left\{ \begin{aligned}
  \lambda u - \divergence \mu \nabla u + \nabla \phi &= f + \divergence(F) && \text{in } \IR^d, \\
  \divergence (u) &= 0 && \text{in } \IR^d
 \end{aligned} \right.
\end{align*}
satisfy
\begin{align*}
 &\lvert \lambda \rvert \int_{B(x_0 , 2^k r)} \lvert u \rvert^2 \, \d x + \int_{B(x_0 , 2^k r)} \lvert \nabla u \rvert^2 \, \d x \\
 &\qquad \qquad \leq \delta \int_{\cC_{k + 1}} \lvert \phi - \phi_{\cC_{k + 1}} \rvert^2 \, \d x + C \Big( 1 + \frac{1}{\delta} \Big) \frac{2^{- 2 k}}{r^2} \int_{B(x_0 , 2^{k + 1} r)} \lvert u + c \rvert^2 \, \d x \\
 &\qquad \qquad \qquad + \lvert \lambda \rvert \lvert c \rvert C \int_{B(x_0 , 2^{k + 1} r)} \lvert u \rvert \, \d x + \frac{C}{\lvert \lambda \rvert} \int_{B(x_0 , 2^{k + 1} r)} \lvert f \rvert^2 \, \d x + C \int_{B(x_0 , 2^{k + 1} r)} \lvert F \rvert^2 \, \d x.
\end{align*}
The constant $\omega$ depends only on $\mu_{\bullet}$, $\mu^{\bullet}$, and $d$ and $C > 0$ depends only on $\mu_{\bullet}$, $\mu^{\bullet}$, $d$, and $\theta$.
\end{lemma}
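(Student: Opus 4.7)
The plan is to test the weak form of the Stokes resolvent problem with the non-solenoidal test function $v := \eta^2 (u + c)$, where $\eta \in \C_c^{\infty}(B(x_0, 2^{k+1} r))$ satisfies $0 \leq \eta \leq 1$, $\eta \equiv 1$ on $B(x_0, 2^k r)$, and $\| \nabla \eta \|_{\L^{\infty}} \leq C (2^k r)^{-1}$. Because $v$ is not divergence-free, the pressure enters through the term $\int_{\IR^d} \phi \, \overline{\divergence(v)} \, \d x$, and the central observation is that, using $\divergence(u) = 0$ together with the fact that $c$ is constant,
\[
 \divergence(v) = 2 \eta \nabla \eta \cdot (u + c), \qquad \int_{\IR^d} \divergence(v) \, \d x = 0.
\]
The mean-zero property permits the substitution of $\phi$ by $\phi - \phi_{\cC_{k+1}}$ in the pressure pairing, after which Cauchy--Schwarz and Young's inequality produce the two leading right-hand side contributions $\delta \| \phi - \phi_{\cC_{k+1}} \|_{\L^2(\cC_{k+1})}^2$ and $C \delta^{-1} \|\nabla \eta\|_{\L^{\infty}}^2 \|u + c\|_{\L^2(B(x_0, 2^{k+1}r))}^2$.

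Next I extract the coercive left-hand side via a rotation argument based on the sectoriality of $\fa$. Since $\fa$ satisfies the G\r{a}rding inequality~\eqref{Eq: Ellipticity} and the boundedness bound~\eqref{Eq: Boundedness}, its numerical range lies in a sector of opening strictly less than $\pi/2$; hence there exists $\omega \in (\pi/2, \pi)$ depending only on $\mu_{\bullet}, \mu^{\bullet}, d$ such that for every $\lambda \in \S_{\theta}$ with $\theta < \omega$ one can choose a unimodular $\kappa = \kappa_{\lambda, \theta}$ with
\[
 \Re(\overline{\kappa} \lambda) \geq c_{\theta} |\lambda| \quad \text{and} \quad \Re(\overline{\kappa} \fa(w, w)) \geq c_{\theta} \|\nabla w\|_{\L^2}^2 \quad (w \in \H^1(\IR^d; \IC^d))
\]
simultaneously, for some $c_{\theta} > 0$. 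Multiplying the weak form by $\overline{\kappa}$, taking real parts, and using the identity $\fa(u, \eta^2 u) = \fa(\eta u, \eta u) + \text{(cross terms in $u\nabla\eta$)}$, where the cross terms are absorbed via Young into $\eps \|\eta \nabla u\|_{\L^2}^2 + C_{\eps} \|\nabla \eta\|_{\L^{\infty}}^2 \|u+c\|_{\L^2(B(x_0, 2^{k+1}r))}^2$, and applying G\r{a}rding to $\eta u$, I obtain on the left-hand side $c_{\theta} \bigl(|\lambda| \|\eta u\|_{\L^2}^2 + \mu_{\bullet} \|\eta \nabla u\|_{\L^2}^2\bigr)$, which controls the LHS of the claim because $\eta \equiv 1$ on $B(x_0, 2^k r)$.

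The remaining right-hand side contributions are handled termwise. The splitting $\lambda(u, \eta^2 (u+c)) = \lambda \int \eta^2 |u|^2 + \lambda \overline c \cdot \int \eta^2 u$ feeds the first part into the coercivity estimate and yields the bound $|\lambda| |c| \|u\|_{\L^1(B(x_0, 2^{k+1}r))}$ for the second, which is exactly one of the right-hand side terms. The pairings $(f, v)$ and $(F, \nabla v)$ are estimated by Cauchy--Schwarz and Young with weights chosen so that $\|f\|^2$ appears with the prefactor $|\lambda|^{-1}$ and $\|F\|^2$ with prefactor $1$, the conjugate factors being split via $|u+c| \leq |u|+|c|$ and absorbed either into the coercive LHS, into the already produced $|\lambda||c|\|u\|_{\L^1}$ term, or into the $\|\nabla\eta\|_{\L^{\infty}}^2 \|u+c\|^2$ right-hand side contribution.

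The principal difficulty is the handling of the pressure. Unlike in the elliptic Caccioppoli, the cut-off test function $v$ cannot be chosen divergence-free without destroying its localisation, so the pressure cannot be eliminated but only localised to the oscillation $\delta \| \phi - \phi_{\cC_{k+1}} \|^2_{\L^2(\cC_{k+1})}$; the subsequent coupling with Lemma~\ref{Lem: Non-local pressure estimate} is what eventually converts this oscillation into the non-local quantity appearing in Theorem~\ref{Thm: Non-local Caccioppoli}. A secondary obstacle is the sectoriality of $\lambda + A$: for $\lambda$ in a sector of opening strictly larger than $\pi/2$ the real part of the $\lambda$-contribution is not positive, and the rotation trick above is needed to simultaneously extract the $|\lambda|\|u\|^2$ and $\|\nabla u\|^2$ parts of the left-hand side.
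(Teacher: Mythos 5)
Your proposal is correct and follows essentially the same route as the paper: the test function $v = \eta^2(u+c)$, the mean-zero property of $\operatorname{div} v$ permitting the subtraction of $\phi_{\cC_{k+1}}$, the Cauchy--Schwarz/Young treatment of the pressure and of the $f$, $F$ pairings, and the isolation of the $\lambda \overline{c}\int u\eta^2$ term into the $\lvert\lambda\rvert\lvert c\rvert\|u\|_{\L^1}$ contribution. The only cosmetic difference is that where you phrase the coercivity step as a rotation by a unimodular $\kappa$ followed by taking real parts, the paper phrases it as a lower bound $\lvert\lambda\rvert\int\lvert u\rvert^2\eta^2 + \lvert \mathrm{I}\rvert \leq C\,\lvert \lambda\int\lvert u\rvert^2\eta^2 + \mathrm{I}\rvert$ for two complex numbers lying in sectors $\S_\theta$ and $\overline{\S_{\pi-\omega}}$ with $\theta + \pi - \omega < \pi$; the proof of that sector inequality is precisely the rotation you describe, so the two are interchangeable. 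One small imprecision: G\r{a}rding should be applied to $(u+c)\eta$ rather than $\eta u$, since the second-order term in the weak form is $\fa(u,\eta^2(u+c))$ and the main quadratic piece after the product-rule decomposition is $\int \mu\, \partial[(u+c)\eta]\,\overline{\partial[(u+c)\eta]}$; this does not affect the conclusion because $\nabla[(u+c)\eta] = \nabla u$ on $B(x_0,2^k r)$ where $\eta\equiv 1$.
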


\begin{proof}
Let $\eta \in \C_c^{\infty} (B(x_0 , 2^{k + 1} r))$ with $\eta \equiv 1$ in $B(x_0 , 2^k r)$, $0 \leq \eta \leq 1$, and $\| \nabla \eta \|_{\L^{\infty}} \leq 2 / (2^k r)$. Using $v := \eta^2 (u + c)$ as a test function then delivers
\begin{align}
\label{Eq: Caccioppoli testing}
\begin{aligned}
 \lambda \int_{\IR^d} \lvert u \rvert^2 \eta^2 \, \d x + \lambda \overline{c} \int_{\IR^d} u \eta^2 \, \d x &+ \int_{\IR^d} \mu^{i j}_{\alpha \beta} \partial_{\beta} u_j \partial_{\alpha} [(\overline{u_i} + \overline{c_i}) \eta^2] \, \d x - \int_{\IR^d} \phi \, \divergence(\eta^2 (\overline{u} + \overline{c})) \, \d x \\
 &\qquad = \int_{\IR^d} f \cdot (\overline{u} + \overline{c}) \eta^2 \, \d x - \int_{\IR^d} F_{\alpha \beta} \partial_{\alpha} [(\overline{u_{\beta}} + \overline{c_{\beta}}) \eta^2] \, \d x.
\end{aligned}
\end{align}
First of all, the pressure term can be rewritten as
\begin{align*}
 \int_{\IR^d} \phi \, \divergence(\eta^2 (\overline{u} + \overline{c})) \, \d x = \int_{\IR^d} (\phi - \phi_{\cC_{k + 1}}) \, \divergence(\eta^2 (\overline{u} + \overline{c})) \, \d x = 2 \int_{\IR^d} (\phi - \phi_{\cC_{k + 1}}) \eta \nabla \eta \cdot (\overline{u} + \overline{c}) \, \d x.
\end{align*}
Next, the second-order term can be rewritten as
\begin{align*}
 &\int_{\IR^d} \mu^{i j}_{\alpha \beta} \partial_{\beta} u_j \partial_{\alpha} [(\overline{u_i} + \overline{c_i}) \eta^2] \, \d x \\
 &\qquad= \int_{\IR^d} \mu^{i j}_{\alpha \beta} \partial_{\beta} (u_j + c_j) \partial_{\alpha} [(\overline{u_i} + \overline{c_i}) \eta] \eta \, \d x + \int_{\IR^d} \mu^{i j}_{\alpha \beta} \partial_{\beta} (u_j + c_j) [\partial_{\alpha} \eta] (\overline{u_i} + \overline{c_i}) \eta \, \d x \\
 &\qquad= \int_{\IR^d} \mu^{i j}_{\alpha \beta} \partial_{\beta} [(u_j + c_j) \eta] \partial_{\alpha} [(\overline{u_i} + \overline{c_i}) \eta] \, \d x - \int_{\IR^d} \mu^{i j}_{\alpha \beta} \partial_{\beta} \eta \partial_{\alpha} [(\overline{u_i} + \overline{c_i} ) \eta] (u_j + c_j) \, \d x \\
 &\qquad\qquad + \int_{\IR^d} \mu^{i j}_{\alpha \beta} \partial_{\beta} [(u_j + c_j) \eta] [\partial_{\alpha} \eta] (\overline{u_i} + \overline{c_i}) \, \d x - \int_{\IR^d} \mu^{i j}_{\alpha \beta} \partial_{\beta} \eta [\partial_{\alpha} \eta] (\overline{u_i} + \overline{c_i}) (u_j + c_j) \, \d x \\
 &\qquad=: \mathrm{I} - \mathrm{II} + \mathrm{III} - \mathrm{IV}.
\end{align*}
Rearrange~\eqref{Eq: Caccioppoli testing} in terms of the just derived identities to conclude that
\begin{align*}
 \lambda \int_{\IR^d} \lvert u \rvert^2 \eta^2 \, \d x + \mathrm{I} &= 2 \int_{\IR^d} (\phi - \phi_{\cC_{k + 1}}) \eta \nabla \eta \cdot (\overline{u} + \overline{c}) \, \d x + \mathrm{II} - \mathrm{III} + \mathrm{IV} + \int_{\IR^d} f \cdot (\overline{u} + \overline{c}) \eta^2 \, \d x \\
 &\qquad - \int_{\IR^d} F_{\alpha \beta} \partial_{\alpha} [(\overline{u_{\beta}} + \overline{c_{\beta}}) \eta] \eta \, \d x - \int_{\IR^d} F_{\alpha \beta} [\partial_{\alpha} \eta] (\overline{u_{\beta}} + \overline{c_{\beta}}) \eta \, \d x \\
 &\qquad - \lambda \overline{c} \int_{\IR^d} u \eta^2 \, \d x.
\end{align*}
The ellipticity and boundedness conditions~\eqref{Eq: Ellipticity} and~\eqref{Eq: Boundedness} imply that
\begin{align*}
 \lvert \mathrm{I} \rvert \leq C \Re(\mathrm{I}),
\end{align*}
where $C > 0$ only depends on $\mu_{\bullet}$, $\mu^{\bullet}$, and $d$. This implies the existence of $\omega \in (\pi / 2 , \pi)$ such that
\begin{align*}
 \mathrm{I} \in \overline{\S_{\pi - \omega}}.
\end{align*}
Now, choose $\theta \in (0 , \omega)$. Since $\lambda$ is assumed to be contained in $\S_{\theta}$ and since $\theta + \pi - \omega < \pi$ there exists a constant $C > 0$ depending only on $\theta$ and $\omega$ (and thus only on $\theta$, $\mu_{\bullet}$, $\mu^{\bullet}$, and $d$) such that
\begin{align*}
 \lvert \lambda \rvert \int_{\IR^d} \lvert u \rvert^2 \eta^2 \, \d x + \lvert \mathrm{I} \rvert \leq C \Big\lvert  \lambda \int_{\IR^d} \lvert u \rvert^2 \eta^2 \, \d x + \mathrm{I} \Big\rvert.
\end{align*}
Employing another time the ellipticity condition~\eqref{Eq: Ellipticity} shows that
\begin{align}
\label{Eq: Left-hand side Caccioppoli}
 \lvert \lambda \rvert \int_{\IR^d} \lvert u \rvert^2 \eta^2 \, \d x + \int_{\IR^d} \lvert \nabla [(u + c) \eta] \rvert^2 \, \d x \leq C_{\theta , \mu_{\bullet} , \mu^{\bullet} , d} \Big\lvert  \lambda \int_{\IR^d} \lvert u \rvert^2 \eta^2 \, \d x + \mathrm{I} \Big\rvert,
\end{align}
where $C_{\theta , \mu_{\bullet} , \mu^{\bullet} , d} > 0$ still only depends on $\theta$, $\mu_{\bullet}$, $\mu^{\bullet}$, and $d$. \par
Let $\delta > 0$. The remaining terms are estimated by Young's inequality as
\begin{align}
\label{Eq: Pressure Caccioppoli}
\begin{aligned}
 &2 \Big\lvert \int_{\IR^d} (\phi - \phi_{\cC_{k + 1}}) \eta \nabla \eta \cdot (\overline{u} + \overline{c}) \, \d x \Big\rvert \\
 &\qquad \leq \frac{4 \cdot 2^{- k}}{r} \int_{\cC_{k + 1}} \lvert \phi - \phi_{\cC_{k + 1}} \rvert \lvert (u + c) \eta \rvert \, \d x \\
 &\qquad \leq \frac{\delta}{2 C_{\theta , \mu_{\bullet} , \mu^{\bullet} , d}} \int_{\cC_{k + 1}} \lvert \phi - \phi_{\cC_{k + 1}} \rvert^2 \, \d x + \frac{8 C_{\theta , \mu_{\bullet} , \mu^{\bullet} , d} 2^{- 2 k}}{\delta r^2} \int_{B(x_0 , 2^{k + 1} r)} \lvert u + c \rvert^2 \, \d x
\end{aligned}
\end{align}
and
\begin{align}
\label{Eq: Perturbative term 1}
\begin{aligned}
 \lvert \mathrm{II} - \mathrm{III} \rvert &\leq \frac{4 \cdot 2^{- k} d^4 \mu^{\bullet}}{r} \int_{\cC_{k + 1}} \lvert \nabla [(u + c) \eta] \rvert \lvert u + c \rvert \, \d x \\
 &\leq \frac{1}{4 C_{\theta , \mu_{\bullet} , \mu^{\bullet} , d}} \int_{\IR^d} \lvert \nabla [(u + c) \eta] \rvert^2 \, \d x \\
 &\qquad + \frac{32 \cdot 2^{- 2 k} d^8 (\mu^{\bullet})^2 C_{\theta , \mu_{\bullet} , \mu^{\bullet} , d}}{r^2} \int_{B(x_0 , 2^{k + 1} r)} \lvert u + c \rvert^2 \, \d x
 \end{aligned}
\end{align}
and
\begin{align}
\label{Eq: Perturbative term 2}
 \lvert \mathrm{IV} \rvert \leq \frac{4 \cdot 2^{- 2 k} d^4}{r^2} \int_{B(x_0 , 2^{k + 1} r)} \lvert u + c \rvert^2 \, \d x
\end{align}
and
\begin{align}
\label{Eq: Right-hand side Caccioppoli}
\Big\lvert \int_{\IR^d} f \cdot (\overline{u} + \overline{c} ) \eta^2 \, \d x \Big\rvert \leq \frac{C_{\theta , \mu_{\bullet} , \mu^{\bullet} , d}}{2 \lvert \lambda \rvert} \int_{B(x_0 , 2^{k + 1} r)} \lvert f \rvert^2 \, \d x + \frac{\lvert \lambda \rvert}{2 C_{\theta , \mu_{\bullet} , \mu^{\bullet} , d}} \int_{\IR^d} \lvert (u + c) \eta \rvert^2 \, \d x
\end{align}
and
\begin{align}
\label{Eq: Right-hand side divergence 1}
\begin{aligned}
 &\Big\lvert \int_{\IR^d} F_{\alpha \beta} \partial_{\alpha} [(\overline{u_{\beta}} + \overline{c_{\beta}}) \eta] \eta \, \d x \Big\rvert \\
 &\qquad\leq \frac{1}{4 C_{\theta , \mu_{\bullet} , \mu^{\bullet} , d}} \int_{\IR^d} \lvert \nabla [(u + c) \eta] \rvert^2 \, \d x + C_{\theta , \mu_{\bullet} , \mu^{\bullet} , d} \int_{B(x_0 , 2^{k + 1} r)} \lvert F \rvert^2 \, \d x
 \end{aligned}
\end{align}
and
\begin{align}
\label{Eq: Right-hand side divergence 2}
 \int_{\IR^d} F_{\alpha \beta} [\partial_{\alpha} \eta] (\overline{u_{\beta}} + \overline{c_{\beta}}) \eta \, \d x \leq \frac{4 \cdot 2^{- 2 k}}{r^2} \int_{B(x_0 , 2^{k + 1} r)} \lvert u + c \rvert^2 \, \d x + \int_{B(x_0 , 2^{k + 1} r)} \lvert F \rvert^2 \, \d x.
\end{align}
Putting all the estimates~\eqref{Eq: Left-hand side Caccioppoli},~\eqref{Eq: Pressure Caccioppoli},~\eqref{Eq: Perturbative term 1},~\eqref{Eq: Perturbative term 2},~\eqref{Eq: Right-hand side Caccioppoli},~\eqref{Eq: Right-hand side divergence 1}, and~\eqref{Eq: Right-hand side divergence 2} together and performing a rearrangment of the terms finally delivers
\begin{align*}
 \lvert \lambda \rvert \int_{\IR^d} \lvert u \eta \rvert^2 \, \d x + \int_{\IR^d} \lvert \nabla [(u + c) \eta] \rvert^2 \, \d x &\leq \delta \int_{\cC_{k + 1}} \lvert \phi - \phi_{\cC_{k + 1}} \rvert^2 \, \d x + \lvert \lambda \rvert \lvert c \rvert C \int_{B(x_0 , 2^{k + 1} r)} \lvert u \rvert \, \d x \\
 &\qquad + \Big( 1 + \frac{1}{\delta} \Big) \frac{C \cdot 2^{- 2 k}}{r^2} \int_{B(x_0 , 2^{k + 1} r)} \lvert u + c \rvert^2 \, \d x \\
 &\qquad + \frac{C}{\lvert \lambda \rvert} \int_{B(x_0 , 2^{k + 1} r)} \lvert f \rvert^2 \, \d x + C \int_{B(x_0 , 2^{k + 1} r)} \lvert F \rvert^2 \, \d x,
\end{align*}
where $C > 0$ depends only on $\theta$, $\mu_{\bullet}$, $\mu^{\bullet}$, and $d$. Finally, use that $\eta \equiv 1$ in $B(x_0 , 2^k r)$ and conclude the desired estimate.
\end{proof}

Before we come to the proof of Theorem~\ref{Thm: Non-local Caccioppoli} we state and prove the following elementary lemma.

\begin{lemma}
\label{Lem: Square in series}
Let $0 < \nu < d + 2$ and $(a_{\ell})_{\ell \in \IN_0} \in \ell^{\infty}$. Then there exists a constant $C > 0$ depending only on $d$ and $\nu$ such that
\begin{align*}
 \sum_{k = 1}^{\infty} 2^{- \nu k} \bigg( \sum_{\ell = 0}^{k - 2} 2^{\frac{d}{2} (\ell - k)} a_{\ell} + \sum_{\substack{\ell \in \IN_0 \\ \lvert \ell - k \rvert \leq 1}} a_{\ell} + \sum_{\ell = k + 2}^{\infty} 2^{(\frac{d}{2} + 1) (k - \ell)} a_{\ell} \bigg)^2 \leq C \sum_{\ell = 0}^{\infty} 2^{- \nu \ell} a_{\ell}^2.
\end{align*}
\end{lemma}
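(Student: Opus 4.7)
The plan is to expand the square using the elementary inequality $(A+B+C)^2 \le 3(A^2 + B^2 + C^2)$ and treat each of the three resulting sums separately. In each case the argument is a weighted Cauchy--Schwarz followed by an interchange of summation (Tonelli), reducing the problem to a one-dimensional Schur-type estimate.

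For the first piece $\sum_k 2^{-\nu k} \bigl( \sum_{\ell = 0}^{k-2} 2^{(d/2)(\ell - k)} a_\ell \bigr)^2$, I would split the weight symmetrically as $2^{(d/2)(\ell - k)} = 2^{(d/4)(\ell-k)} \cdot 2^{(d/4)(\ell-k)}$ and apply Cauchy--Schwarz, so that one factor yields a convergent geometric series in $k - \ell$ bounded by a constant depending only on $d$, while the other keeps the $a_\ell^2$. Swapping the order of summation, the inner sum becomes $\sum_{k \ge \ell + 2} 2^{-\nu k} 2^{(d/2)(\ell-k)}$, which is comparable to $2^{-\nu \ell}$ because $\nu + d/2 > 0$. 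The middle piece reduces immediately to $3 \sum_k 2^{-\nu k} \sum_{|\ell - k| \le 1} a_\ell^2$, and a direct interchange of summation gives at most a constant multiple of $\sum_\ell 2^{-\nu \ell} a_\ell^2$.

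The main (and only mildly delicate) step is the third piece, where the hypothesis $\nu < d + 2$ enters. Here I would introduce a parameter $s \in (0, d + 2 - \nu)$ — the interval is nonempty precisely because $\nu < d + 2$ — and split $2^{(d/2 + 1)(k - \ell)} = 2^{(s/2)(k - \ell)} \cdot 2^{((d + 2 - s)/2)(k - \ell)}$. Cauchy--Schwarz with the first factor gives a convergent geometric series (using $s > 0$), and interchanging the order of summation with the squared weight $2^{(d + 2 - s)(k - \ell)}$ reduces the problem to controlling $\sum_{k \le \ell - 2} 2^{-\nu k} 2^{(d + 2 - s)(k - \ell)}$, which is comparable to $2^{-\nu \ell}$ thanks to $d + 2 - s - \nu > 0$. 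Summing over $\ell$ concludes the bound for the third piece.

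The only obstacle is organizational: one must recognize that any $s \in (0, d + 2 - \nu)$ works for the third sum and that the endpoint restriction on $\nu$ appears exclusively there, since in the first sum the summability is automatic from $\nu > 0$. The constant $C$ produced depends on $d$ and $\nu$ through the geometric series sums, which blow up as $\nu \uparrow d + 2$, matching the sharp nature of the hypothesis.
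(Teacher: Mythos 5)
Your proof is correct and follows essentially the same path as the paper's: expand the square into three pieces via $(A+B+C)^2 \leq 3(A^2+B^2+C^2)$, split each geometric weight for Cauchy--Schwarz, then interchange the order of summation and estimate the resulting geometric series. The only cosmetic difference is that the paper fixes a single interpolation parameter $\vartheta \in (0,1)$ with $\vartheta(d+2) = (d+2+\nu)/2$ to split both the first and third weights, whereas you split the first symmetrically and introduce a free parameter $s \in (0, d+2-\nu)$ for the third (the paper's choice corresponds to $s = (d+2-\nu)/2$); your observation that the hypothesis $\nu < d+2$ is used only in the third piece matches the paper's computation.
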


\begin{proof}
Let $\vartheta \in (0 , 1)$ satisfy $\vartheta (d + 2) = (d + 2 + \nu) / 2$. By the Cauchy--Schwarz inequality the series is then estimated by
\begin{align*}
 &\sum_{k = 1}^{\infty} 2^{- \nu k} \bigg( \sum_{\ell = 0}^{k - 2} 2^{\frac{d}{2} (\ell - k)} a_{\ell} + \sum_{\substack{\ell \in \IN_0 \\ \lvert \ell - k \rvert \leq 1}} a_{\ell} + \sum_{\ell = k + 2}^{\infty} 2^{(\frac{d}{2} + 1) (k - \ell)} a_{\ell} \bigg)^2 \\
 &\quad \leq 3 \sum_{k = 1}^{\infty} 2^{- \nu k} \bigg\{ \bigg( \sum_{\ell = 0}^{k - 2} 2^{(1 - \vartheta) \frac{d}{2} (\ell - k)} 2^{\vartheta \frac{d}{2} (\ell - k)} a_{\ell} \bigg)^2 + 3 \sum_{\substack{\ell \in \IN_0 \\ \lvert \ell - k \rvert \leq 1}} a_{\ell}^2 \\
 &\quad\qquad + \bigg( \sum_{\ell = k + 2}^{\infty} 2^{(1 - \vartheta) (\frac{d}{2} + 1) (k - \ell)} 2^{\vartheta (\frac{d}{2} + 1) (k - \ell)} a_{\ell} \bigg)^2 \bigg\} \\
 &\quad \leq 3 \sum_{k = 1}^{\infty} 2^{- \nu k} \bigg\{ \sum_{\ell = 0}^{k - 2} 2^{(1 - \vartheta) d (\ell - k)} \cdot \sum_{\ell = 0}^{k - 2} 2^{\vartheta d (\ell - k)} a_{\ell}^2 + 3 \sum_{\substack{\ell \in \IN_0 \\ \lvert \ell - k \rvert \leq 1}} a_{\ell}^2 \\
 &\quad\qquad + \sum_{\ell = k + 2}^{\infty} 2^{(1 - \vartheta) (d + 2) (k - \ell)} \cdot \sum_{\ell = k + 2}^{\infty} 2^{\vartheta (d + 2) (k - \ell)} a_{\ell}^2 \bigg\}\cdotp
\end{align*}
Consequently, there exists a constant $C > 0$ depending only on $d$ and $\nu$ such that
\begin{align*}
 &\sum_{k = 1}^{\infty} 2^{- \nu k} \bigg( \sum_{\ell = 0}^{k - 2} 2^{\frac{d}{2} (\ell - k)} a_{\ell} + \sum_{\substack{\ell \in \IN_0 \\ \lvert \ell - k \rvert \leq 1}} a_{\ell} + \sum_{\ell = k + 2}^{\infty} 2^{(\frac{d}{2} + 1) (k - \ell)} a_{\ell} \bigg)^2 \\
 &\quad \leq C \sum_{k = 1}^{\infty} 2^{- \nu k} \bigg\{ \sum_{\ell = 0}^{k - 2} 2^{\vartheta d (\ell - k)} a_{\ell}^2 + \sum_{\substack{\ell \in \IN_0 \\ \lvert \ell - k \rvert \leq 1}} a_{\ell}^2 + \sum_{\ell = k + 2}^{\infty} 2^{\vartheta (d + 2) (k - \ell)} a_{\ell}^2 \bigg\}\cdotp
\end{align*}
The only terms that are of interest right now are the first and the third series. After applying Fubini's theorem to each of the series we derive by virtue of $\vartheta (d + 2) - \nu > 0$ with a different constant $C > 0$ still depending only on $d$ and $\nu$ that
\begin{align*}
 &\sum_{k = 1}^{\infty} 2^{- \nu k} \bigg\{ \sum_{\ell = 0}^{k - 2} 2^{\vartheta d (\ell - k)} a_{\ell}^2 + \sum_{\ell = k + 2}^{\infty} 2^{\vartheta (d + 2) (k - \ell)} a_{\ell}^2 \bigg\} \\
 &\quad = \sum_{\ell = 0}^{\infty} 2^{\ell \vartheta d} a_{\ell}^2 \sum_{k = \ell + 2}^{\infty} 2^{- (\vartheta d + \nu) k} + \sum_{\ell = 3}^{\infty} 2^{- \vartheta (d + 2) \ell} a_{\ell}^2 \sum_{k = 1}^{\ell - 2} 2^{(\vartheta (d + 2) - \nu ) k} \\
 &\quad\leq C \bigg\{\sum_{\ell = 0}^{\infty} 2^{- \nu \ell} a_{\ell}^2 + \sum_{\ell = 3}^{\infty} 2^{- \nu \ell} a_{\ell}^2 \bigg\}\cdotp \qedhere
\end{align*}
\end{proof}

\begin{proof}[Proof of Theorem~\ref{Thm: Non-local Caccioppoli}]
Let $\delta > 0$ be a constant to be fixed during the proof. Define $\omega \in (\pi / 2 , \pi)$ to be the number determined by Lemma~\ref{Lem: Caccioppoli with delta-pressure}. By virtue of Lemma~\ref{Lem: Caccioppoli with delta-pressure} applied with constant $c = c_k \in \IC^d$ we estimate with a constant $C > 0$ depending only on $\mu_{\bullet}$, $\mu^{\bullet}$, $d$, and $\theta$
\begin{align}
\label{Eq: Iteration}
\begin{aligned}
 &\lvert \lambda \rvert \sum_{k = 0}^{\infty} 2^{- \nu k} \int_{B(x_0 , 2^k r)} \lvert u \rvert^2 \, \d x + \sum_{k = 0}^{\infty} 2^{- \nu k} \int_{B(x_0 , 2^k r)} \lvert \nabla u \rvert^2 \, \d x \\
 &\qquad \leq \delta 2^{\nu} \sum_{k = 1}^{\infty} 2^{- \nu k} \int_{\cC_k} \lvert \phi - \phi_{\cC_k} \rvert^2 \, \d x + \Big( 1 + \frac{1}{\delta} \Big) \frac{C}{r^2} \sum_{k = 0}^{\infty} 2^{- (\nu + 2) k} \int_{B(x_0 , 2^{k + 1} r)} \lvert u + c_k \rvert^2 \, \d x \\
 &\qquad\qquad + \lvert \lambda \rvert \sum_{k = 0}^{\infty} \lvert c_k \rvert 2^{- \nu k} \int_{B(x_0 , 2^{k + 1} r)} \lvert u \rvert \, \d x + \frac{C}{\lvert \lambda \rvert} \sum_{k = 0}^{\infty} 2^{- \nu k} \int_{B(x_0 , 2^{k + 1} r)} \lvert f \rvert^2 \, \d x \\
 &\qquad\qquad + C \sum_{k = 0}^{\infty} 2^{- \nu k} \int_{B(x_0 , 2^{k + 1} r)} \lvert F \rvert^2 \, \d x.
 \end{aligned}
\end{align}
Now, we employ Lemma~\ref{Lem: Non-local pressure estimate} first, followed by Lemma~\ref{Lem: Square in series} with $a_{\ell} := \| \nabla u \|_{\L^2 (\cC_{\ell})} + \| F \|_{\L^2 (\cC_{\ell})}$ for $\ell \in \IN_0$ to establish for some constant $C_{d , \nu} > 0$ that
\begin{align*}
 &\sum_{k = 1}^{\infty} 2^{- \nu k} \int_{\cC_k} \lvert \phi - \phi_{\cC_k} \rvert^2 \, \d x \\
 &\leq \sum_{k = 1}^{\infty} 2^{- \nu k} \bigg( \sum_{\ell = 0}^{k - 2} 2^{\frac{d}{2} (\ell - k)} a_{\ell} + \sum_{\substack{\ell \in \IN_0 \\ \lvert \ell - k \rvert \leq 1}} a_{\ell} + \sum_{\ell = k + 2}^{\infty} 2^{(\frac{d}{2} + 1) (k - \ell)} a_{\ell} \bigg)^2 \\
 &\leq C_{d , \nu} \sum_{\ell = 0}^{\infty} 2^{- \nu \ell} \bigg( \int_{\cC_{\ell}} \lvert \nabla u \rvert^2 \, \d x + \int_{\cC_{\ell}} \lvert F \rvert^2 \, \d x \bigg).
\end{align*}
Plugging this estimate into~\eqref{Eq: Iteration} and using that $\cC_k \subset B(x_0 , 2^k r)$ then delivers
\begin{align*}
 &\lvert \lambda \rvert \sum_{k = 0}^{\infty} 2^{- \nu k} \int_{B(x_0 , 2^k r)} \lvert u \rvert^2 \, \d x + \sum_{k = 0}^{\infty} 2^{- \nu k} \int_{B(x_0 , 2^k r)} \lvert \nabla u \rvert^2 \, \d x \allowdisplaybreaks \\
 &\qquad \leq \delta 2^{\nu} C_{d , \nu} \sum_{k = 0}^{\infty} 2^{- \nu k} \int_{B(x_0 , 2^k r)} \lvert \nabla u \rvert^2 \, \d x + \Big( 1 + \frac{1}{\delta} \Big) \frac{C}{r^2} \sum_{k = 0}^{\infty} 2^{- (\nu + 2) k} \int_{B(x_0 , 2^{k + 1} r)} \lvert u + c_k \rvert^2 \, \d x \\
 &\qquad\qquad + \lvert \lambda \rvert \sum_{k = 0}^{\infty} \lvert c_k \rvert 2^{- \nu k} \int_{B(x_0 , 2^{k + 1} r)} \lvert u \rvert \, \d x + \frac{C}{\lvert \lambda \rvert} \sum_{k = 0}^{\infty} 2^{- \nu k} \int_{B(x_0 , 2^{k + 1} r)} \lvert f \rvert^2 \, \d x \\
 &\qquad\qquad + (C + C_{d , \nu} \delta) \sum_{k = 0}^{\infty} 2^{- \nu k} \int_{B(x_0 , 2^{k + 1} r)} \lvert F \rvert^2 \, \d x.
\end{align*}
Now, choose $\delta$ such that $\delta 2^{\nu} C_{d , \nu} = 1 / 2$. Absorbing the first term on the right-hand side into the corresponding term on the left-hand side finally yields the desired estimate.
\end{proof}

\section{A digression on resolvent estimates and maximal regularity}

\noindent We start this section with a definition.

\begin{definition}
\label{Def: Sectorial and R-sectorial}
Let $X$ and $Y$ denote a Banach space over the complex field and $B : \dom(B) \subset X \to X$ a linear operator. 
\begin{enumerate}
 \item The operator $B$ is said to be \textit{sectorial} of angle $\omega \in (0 , \pi)$ if
 \begin{align*}
  \sigma (B) \subset \overline{\S_{\omega}}
 \end{align*}
 and if for all $0 < \theta < \pi - \omega$ the family $\{ \lambda (\lambda + B)^{-1} \}_{\lambda \in \S_{\theta}} \subset \cL(X)$ is bounded.
 \item A family of operators $\cT \subset \cL (X, Y)$ is said to be \textit{$\cR$-bounded} if there exists a positive constant $C > 0$ such that for any $k_0 \in \IN$, $(T_k)_{k = 1}^{k_0} \subset \cT$, and $(x_k)_{k = 1}^{k_0} \subset X$ the inequality
\begin{align*}
\Big\| \sum_{k = 1}^{k_0} r_k (\cdot) T_k x_k \Big\|_{\L^2 (0 , 1 ; Y)}
\leq C \Big\| \sum_{k = 1}^{k_0} r_k (\cdot) x_k \Big\|_{\L^2 (0 , 1 ; X)}
\end{align*}
holds. Here, $r_k (t) := \mathrm{sgn} (\sin(2^k \pi t))$ are the \textit{Rademacher-functions}. \label{Item: R-boundedness}
 \item The operator $B$ is said to be \textit{$\cR$-sectorial} of angle $\omega \in [0 , \pi)$ if
 \begin{align*}
  \sigma (B) \subset \overline{\S_{\omega}}
 \end{align*}
 and if for all $0 < \theta < \pi - \omega$ the family $\{ \lambda (\lambda + B)^{-1} \}_{\lambda \in \S_{\theta}} \subset \cL(X)$ is $\cR$-bounded.
\end{enumerate}
\end{definition}

\begin{remark}
\label{Rem: R-boundedness}
\begin{enumerate}
 \item By taking $k_0 = 1$ one sees that $\cR$-boundedness implies boundedness of a family of operators. If $X$ and $Y$ are isomorphic to a Hilbert space, then $\cR$-boundedness is equivalent to the boundedness of the family of operators, see~\cite[Rem.~3.2]{Denk_Hieber_Pruess}. \label{Item: R-boundedness implies boundedness}
 \item If $X$ is a subspace of $\L^p (\Omega ; \IC^m)$ for some $1 < p < \infty$, $m \in \IN$, and $\Omega \subset \IR^d$ Lebesgue measurable, then there exists $C > 0$ such that for all $k_0 \in \IN$ and $(f_k)_{k = 1}^{k_0}$ it holds
 \begin{align*}
  \frac{1}{C} \Big\| \sum_{k = 1}^{k_0} r_k (\cdot) f_k \Big\|_{\L^2 (0 , 1 ; X)} \leq \Big\| \Big[ \sum_{k = 1}^{k_0} \lvert f_k \rvert^2 \Big]^{1 / 2} \Big\|_{\L^p (\Omega)} \leq C \Big\| \sum_{k = 1}^{k_0} r_k (\cdot) f_k \Big\|_{\L^2 (0 , 1 ; X)}.
 \end{align*}
 This means, that $\cR$-boundedness in $\L^p$-spaces is equivalent to so-called square function estimates~\cite[Rem.~3.2]{Denk_Hieber_Pruess}. \label{Item: Square function estimate}
 \item The operator $- B$ generates a strongly continuous bounded analytic semigroup on $X$ if and only if $B$ is densely defined and sectorial of angle $\omega \in [0 , \pi / 2)$, see~\cite[Thm.~II.4.6]{Engel_Nagel}. \label{Item: Sectoriality and semigroups}
\end{enumerate}
\end{remark}

If $X$ is a $\mathrm{UMD}$-space, then the question of $\cR$-sectoriality is intimately related to the question of the maximal $\L^q$-regularity of generators of a bounded analytic semigroup $- B : \dom(B) \subset X \to X$. For $1 < q < \infty$, we say that $B$ has maximal $\L^q$-regularity if for all $f \in \L^q (0 , \infty ; X)$ the unique mild solution to the abstract Cauchy problem
\begin{align*}
 \left\{ \begin{aligned}
  u^{\prime} (t) + B u (t) &= f (t), && t > 0, \\
  u(0) &= 0
 \end{aligned} \right.
\end{align*}
which is given by Duhamel's formula
\begin{align*}
 u (t) = \int_0^t \e^{- (t - s) B} f(s) \, \d s
\end{align*}
satisfies for almost every $t > 0$ that $u(t) \in \dom(B)$ and $B u \in \L^q (0 , \infty ; X)$. In this case $u$ is also weakly differentiable with respect to $t$ and satisfies $u^{\prime} \in \L^q (0 , \infty ; X)$. By employing the closed graph theorem, the mere fact that $u^{\prime}$ and $B u$ lie in $\L^q (0 , \infty ; X)$ implies the existence of $C > 0$ such that for all $f \in \L^q (0 , \infty ; X)$ the stability estimate
\begin{align*}
 \| u^{\prime} \|_{\L^q (0 , \infty ; X)} + \| B u \|_{\L^q (0 , \infty ; X)} \leq C \| f \|_{\L^q (0 , \infty ; X)}
\end{align*}
holds. Notice further, that the property that $B$ has maximal $\L^q$-regularity is \textit{independent} of $p$, see~\cite[Thm.~7.1]{Dore}. \par
A seminal result of Weis~\cite[Thm.~4.2]{Weis} now builds the bridge between the notions of maximal $\L^q$-regularity and $\cR$-sectoriality.

\begin{theorem}[Weis]
\label{Thm: Weis}
If $X$ is a $\mathrm{UMD}$-space and $- B : \dom(B) \subset X \to X$ the generator of a bounded analytic semigroup, then $B$ has maximal $\L^q$-regularity for $1 < q < \infty$ if and only if $B$ is $\cR$-sectorial of angle $\omega \in [0 , \pi / 2)$.
\end{theorem}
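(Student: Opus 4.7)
The plan is to reformulate the two properties in terms of a single Fourier multiplier operator on $\L^q(\IR ; X)$ and then translate the equivalence into the corresponding statement about operator-valued Fourier multipliers, for which the $\UMD$-hypothesis is indispensable. Concretely, given $f \in \L^q(0,\infty; X)$ extended by zero to $\IR$, the mild solution $u$ of the Cauchy problem satisfies $B u(t) = \int_{\IR} K(t-s) f(s)\,\d s$ with $K(t) = B \e^{- t B} \ind_{\{t>0\}}$. Passing to the Fourier transform, this is the multiplier operator associated with the operator-valued symbol
\begin{align*}
 m(\xi) := B(\ii \xi + B)^{-1} = \Id - \ii \xi (\ii \xi + B)^{-1} \qquad (\xi \in \IR \setminus \{0\}),
\end{align*}
so maximal $\L^q$-regularity is equivalent to the assertion that $m$ induces a bounded Fourier multiplier on $\L^q(\IR; X)$.

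For the implication from $\cR$-sectoriality to maximal $\L^q$-regularity, I would invoke the operator-valued Mikhlin multiplier theorem in $\UMD$-spaces, which asserts that a symbol $m : \IR \setminus \{0\} \to \cL(X)$ defines a bounded Fourier multiplier on $\L^q(\IR; X)$ whenever the two families $\{m(\xi)\}_{\xi \neq 0}$ and $\{\xi m^\prime(\xi)\}_{\xi \neq 0}$ are $\cR$-bounded. One then checks both conditions by hand. The first family is $\cR$-bounded directly by $\cR$-sectoriality since $\ii\IR \setminus \{0\} \subset \S_{\theta}$ for every $\theta \in (\pi/2, \pi - \omega)$, so $\{\ii\xi(\ii\xi+B)^{-1}\}$ is $\cR$-bounded and hence so is $\{m(\xi)\}$. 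A short computation gives
\begin{align*}
 \xi m^\prime(\xi) = - \bigl[ \Id - B(\ii \xi + B)^{-1}\bigr]\, B(\ii \xi + B)^{-1},
\end{align*}
and the standard permanence properties of $\cR$-boundedness (sums and products of $\cR$-bounded families remain $\cR$-bounded) finish the verification.

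For the converse implication, the starting observation is that maximal $\L^q$-regularity together with the closed graph theorem supplies a bounded multiplier operator $T_m$ on $\L^q(\IR;X)$. The goal is then to extract $\cR$-boundedness of $\{\lambda(\lambda+B)^{-1}\}_{\lambda \in \S_\theta}$ for each $0 < \theta < \pi/2$ from the boundedness of $T_m$ alone. The standard route is to use the fact that for any finitely many $\lambda_k \in \S_\theta$ and $x_k \in X$, one can produce vector-valued test functions in $\L^q(\IR; X)$ of the form $\sum_k \varphi_k(t)\, x_k$ whose images under $T_m$ realize linear combinations of $\lambda_k(\lambda_k + B)^{-1} x_k$. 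Averaging these constructions with Rademacher-random signs and invoking Kahane's contraction principle and Fubini converts the uniform boundedness of $T_m$ into the Rademacher-type estimate that defines $\cR$-boundedness.

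The main obstacle is undoubtedly the operator-valued Mikhlin multiplier theorem, which is the deep analytic heart of the matter and the place where the $\UMD$-assumption on $X$ enters essentially; in Hilbert space this is Plancherel, but in general $\UMD$-spaces it is itself due to Weis and requires a randomized Littlewood--Paley decomposition together with the $\UMD$-property of $\L^q(0,1;X)$. A secondary but nontrivial point is the converse direction: deriving a \emph{randomized} (i.e., $\cR$-)bound from the single norm estimate for $T_m$, which uses in a crucial way that $\L^q$ with $1<q<\infty$ has nontrivial type and cotype so that Rademacher averages can be compared with square functions, together with translation invariance in the time variable to reduce resolvent estimates at different $\lambda_k$ to a single multiplier applied to a vector-valued function.
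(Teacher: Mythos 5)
The paper does not prove this statement; it is a black-box citation of \cite[Thm.~4.2]{Weis}, and no internal argument is offered, so there is no ``paper's proof'' to compare against. Your sketch is an independent reconstruction of Weis's own argument and is substantially correct: the recasting of maximal $\L^q$-regularity as boundedness of the operator-valued Fourier multiplier with symbol $m(\xi)=B(\ii\xi+B)^{-1}$ on $\L^q(\IR;X)$, the use of the operator-valued Mikhlin theorem in $\UMD$-spaces (itself the central Theorem of that paper, proved via randomized Littlewood--Paley decompositions) for the sufficiency direction, the computation $\xi m^{\prime}(\xi) = -\bigl[\Id - B(\ii\xi+B)^{-1}\bigr]B(\ii\xi+B)^{-1}$, and the closure of $\cR$-bounded families under sums and products are all exactly what occurs in Weis's proof.

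For the necessity direction, your description captures the Cl\'ement--Pr\"uss randomization mechanism in spirit. One step your sketch glosses over: the multiplier argument by itself only yields $\cR$-boundedness of $\{\ii t(\ii t+B)^{-1}\}_{t\in\IR\setminus\{0\}}$ on the imaginary axis, whereas $\cR$-sectoriality of angle $\omega\in[0,\pi/2)$ requires $\cR$-boundedness of $\{\lambda(\lambda+B)^{-1}\}_{\lambda\in\S_\theta}$ on an open sector $\S_\theta$ with $\theta>\pi/2$. Upgrading the boundary estimate to the interior of the sector is a nontrivial additional step, typically done by a Cauchy/Poisson-type integral representation of the resolvent at $\lambda\in\S_\theta$ against the resolvents on $\ii\IR$, combined with the fact that strong integral means of $\cR$-bounded families against uniformly bounded scalar kernels remain $\cR$-bounded. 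This is not a flaw in your approach---it is part of the standard argument---but it should be recorded explicitly rather than absorbed into the phrase ``translation invariance in the time variable.''
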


We now turn our focus towards the Stokes operator. We state the following proposition concerning the required $\L^2$-theory. Its proof is omitted as it relies on a simple application of the Lax--Milgram lemma and of using the solution $u$ as a test function to the resolvent equation.

\begin{proposition}
\label{Prop: L2 properties}
Let $\mu$ satisfy Assumption~\ref{Ass: Coefficients} for some constants $\mu_{\bullet} , \mu^{\bullet} > 0$. Then there exists $\omega \in (\pi / 2 , \pi)$ depending on $d$, $\mu_{\bullet}$, and $\mu^{\bullet}$ such that $\S_{\omega} \subset \rho(- A)$ and for all $\theta \in (0 , \omega)$ there exists $C > 0$ such that for all $\lambda \in \S_{\theta}$ and all $f \in \L^2_{\sigma} (\IR^d)$ it holds with $u := (\lambda + A)^{-1} f$ that
\begin{align*}
 \lvert \lambda \rvert \| u \|_{\L^2_{\sigma}} + \lvert \lambda \rvert^{\frac{1}{2}} \| \nabla u \|_{\L^2} \leq C \| f \|_{\L^2_{\sigma}}.
\end{align*}
Moreover, for all $\lambda \in \S_{\theta}$ and all $F \in \C_c^{\infty} (\IR^d ; \IC^{d \times d})$ there exists $C > 0$ such that with $u := (\lambda + A)^{-1} \IP \divergence(F)$ it holds
\begin{align*}
 \lvert \lambda \rvert^{\frac{1}{2}} \| u \|_{\L^2_{\sigma}} + \| \nabla u \|_{\L^2} \leq C \| F \|_{\L^2}.
\end{align*}
The constants $C$ only depend on $d$, $\mu_{\bullet}$, $\mu^{\bullet}$, and $\theta$.
\end{proposition}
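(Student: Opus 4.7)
The plan is to apply a complex version of the Lax--Milgram lemma to the sesquilinear form
\begin{align*}
 b_{\lambda} (u , v) := \lambda \int_{\IR^d} u \cdot \overline{v} \, \d x + \fa (u , v)
\end{align*}
on the Hilbert space $\H^1_{\sigma} (\IR^d)$ and then to test the resulting identity with $v = u$ to extract the announced estimates.

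First I would check that $\fa$ is bounded on $\H^1_{\sigma} (\IR^d)$ and that its numerical range sits inside a sector $\overline{\S_{\psi}}$ for some $\psi \in (0 , \pi / 2)$ depending only on $d$, $\mu_{\bullet}$, $\mu^{\bullet}$: boundedness $\lvert \fa (u , u) \rvert \leq C_{d , \mu^{\bullet}} \| \nabla u \|_{\L^2}^2$ follows from~\eqref{Eq: Boundedness} and Cauchy--Schwarz, while $\Re \fa (u , u) \geq \mu_{\bullet} \| \nabla u \|_{\L^2}^2$ is precisely~\eqref{Eq: Ellipticity}, and together they force $\lvert \Im \fa (u , u) \rvert / \Re \fa (u , u) \leq \tan \psi$ for such a $\psi$. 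Setting $\omega := \pi - \psi \in (\pi / 2 , \pi)$ and fixing any $\theta \in (0 , \omega)$, the sum $\lambda \| u \|_{\L^2}^2 + \fa (u , u)$ lies in the sector $\overline{\S_{\theta + \psi}}$ with $\theta + \psi < \pi$, so that by elementary planar geometry there exists $c_{\theta} > 0$ depending only on $d , \mu_{\bullet} , \mu^{\bullet} , \theta$ with
\begin{align*}
 \lvert b_{\lambda} (u , u) \rvert \geq c_{\theta} \big( \lvert \lambda \rvert \| u \|_{\L^2}^2 + \| \nabla u \|_{\L^2}^2 \big).
\end{align*}
Boundedness of $b_{\lambda}$ on $\H^1_{\sigma} (\IR^d)$ follows similarly from~\eqref{Eq: Boundedness}.

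The Lax--Milgram lemma then furnishes, for every $f \in \L^2_{\sigma} (\IR^d)$, a unique $u \in \H^1_{\sigma} (\IR^d)$ with $b_{\lambda} (u , v) = \int_{\IR^d} f \cdot \overline{v} \, \d x$ for all $v \in \H^1_{\sigma} (\IR^d)$, which by the very definition of $A$ amounts to $u \in \dom(A)$ and $(\lambda + A) u = f$; in particular $\S_{\omega} \subset \rho(-A)$. Testing with $v = u$ and combining with the coercivity estimate above yields
\begin{align*}
 \lvert \lambda \rvert \| u \|_{\L^2}^2 + \| \nabla u \|_{\L^2}^2 \leq c_{\theta}^{-1} \| f \|_{\L^2} \| u \|_{\L^2},
\end{align*}
and a standard Young estimate then delivers both $\lvert \lambda \rvert \| u \|_{\L^2} \leq C \| f \|_{\L^2}$ and $\lvert \lambda \rvert^{1 / 2} \| \nabla u \|_{\L^2} \leq C \| f \|_{\L^2}$.

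For the divergence-form right-hand side the very same construction applies with $\IP \divergence (F)$ in place of $f$, which lies in $\L^2_{\sigma} (\IR^d)$ since $F \in \C_c^{\infty}$. For solenoidal test functions $v$ the self-adjointness of the Helmholtz projection together with integration by parts rewrites the right-hand side as $\int_{\IR^d} \divergence(F) \cdot \overline{v} \, \d x = - \int_{\IR^d} F_{\alpha \beta} \, \overline{\partial_{\alpha} v_{\beta}} \, \d x$, the pressure-type contribution being annihilated by $\divergence (v) = 0$. Testing with $v = u$ and invoking coercivity now produces $\lvert \lambda \rvert \| u \|_{\L^2}^2 + \| \nabla u \|_{\L^2}^2 \leq c_{\theta}^{-1} \| F \|_{\L^2} \| \nabla u \|_{\L^2}$, and Young's inequality yields the two claimed bounds. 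The one step that requires any care is the sectoriality/rotation bookkeeping in the second paragraph (essentially Kato's first representation theorem dressed as a complex Lax--Milgram argument); everything else reduces to routine test-function manipulations.
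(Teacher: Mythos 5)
Your argument is correct and is exactly the approach the paper alludes to (and omits): apply Lax--Milgram to the shifted form $b_{\lambda}$ on $\H^1_{\sigma}(\IR^d)$ and then test with $v = u$. One small inaccuracy in the write-up is harmless but worth noting: the sum $\lambda \|u\|_{\L^2}^2 + \fa(u,u)$ actually lies in $\overline{\S_{\max(\theta,\psi)}}$, not merely in $\overline{\S_{\theta+\psi}}$; what you in fact use — and what is correct — is that the two summands are separated in argument by strictly less than $\pi$ (since $\theta + \psi < \pi$), which yields the lower bound $\lvert b_{\lambda}(u,u) \rvert \geq c_{\theta}\big( \lvert \lambda \rvert \| u \|_{\L^2}^2 + \| \nabla u \|_{\L^2}^2 \big)$, and the rest follows as you say.
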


\begin{observation}
\label{Obs: Square function estimate}
In the situation of Proposition~\ref{Prop: L2 properties} we see that the family $\{ \lambda (\lambda + A)^{-1} \}_{\lambda \in \S_{\theta}}$ is bounded for all $\theta \in (0 , \omega)$. Thus, $A$ regarded as an operator on $\L^2_{\sigma} (\IR^d)$ is sectorial of angle $\omega$. Moreover, by Remark~\ref{Rem: R-boundedness}~\eqref{Item: R-boundedness implies boundedness} the operator $A$ is also $\cR$-sectorial of angle $\omega$. \par
A combination of Definition~\ref{Def: Sectorial and R-sectorial}~\eqref{Item: R-boundedness} and Remark~\ref{Rem: R-boundedness}~\eqref{Item: Square function estimate} reveals that the following statement is immediate: \par
For $1 < p < \infty$, the family $\{ \lambda (\lambda + A)^{-1} \}_{\lambda \in \S_{\theta}}$ extends from $\C_{c , \sigma}^{\infty} (\IR^d)$ to an $\cR$-bounded family in $\cL(\L^p_{\sigma} (\IR^d))$ if and only if there exists $C > 0$ such that for all $k_0 \in \IN$, all $(\lambda_k)_{k = 1}^{k_0} \subset \S_{\theta}$, and all $f := (f_1 , \dots , f_{k_0}, 0 \dots)$ with $f_k \in \C_{c , \sigma}^{\infty} (\IR^d)$, $1 \leq k \leq k_0$, the operator $T_{(\lambda_k)_{k = 1}^{k_0}}$ given by
\begin{align*}
 T_{(\lambda_k)_{k = 1}^{k_0}} f \mapsto \begin{pmatrix} \lambda_1 (\lambda_1 + A)^{-1} f_1 \\ \vdots \\ \lambda_{k_0} (\lambda_{k_0} + A)^{-1} f_{k_0} \\ 0 \\ \vdots \end{pmatrix}
\end{align*}
satisfies
\begin{align}
\label{Eq: Vector-valued boundedness}
 \| T_{(\lambda_k)_{k = 1}^{k_0}} f \|_{\L^p (\IR^d ; \ell^2 (\IC^d))} \leq C \| f \|_{\L^p (\IR^d ; \ell^2 (\IC^d))}.
\end{align}
In other words, the family of all operators that can be formed by the procedure above extends to a bounded family in $\L^p (\IR^d ; \ell^2 (\IC^d))$. In particular, from the first part of this observation, we know that this statement is valid in the case $p = 2$.
\end{observation}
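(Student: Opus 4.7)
The observation has two pieces, and both are essentially unpackings of Definitions~\ref{Def: Sectorial and R-sectorial} combined with Remark~\ref{Rem: R-boundedness} and the $\L^2$-theory of Proposition~\ref{Prop: L2 properties}. The plan is to dispatch each piece with a short bookkeeping argument; no new analysis is required.

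For the first assertion, Proposition~\ref{Prop: L2 properties} supplies both $\S_{\omega} \subset \rho(-A)$ and the uniform bound $\lvert \lambda \rvert \| (\lambda+A)^{-1} f\|_{\L^2_{\sigma}} \le C \|f\|_{\L^2_{\sigma}}$ for $\lambda \in \S_{\theta}$ and $\theta \in (0 , \omega)$. Plugging these into Definition~\ref{Def: Sectorial and R-sectorial}(1) (with $X = \L^2_{\sigma}(\IR^d)$) gives sectoriality of $A$ of angle $\omega$. Since $\L^2_{\sigma}(\IR^d)$ is a Hilbert space, Remark~\ref{Rem: R-boundedness}(1) upgrades every bounded family in $\cL(\L^2_{\sigma})$ to an $\cR$-bounded one. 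Applying this to the already bounded resolvent family yields $\cR$-sectoriality of $A$ on $\L^2_{\sigma}(\IR^d)$ of angle $\omega$.

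For the equivalence stated at the end of the observation, I would spell out Definition~\ref{Def: Sectorial and R-sectorial}(2) for the family $\{\lambda(\lambda+A)^{-1}\}_{\lambda \in \S_{\theta}}$ acting on $X = \L^p_{\sigma}(\IR^d)$: this $\cR$-boundedness means that for any finite choice of $(\lambda_k)_{k=1}^{k_0} \subset \S_{\theta}$ and $(f_k)_{k=1}^{k_0} \subset \C^{\infty}_{c , \sigma}(\IR^d)$,
\begin{align*}
\Big\| \sum_{k = 1}^{k_0} r_k(\cdot) \lambda_k (\lambda_k + A)^{-1} f_k \Big\|_{\L^2(0,1;\L^p_{\sigma})} \leq C \Big\| \sum_{k = 1}^{k_0} r_k(\cdot) f_k \Big\|_{\L^2(0,1;\L^p_{\sigma})}.
\end{align*}
Because $\L^p_{\sigma}(\IR^d)$ is a closed subspace of $\L^p(\IR^d ; \IC^d)$, Remark~\ref{Rem: R-boundedness}(2) converts each of these two Rademacher norms, up to universal constants, into the corresponding $\ell^2$-valued $\L^p$ square function $\|(\sum_k |\cdot|^2)^{1/2}\|_{\L^p}$. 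This square function estimate is precisely the statement $\|T_{(\lambda_k)} f\|_{\L^p(\IR^d;\ell^2(\IC^d))} \le C \|f\|_{\L^p(\IR^d;\ell^2(\IC^d))}$ from~\eqref{Eq: Vector-valued boundedness}, so the two formulations are equivalent. Extension from test functions to all of $\L^p_{\sigma}(\IR^d)$ is by density, legitimised by the a priori bound itself.

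Finally, the validity at $p = 2$ is immediate: here $\L^2(\IR^d ; \ell^2(\IC^d))$ is a Hilbert space, so
\begin{align*}
\|T_{(\lambda_k)_{k = 1}^{k_0}} f\|_{\L^2(\IR^d;\ell^2)}^2 = \sum_{k = 1}^{k_0} \|\lambda_k(\lambda_k + A)^{-1} f_k\|_{\L^2_{\sigma}}^2 \leq C^2 \sum_{k = 1}^{k_0} \|f_k\|_{\L^2_{\sigma}}^2 = C^2 \|f\|_{\L^2(\IR^d;\ell^2)}^2,
\end{align*}
where each summand is controlled by the uniform resolvent bound of Proposition~\ref{Prop: L2 properties}. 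The entire argument is definitional; the only point that requires care is confirming that $\L^p_{\sigma}(\IR^d)$ sits as a closed subspace of $\L^p(\IR^d;\IC^d)$ so that Remark~\ref{Rem: R-boundedness}(2) applies verbatim. There is no genuine obstacle.
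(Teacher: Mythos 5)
Your proposal is correct and follows exactly the definitional route the paper indicates: Proposition~\ref{Prop: L2 properties} plus Remark~\ref{Rem: R-boundedness}~(1) for $\L^2$-sectoriality and $\cR$-sectoriality, then Definition~\ref{Def: Sectorial and R-sectorial}~(2) together with Remark~\ref{Rem: R-boundedness}~(2) to translate the Rademacher-average formulation of $\cR$-boundedness into the $\ell^2$-valued $\L^p$ bound~\eqref{Eq: Vector-valued boundedness}, and a direct computation for $p=2$. The paper leaves all of this implicit (calling it ``immediate''), and your write-up fills in the bookkeeping faithfully with no gaps.
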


\section{A glimpse onto a non-local $\L^p$-extrapolation theorem}
\label{Sec: A glimpse onto a non-local Lp-extrapolation theorem}

\noindent To extrapolate the $\cR$-bounded family $\{ \lambda (\lambda + A)^{-1} \}_{\lambda \in \S_{\theta}}$ for $\theta \in (0 , \omega)$ in $\Lop(\L^2_{\sigma} (\IR^d))$ to an $\cR$-bounded family in $\cL(\L^p_{\sigma} (\IR^d))$ for $p$ satisfying~\eqref{Eq: Lp interval} we want to employ the following vector valued and non-local analogue of Shen's $\L^p$-extrapolation theorem~\cite[Thm.~1.2]{Tolksdorf_nonlocal}. See~\cite[Thm.~3.1]{Shen} for Shen's original theorem.

\begin{theorem}
\label{Thm: Non-local, vector valued Shen}
Let $X$, $Y$, and $Z$ be Banach spaces, $\cM , \cN > 0$, and let 
\begin{align*}
 T \in \Lop(\L^2(\IR^d ; X) , \L^2(\IR^d ; Y)) \quad &\text{with} \quad \| T \|_{\Lop(\L^2(\IR^d ; X) , \L^2(\IR^d ; Y))} \leq \cM \\
  \text{and} \quad \cC \in \Lop(\L^2(\IR^d ; X) , \L^2(\IR^d ; Z)) \quad &\text{with} \quad \| \cC \|_{\Lop(\L^2(\IR^d ; X) , \L^2(\IR^d ; Y))} \leq \cN.
\end{align*}

\indent Suppose that there exist constants $p_0 > 2$, $\iota > 1$, and $C > 0$ such that for all balls $B \subset \IR^d$ and all compactly supported $f \in \L^{\infty}(\IR^d ; X)$ with $f = 0$ in $\iota B$ the estimate
\begin{align}
\label{Eq: Weak reverse Hoelder inequality}
 \begin{aligned}
 \bigg( \fint_{B} \| T f \|_Y^{p_0} \; \d x \bigg)^{\frac{1}{p_0}} &\leq C \sup_{B^{\prime} \supset B} \bigg( \fint_{B^{\prime}} \big( \| T f \|_Y^2 + \| \cC f \|_Z^2 \big) \; \d x \bigg)^{\frac{1}{2}}
 \end{aligned}
\end{align}
holds. Here the supremum runs over all balls $B^{\prime}$ containing $B$. \par
Then for each $2 < p < p_0$ there exists a constant $K > 0$ such that for all $f \in \L^{\infty} (\IR^d ; X)$ with compact support it holds
\begin{align*}
 \| T f \|_{\L^p (\IR^d ; Y)} \leq K \big( \| f \|_{\L^p (\IR^d ; X)} + \| \cC f \|_{\L^p (\IR^d ; Z)} \big).
\end{align*}
In particular, if $\cC$ is bounded from $\L^p (\IR^d ; X)$ into $\L^p (\IR^d ; Z)$, then the restriction of $T$ onto $\L^2(\IR^d ; X) \cap \L^p(\IR^d ; X)$ extends to a bounded linear operator from $\L^p (\IR^d ; X)$ into $\L^p(\IR^d ; Y)$. The constant $K$ depends only on $d$, $p_0$, $p$, $\iota$, $C$, $\cM$, and $\cN$.
\end{theorem}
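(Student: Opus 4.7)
The plan is to adapt the non-local Shen-type extrapolation argument of \cite{Tolksdorf_nonlocal} to the present vector-valued setting. Fix $p \in (2, p_0)$ and $f \in \L^{\infty}(\IR^d; X)$ with compact support, and abbreviate $F := \|f\|_X$, $G := \|\cC f\|_Z$, $H := \|Tf\|_Y$. By density and the closed graph theorem, it suffices to prove
\begin{align*}
\|H\|_{\L^p(\IR^d)} \leq K \bigl( \|F\|_{\L^p(\IR^d)} + \|G\|_{\L^p(\IR^d)} \bigr).
\end{align*}

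The first step is to upgrade the hypothesis into a weak reverse Hölder inequality that does not require the vanishing condition $f \equiv 0$ on $\iota B$. For an arbitrary ball $B$, split $f = f \ind_{\iota B} + f \ind_{(\iota B)^c}$. The $\L^2$-boundedness of $T$ and $\cC$ (with constants $\cM$ and $\cN$) controls the contributions of the local part by a constant multiple of $(\fint_{\iota B} F^2 \, \d x)^{1/2}$, while \eqref{Eq: Weak reverse Hoelder inequality} applied to the far part controls the $\L^{p_0}$-mean of $\|T(f \ind_{(\iota B)^c})\|_Y$ on $B$ by the non-local supremum of $\L^2$-means. Reassembling $f$ and using the elementary inequality $(a+b)^{p_0} \leq 2^{p_0-1}(a^{p_0} + b^{p_0})$ yields the enriched estimate
\begin{align*}
\bigg( \fint_B H^{p_0} \, \d x \bigg)^{1/p_0} \leq C \sup_{B' \supset B} \bigg( \fint_{B'} \bigl( H^2 + F^2 + G^2 \bigr) \, \d x \bigg)^{1/2}
\end{align*}
for every ball $B$, with $C$ depending on $d$, $p_0$, $\cM$, $\cN$, $\iota$, and the constant from \eqref{Eq: Weak reverse Hoelder inequality}.

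The second step applies a good-$\lambda$ / Calderón--Zygmund argument. The observation that defuses the non-locality is the pointwise bound $\sup_{B' \supset B} \fint_{B'} h \, \d y \leq (Mh)(x)$ for $x \in B$, where $M$ denotes the Hardy--Littlewood maximal function. Combining this with the reverse Hölder inequality above, a stopping-time decomposition of the truncated level set $\{M(H^2) > \lambda^2\} \cap B(0, N)$ together with Chebyshev produces, for every $\eta > 0$ and a suitable $A > 1$ depending only on $p_0$ and $C$, the good-$\lambda$ inequality
\begin{align*}
\bigl| \{M(H^2) > (A\lambda)^2\} \cap \{M(F^2 + G^2) \leq (\eta\lambda)^2\} \bigr| \leq C\,\eta^{p_0}\,\bigl|\{M(H^2) > \lambda^2\}\bigr|.
\end{align*}
Integrating against $p\lambda^{p-1}\,\d\lambda$, choosing $\eta$ small enough to absorb the resulting $\|M(H^2)^{1/2}\|_{\L^p}$-term (the truncation by $N$ ensures this absorbed quantity is a priori finite), letting $N \to \infty$, and invoking the $\L^{p/2}$-boundedness of $M$ (valid because $p > 2$) delivers the required estimate.

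The main obstacle is precisely the non-local supremum in \eqref{Eq: Weak reverse Hoelder inequality}: in the classical Shen scheme the right-hand side sees $f$ only via a fixed dilate of $B$, whereas here arbitrarily large enclosing balls enter. Dominating $\sup_{B' \supset B} \fint_{B'}$ pointwise by the Hardy--Littlewood maximal is what converts the non-local hypothesis into a form amenable to the standard good-$\lambda$ machinery, exactly as in \cite[Thm.~1.2]{Tolksdorf_nonlocal}; the passage to the vector-valued setting is transparent, since only the scalar quantities $F$, $G$, $H$ enter the level-set and stopping-time analysis.
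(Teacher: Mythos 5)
Your first step contains a genuine gap: the ``enriched'' weak reverse H\"older inequality for arbitrary $f$ does not follow from splitting $f = f\ind_{\iota B} + f\ind_{(\iota B)^c}$, and it is in fact false in general. The $\L^2$-boundedness of $T$ only controls $T(f\ind_{\iota B})$ in $\L^2(B;Y)$, not in $\L^{p_0}(B;Y)$, and no assumption on $T$ asserts any gain of integrability on locally supported data. Therefore you cannot combine the two pieces to bound $(\fint_B \|Tf\|_Y^{p_0}\,\d x)^{1/p_0}$ by an $\L^2$-type right-hand side. Concretely, take $d=1$, $X=Y=Z=\IC$, $\cC=\Id$, $T$ the Hilbert transform, $B=(-1,1)$, and $f = \eps^{-1/2}\ind_{[0,\eps]}$; then $\bigl(\fint_B \lvert Tf\rvert^{p_0}\,\d x\bigr)^{1/p_0}$ grows like $\eps^{1/p_0-1/2}\to\infty$ as $\eps\to 0$ (since $p_0>2$), while $\sup_{B'\supset B}\bigl(\fint_{B'}(\lvert Tf\rvert^2+\lvert f\rvert^2)\,\d x\bigr)^{1/2}$ stays bounded. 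The elementary inequality $(a+b)^{p_0}\leq 2^{p_0-1}(a^{p_0}+b^{p_0})$ is of no help, because the local piece is simply not in $\L^{p_0}$.

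The correct organization -- the one in \cite[Thm.~1.2]{Tolksdorf_nonlocal} and sketched in Section~\ref{Sec: A glimpse onto a non-local Lp-extrapolation theorem} of this paper -- keeps the decomposition of $f$ but inserts it at the \emph{level-set} stage, not at the reverse H\"older stage. One decomposes
\begin{align*}
 \lvert \{ x \in Q : M_{2 Q^*} (\| T f \|_Y^2) (x) > \alpha \} \rvert
 &\leq \lvert \{ x \in Q : M_{2 Q^*} (\| T (f \chi_{2 \iota Q^*}) \|_Y^2) (x) > \alpha / 4 \} \rvert \\
 &\qquad + \lvert \{ x \in Q : M_{2 Q^*} (\| T (f \chi_{\IR^d \setminus 2 \iota Q^*}) \|_Y^2) (x) > \alpha / 4 \} \rvert,
\end{align*}
and then applies two different tools: the weak type-$(1,1)$ estimate for the (localized) maximal operator combined with the $\L^2$-boundedness of $T$ handles the local piece, while the embedding $\L^{p_0/2}\hookrightarrow\L^{p_0/2,\infty}$, the $\L^{p_0/2}$-boundedness of the maximal operator, and \eqref{Eq: Weak reverse Hoelder inequality} handle the far piece (for which the vanishing condition is available). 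This yields the good-$\lambda$ key estimate~\eqref{Eq: Good lambda key} directly, without ever asserting an $\L^{p_0}$ bound for the full $Tf$. Your observation that $\sup_{B'\supset B}\fint_{B'} h$ is dominated pointwise by the Hardy--Littlewood maximal function on $B$ is correct and is indeed the device that tames the non-locality, but it does not rescue the first step; the split of $f$ must be moved to the level-set estimate. Once this reorganization is made, the remainder of your good-$\lambda$ outline (truncation for a priori finiteness, Chebyshev, integration in $\lambda$, absorption, $\L^{p/2}$-boundedness of $M$) is sound.
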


In our situation, we would like to choose $X = Y = Z := \ell^2 (\IC^d)$, $\cC := \Id$, and the operator $T$ as one of the operators defined in Observation~\ref{Obs: Square function estimate}. If all these operators would satisfy the assumptions of Theorem~\ref{Thm: Non-local, vector valued Shen} in a uniform manner, we could conclude the $\cR$-boundedness of this family in $\L^p$-spaces. However, there is one issue, namely, the resolvent operators $(\lambda + A)^{-1}$ are only defined on $\L^2_{\sigma} (\IR^d)$ and not on $\L^2 (\IR^d ; \IC^d)$. Clearly, one could try to replace $(\lambda + A)^{-1}$ by the operator $(\lambda + A)^{-1} \IP$, which is a bounded operator defined on all of $\L^2 (\IR^d ; \IC^d)$. However, as it was mentioned earlier, to verify~\eqref{Eq: Weak reverse Hoelder inequality} one needs Caccioppoli's inequality, cf.~Theorem~\ref{Thm: Non-local Caccioppoli}, and this inequality requires the right-hand side $f$ to be solenoidal. More precisely, the solenoidality was essentially used in the proof of Lemma~\ref{Lem: Non-local pressure estimate}. Being bound to right-hand sides in $\L^2_{\sigma} (\IR^d)$ we have to have a closer look onto the proof of Theorem~\ref{Thm: Non-local, vector valued Shen} that can be found in~\cite[Thm.~1.2]{Tolksdorf_nonlocal}. Notice that a similar analysis was performed in~\cite[Sect.~5]{Tolksdorf_convex} for the scalar valued case. \par
Throughout the proof of Theorem~\ref{Thm: Non-local, vector valued Shen} a function $f$ is fixed and for exactly this function the boundedness estimate
\begin{align*}
  \| T f \|_{\L^p (\IR^d ; Y)} \leq K \big( \| f \|_{\L^p (\IR^d ; X)} + \| \cC f \|_{\L^p (\IR^d ; Z)} \big)
\end{align*}
is proved. To establish this estimate a good-$\lambda$ argument is used. An analysis of this good-$\lambda$ argument reveals that the $\L^2$-boundedness of $T$ and $\cC$ as well as~\eqref{Eq: Weak reverse Hoelder inequality} are used exactly once, namely, in order to deduce an inequality of the form
\begin{align}
\label{Eq: Good lambda key}
\begin{aligned}
 &\lvert \{ x \in Q : M_{2 Q^*} (\| T f \|_Y^2) (x) > \alpha \} \rvert \leq \frac{C}{\alpha} \int_{2 \iota Q^*} \big(\| f \|_X^2 + \| \cC f \|_Z^2 \big) \; \d x \\
 &\qquad\qquad\qquad\qquad+ \frac{C \lvert Q \rvert}{\alpha^{p_0 / 2}} \bigg\{ \sup_{Q^{\prime} \supset 2 Q^*} \bigg( \frac{1}{\lvert Q^{\prime} \rvert} \int_{Q^{\prime}} \big( \| T f \|_Y^2 + \| f \|_X^2 + \| \cC f \|_Z^2 \big) \; \d x \bigg)^{\frac{1}{2}} \bigg\}^{p_0},
\end{aligned}
\end{align}
cf.\@ the proof of Claim~3 in~\cite[Thm.~1.2]{Tolksdorf_convex}. Here, $\alpha > 0$ is arbitrary, $Q$ is a cube in $\IR^d$, $Q^*$ is its dyadic ``parent'', i.e., $Q$ arises from $Q^*$ by bisecting its sides, and $M_{2 Q^*}$ is the localized maximal operator
\begin{align*}
 M_{2 Q^*} g (x) := \sup_{\substack{x \in R \\ R \subset 2 Q^*}}\frac{1}{\lvert R \rvert} \int_R \lvert g \rvert \; \d y \qquad (x \in 2 Q^*),
\end{align*}
where in the supremum $R$ denotes a cube in $\IR^d$. To derive~\eqref{Eq: Good lambda key} from~\eqref{Eq: Weak reverse Hoelder inequality} and the $\L^2$-boundedness of $T$ and $\cC$, notice that~\eqref{Eq: Weak reverse Hoelder inequality} can equivalently be formulated with cubes instead of balls. Then, $f$ is decomposed as $f = f \chi_{2 \iota Q^*} + f \chi_{\IR^d \setminus 2 \iota Q^*}$, where $\chi$ denotes the characteristic function of a set. This decomposition is used on the left-hand side of~\eqref{Eq: Good lambda key} to estimate
\begin{align}
\label{Eq: Decomposition by sharp cutoff}
\begin{aligned}
 \lvert \{ x \in Q : M_{2 Q^*} (\| T f \|_Y^2) (x) > \alpha \} \rvert &\leq \lvert \{ x \in Q : M_{2 Q^*} (\| T f \chi_{2 \iota Q^*} \|_Y^2) (x) > \alpha / 4 \} \rvert \\
 &\qquad + \lvert \{ x \in Q : M_{2 Q^*} (\| T f \chi_{\IR^d \setminus 2 \iota Q^*} \|_Y^2) (x) > \alpha / 4 \} \rvert.
\end{aligned}
\end{align}
The first term on the right-hand side is controlled by the weak type-$(1 , 1)$ estimate of the localized maximal operator and the $\L^2$-boundedness of $T$, yielding the first term on the right-hand side of~\eqref{Eq: Good lambda key}. The second term on the right-hand side is controlled by the embedding $\L^{p_0 / 2} \hookrightarrow \L^{p_0 / 2 , \infty}$ and the $\L^{p_0 / 2}$-boundedness of the localized maximal operator followed by~\eqref{Eq: Weak reverse Hoelder inequality} and the $\L^2$-boundedness of $T$ and $\cC$ yielding the remaining terms on the right-hand side of~\eqref{Eq: Good lambda key}, cf.\@ the proof of Claim~3 in~\cite[Thm.~1.2]{Tolksdorf_convex}. \par
Essentially, the only thing that happened in~\eqref{Eq: Decomposition by sharp cutoff} was that $T f$ was decomposed by means of
\begin{align}
\label{Eq: Decomposition of Tf}
 T f =  T f \chi_{2 \iota Q^*} +  T f \chi_{\IR^d \setminus 2 \iota Q^*}.
\end{align}
We would like to emphasize here that this decomposition of $T f$ is induced by the linearity of $T$ and a decomposition of $f$. Clearly, one could imagine that other suitable decompositions of $T f$ into a sum of two functions exist and that these might not have anything to do with a decomposition of $f$. Taking this into account in the formulation of the $\L^p$-extrapolation theorem might yield a more flexible result. This could be an advantage if a certain structure of $f$ (such as solenoidality) is eminent and which is destroyed by multiplication by characteristic functions. This indicates the need of a formulation of Shen's $\L^p$-extrapolation theorem that does not rely on a particular decomposition of $T f$ and is presented in the following. \par
To this end, we say that $Q^*$ is the parent of a cube $Q \subset \IR^d$ if $Q$ arises from $Q^*$ by bisecting its sides. Moreover, for $x_0 \in \IR^d$ and $r > 0$ let $Q(x_0 , r)$ denote the cube in $\IR^d$ with center $x_0$ and $\diam(Q (x_0 , r)) = r$. Finally, for a number $\alpha > 0$ denote by $\alpha Q$ the cube $Q (x_0 , \alpha r)$. In the following formulation of the $\L^p$-extrapolation theorem, we simply replace the $\L^2$-boundedness of $T$ and $\cC$ together with~\eqref{Eq: Weak reverse Hoelder inequality} by the assumption that~\eqref{Eq: Good lambda key} is valid.

\begin{theorem}
\label{Thm: Modified Shen whole space}
Let $X$, $Y$, and $Z$ be Banach spaces. Let further $2 < p < p_0$, $f \in \L^2 (\IR^d ; X) \cap \L^p (\IR^d ; X)$, and let $T$ be an operator (not necessarily linear) such that $T(f)$ is defined and contained in $\L^2 (\IR^d ; Y)$. \par
Suppose that there exist constants $\iota > 1$ and $C > 0$ and an operator $\cC$ (not necessarily linear) such that $\cC(f)$ is defined lies in $\L^2 (\IR^d ; Z)$ such that for all $\alpha > 0$, all $Q = Q(x_0 , r)$ with $r > 0$ and $x_0 \in \IR^d$, and all parents $Q^*$ of $Q$ the estimate
\begin{align}
\label{Eq: Generalized weak reverse Hoelder inequality IR^d}
 \begin{aligned}
 &\lvert \{ x \in Q : M_{2 Q^*} (\| T (f) \|_Y^2) (x) > \alpha \} \rvert \leq \frac{C}{\alpha} \int_{2 \iota Q^*} \big(\| f \|_X^2 + \| \cC (f) \|_Z^2 \big) \; \d x \\
 &\qquad\qquad\qquad\qquad+ \frac{C \lvert Q \rvert}{\alpha^{p_0 / 2}} \bigg\{ \sup_{Q^{\prime} \supset 2 Q^*} \bigg( \frac{1}{\lvert Q^{\prime} \rvert} \int_{Q^{\prime}} \big( \| T (f) \|_Y^2 + \| f \|_X^2 + \| \cC (f) \|_Z^2 \big) \; \d x \bigg)^{\frac{1}{2}} \bigg\}^{p_0},
 \end{aligned}
\end{align}
holds. Here the supremum runs over all cubes $Q^{\prime}$ containing $2 Q^*$. \par
Then there exists a constant $K > 0$ depending on $d$, $p$, $p_0$, $\iota$, and $C$ such that
\begin{align*}
 \| T(f) \|_{\L^p (\IR^d ; Y)} \leq K \big( \| f \|_{\L^p (\IR^d ; X)} + \| \cC (f) \|_{\L^p (\IR^d ; Z)} \big).
\end{align*}
\end{theorem}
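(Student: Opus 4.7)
The plan is to follow the good-$\lambda$ argument used in the proof of~\cite[Thm.~1.2]{Tolksdorf_nonlocal}, but to start directly from the assumed inequality~\eqref{Eq: Generalized weak reverse Hoelder inequality IR^d} rather than to derive it from~\eqref{Eq: Weak reverse Hoelder inequality} and the $\L^2$-boundedness of $T$ and $\cC$. As emphasized in the discussion preceding the theorem, the only role of those ingredients in the referenced proof is to furnish precisely the inequality~\eqref{Eq: Generalized weak reverse Hoelder inequality IR^d}. Once this is at our disposal, the remainder of the argument uses neither the linearity of $T$ nor any $\L^2$-bound on $T(f)$ and carries over with cosmetic modifications. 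The first step is therefore to reduce the conclusion to an $\L^{p/2}$-estimate of $M(\|T(f)\|_Y^2)$, where $M$ denotes the (global) Hardy--Littlewood maximal operator, using the pointwise bound $\|T(f)(x)\|_Y^2 \leq M(\|T(f)\|_Y^2)(x)$ which holds for almost every $x$ by Lebesgue differentiation.

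Using the layer-cake representation
\begin{align*}
 \|M(\|T(f)\|_Y^2)\|_{\L^{p/2} (\IR^d)}^{p/2} = \frac{p}{2} \int_0^{\infty} \alpha^{p/2 - 1} \lvert \{ x \in \IR^d : M(\|T(f)\|_Y^2)(x) > \alpha \} \rvert \, \d \alpha,
\end{align*}
the task reduces to a good-$\lambda$ estimate on the super-level sets. Setting $g := \|f\|_X^2 + \|\cC(f)\|_Z^2$, for $\alpha$ above a suitable threshold I would perform a Calder\'on--Zygmund decomposition of $g$ at height $\alpha$ to obtain a disjoint family of maximal dyadic cubes $\{Q_j\}$ with $\alpha < \fint_{Q_j} g \, \d x \leq 2^d \alpha$ and $g \leq \alpha$ almost everywhere on $\IR^d \setminus \bigcup_j Q_j$; by maximality, the dyadic parents $Q_j^*$ satisfy $\fint_{Q_j^*} g \, \d x \leq \alpha$. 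Applying~\eqref{Eq: Generalized weak reverse Hoelder inequality IR^d} to each pair $(Q_j , Q_j^*)$ with level parameter a fixed multiple of $\alpha$ and summing in $j$ then yields a good-$\lambda$ estimate of the shape
\begin{align*}
 \lvert \{ x \in \IR^d : M(\|T(f)\|_Y^2)(x) > c \alpha \} \rvert \leq \frac{C}{\alpha} \int_{\{ g > c^{\prime} \alpha \}} g \, \d x + C \alpha^{- p_0 / 2} \sum_j \lvert Q_j \rvert \cT_j^{p_0},
\end{align*}
where $\cT_j$ denotes the non-local supremum appearing on the right-hand side of~\eqref{Eq: Generalized weak reverse Hoelder inequality IR^d} associated to the pair $(Q_j, Q_j^*)$.

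Multiplying by $\alpha^{p/2 - 1}$ and integrating in $\alpha \in (0, \infty)$, Fubini's theorem converts the first summand into a constant multiple of $\|g\|_{\L^{p/2}(\IR^d)}^{p/2}$, which is bounded by $(\|f\|_{\L^p(\IR^d ; X)}^2 + \|\cC(f)\|_{\L^p(\IR^d ; Z)}^2)^{p/2}$. For the tail term, the standard device is to majorize $\cT_j^2 \leq C M(\|T(f)\|_Y^2 + g)(x)$ for every $x \in Q_j$, to use $p_0 > p$ together with the $\L^{p_0 / 2}$-boundedness of $M$ to trade the $\alpha^{- p_0 / 2}$ decay for the higher integrability exponent, and to close the estimate via another layer-cake computation. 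This produces a small constant (controlled by a power of $p_0 - p$) times $\|M(\|T(f)\|_Y^2)\|_{\L^{p/2}(\IR^d)}^{p/2}$ plus a further controlled contribution of $\|g\|_{\L^{p/2}(\IR^d)}^{p/2}$, and permits absorption into the left-hand side. The main obstacle is that this absorption is only legitimate once $\|M(\|T(f)\|_Y^2)\|_{\L^{p/2}(\IR^d)}$ is a priori known to be finite; I would resolve this by replacing $M(\|T(f)\|_Y^2)$ throughout by its truncation $\min(M(\|T(f)\|_Y^2) , N)$, running the entire argument with $N$-independent constants, and passing to the limit $N \to \infty$ via monotone convergence. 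A secondary point is that the integration below the Calder\'on--Zygmund threshold must be handled separately by the plain $\L^{p/2}$-boundedness of $M$ applied to $g$, yielding an additional contribution already of the required form.
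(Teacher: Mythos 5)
The paper does not spell out a proof of Theorem~\ref{Thm: Modified Shen whole space}; it explains in the surrounding discussion that the argument is exactly the good-$\lambda$ argument from~\cite[Thm.~1.2]{Tolksdorf_nonlocal} (cf.\@ also~\cite[Thm.~1.2]{Tolksdorf_convex} and~\cite[Thm.~3.1]{Shen}), with the single modification that the intermediate inequality~\eqref{Eq: Good lambda key} is now assumed rather than derived. Your overall strategy --- the layer-cake reduction to an $\L^{p/2}$-estimate of $M(\|T(f)\|_Y^2)$, the truncation at level $N$ to justify the absorption, and the use of $p_0 > p$ to produce a small absorbable constant --- matches that intended argument. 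However, there is a genuine gap in the decomposition step that breaks the argument as you have set it up.

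You perform the Calder\'on--Zygmund decomposition on $g := \|f\|_X^2 + \|\cC(f)\|_Z^2$ at height $\alpha$, obtaining cubes $\{Q_j\}$, and then claim that applying~\eqref{Eq: Generalized weak reverse Hoelder inequality IR^d} to each pair $(Q_j, Q_j^*)$ and summing gives a good-$\lambda$ bound for $\lvert \{ M(\|T(f)\|_Y^2) > c\alpha \} \rvert$. This cannot work. The cubes $Q_j$ cover the region where $g$ is large, but they say nothing about where $M(\|T(f)\|_Y^2)$ is large; the super-level set $\{ M(\|T(f)\|_Y^2) > c\alpha \}$ is a priori unrelated to $\bigcup_j Q_j$, and the sum over $j$ of the estimates from~\eqref{Eq: Generalized weak reverse Hoelder inequality IR^d} only controls a subset of that level set. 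In particular, nothing in your argument controls $\lvert \{ M(\|T(f)\|_Y^2) > c\alpha \} \setminus \bigcup_j Q_j \rvert$.

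The decomposition must instead be taken on the level set of $M(\|T(f)\|_Y^2)$ (or, equivalently well, of $M(\|T(f)\|_Y^2 + g)$), as in the Caffarelli--Peral/Shen argument. This is not merely cosmetic: the good-$\lambda$ mechanism requires two ingredients that only this choice provides. First, the passage from the localized maximal operator $M_{2 Q_j^*}$ appearing in~\eqref{Eq: Generalized weak reverse Hoelder inequality IR^d} to the global $M$ appearing in the layer-cake formula relies on the stopping-time property of maximal dyadic cubes inside the level set of $M(\|T(f)\|_Y^2)$: the parent $Q_j^*$ contains a point where $M(\|T(f)\|_Y^2) \leq \alpha$, which lets one show that for $x \in Q_j$ the global maximal function is controlled by $\max(M_{2Q_j^*}(\|T(f)\|_Y^2)(x), C\alpha)$. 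A decomposition of $g$ offers no such control on $\|T(f)\|_Y^2$. Second, the supremum $\sup_{Q' \supset 2 Q_j^*} ( \fint_{Q'} ( \|T(f)\|_Y^2 + \|f\|_X^2 + \|\cC(f)\|_Z^2 ) )^{1/2}$ in the second term of~\eqref{Eq: Generalized weak reverse Hoelder inequality IR^d} involves $\|T(f)\|_Y^2$, and the stopping time must bound its average on $Q_j^*$ (and larger cubes) by $C\alpha$; decomposing $g$ alone only controls the average of $g$. Without that control the factor $\alpha^{-p_0/2}$ cannot be traded against the power of the supremum to produce the small constant $C A^{-p_0/2}$ (after replacing $\alpha$ by $A\alpha$ in the hypothesis) on which the absorption rests. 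Once you switch the decomposition to the level set of $M(\|T(f)\|_Y^2)$, the rest of your outline --- the Fubini computation, the $\L^{p_0/2}$-boundedness of $M$, the truncation to ensure a priori finiteness, and the separate treatment of $\alpha$ below the Calder\'on--Zygmund threshold --- is the standard machinery and goes through.
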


\section{Proofs of Theorems~\ref{Thm: Resolvent},~\ref{Thm: Gradient}, and~\ref{Thm: Max Reg}}

\noindent This section is dedicated to the proofs of Theorems~\ref{Thm: Resolvent},~\ref{Thm: Gradient}, and~\ref{Thm: Max Reg}. Notice that by virtue of Remark~\ref{Rem: R-boundedness}~\eqref{Item: R-boundedness implies boundedness} and Definition~\ref{Def: Sectorial and R-sectorial} the statement of Theorem~\ref{Thm: Resolvent} is a mere corollary of Theorem~\ref{Thm: Max Reg}. Thus, we will focus only on the proofs of Theorems~\ref{Thm: Gradient} and~\ref{Thm: Max Reg}. \par
Before we delve into the proofs of these results, we have another look onto a Caccioppoli inequality which is similar to the one in Lemma~\ref{Lem: Caccioppoli with delta-pressure}.

\begin{lemma}
\label{Lem: Caccioppoli variable pressure}
Let $\mu$ satisfy Assumption~\ref{Ass: Coefficients} for some constants $\mu_{\bullet} , \mu^{\bullet} > 0$. Let $\omega \in (\pi / 2 , \pi)$ be the number provided by Lemma~\ref{Lem: Caccioppoli with delta-pressure}. Then for all $\theta \in (0 , \omega)$ there exists $C > 0$ such that for all $\lambda \in \S_{\theta}$ and all solutions $u \in \H^1 (B(x_0 , 2 r))$ and $\phi \in \L^2 (B(x_0 , 2 r))$ (in the sense of distributions) to
\begin{align*}
 \left\{ \begin{aligned}
  \lambda u - \divergence \mu \nabla u + \nabla \phi &= 0 && \text{in } B(x_0 , 2 r), \\
  \divergence (u) &= 0 && \text{in } B(x_0 , 2 r)
 \end{aligned} \right.
\end{align*}
satisfy for all $\eta \in \C_c^{\infty} (B(x_0 , 2 r))$ with $\eta \equiv 1$ in $B(x_0 , r)$, $0 \leq \eta \leq 1$, and $\| \nabla \eta \|_{\L^{\infty}} \leq 2 / r$ and for all $c_1 \in \IC$ and $c_2 \in \IC^d$ the estimate
\begin{align*}
 &\lvert \lambda \rvert \int_{B(x_0 , 2 r)} \lvert u \eta \rvert^2 \, \d x + \int_{B(x_0 , 2 r)} \lvert \nabla [(u + c_2) \eta] \rvert^2 \, \d x \leq \frac{C}{r^2} \int_{B(x_0 , 2 r)} \lvert u + c_2 \rvert^2 \, \d x \\
 &\quad + \lvert c_2 \rvert \lvert \lambda \rvert \int_{B(x_0 , 2 r)} \lvert u \rvert \eta^2 \, \d x + \frac{4}{r} \bigg(\int_{B(x_0 , 2 r) \setminus B(x_0 , r)} \lvert \phi - c_1 \rvert^2 \, \d x \bigg)^{\frac{1}{2}} \bigg( \int_{B(x_0 , 2 r)} \lvert (u + c_2) \eta \rvert^2 \, \d x \bigg)^{\frac{1}{2}}.
\end{align*}
The constant $C > 0$ depends only on $\mu_{\bullet}$, $\mu^{\bullet}$, $d$, and $\theta$.
\end{lemma}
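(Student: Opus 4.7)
The plan is to mimic the proof of Lemma~\ref{Lem: Caccioppoli with delta-pressure} applied to the homogeneous problem on the ball $B(x_0, 2r)$, with one crucial modification: the pressure term will not be absorbed into the left-hand side via Young's inequality, and will instead be kept on the right-hand side through a direct Cauchy--Schwarz estimate. Since $\eta \in \C_c^{\infty}(B(x_0, 2r))$, the vector field $v := \eta^2(u + c_2)$ is an admissible test function for the equation posed on $B(x_0, 2r)$. Testing and using $\divergence(u) = 0$ yields, as in~\eqref{Eq: Caccioppoli testing},
\begin{align*}
 \lambda \int |u|^2 \eta^2 \,\d x + \lambda \overline{c_2} \int u \eta^2 \,\d x + \int \mu^{ij}_{\alpha\beta}\partial_{\beta} u_j \,\partial_{\alpha}[(\overline{u_i + c_i})\eta^2]\,\d x = \int \phi\, \divergence(\eta^2(\overline{u+c_2}))\,\d x.
\end{align*}
Since $\divergence(u) = 0$, the pressure integral reduces to $2 \int (\phi - c_1)\, \eta\, \nabla\eta \cdot (\overline{u+c_2})\,\d x$, where the constant $c_1 \in \IC$ may be subtracted thanks to the compact support of $\eta$, and where the integrand is supported in the annulus $B(x_0, 2r) \setminus B(x_0, r)$ because $\eta \equiv 1$ on $B(x_0, r)$.

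Next, I would rewrite the second-order form exactly as in the proof of Lemma~\ref{Lem: Caccioppoli with delta-pressure}, as $\mathrm{I} - \mathrm{II} + \mathrm{III} - \mathrm{IV}$, with $\mathrm{I} := \int \mu^{ij}_{\alpha\beta} \partial_{\beta}[(u_j + c_j)\eta]\, \partial_{\alpha}[(\overline{u_i + c_i})\eta]\,\d x$ the principal part, and $\mathrm{II},\mathrm{III},\mathrm{IV}$ each carrying a factor $\nabla\eta$. Assumption~\ref{Ass: Coefficients} gives $\mathrm{I} \in \overline{\S_{\pi-\omega}}$ for the same $\omega \in (\pi/2, \pi)$ provided by Lemma~\ref{Lem: Caccioppoli with delta-pressure}, and for $\theta \in (0, \omega)$ and $\lambda \in \S_{\theta}$ the inequality $\theta + (\pi-\omega) < \pi$ yields the coercivity
\begin{align*}
 \lvert\lambda\rvert \int |u\eta|^2 \,\d x + \int |\nabla[(u+c_2)\eta]|^2 \,\d x \le C \, \Bigl| \lambda \int |u|^2 \eta^2\,\d x + \mathrm{I}\Bigr|,
\end{align*}
with $C$ depending only on $\mu_{\bullet}, \mu^{\bullet}, d, \theta$.

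It remains to estimate the right-hand side of the tested identity. The perturbative terms $\mathrm{II}, \mathrm{III}, \mathrm{IV}$ are handled by Cauchy--Schwarz and Young's inequality verbatim as in~\eqref{Eq: Perturbative term 1} and~\eqref{Eq: Perturbative term 2}, contributing an absorbable fraction of $\int |\nabla[(u+c_2)\eta]|^2\,\d x$ plus a term of size $\frac{C}{r^2} \int_{B(x_0, 2r)} |u + c_2|^2 \,\d x$; the term $\lambda \overline{c_2} \int u \eta^2\,\d x$ is bounded in modulus by $\lvert c_2 \rvert \lvert\lambda\rvert \int |u|\eta^2\,\d x$. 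The essential departure from Lemma~\ref{Lem: Caccioppoli with delta-pressure} is the treatment of the pressure: since no right-hand side is present to produce an $\L^2$-bound on $\phi - c_1$, we cannot usefully split via Young's inequality with a small parameter $\delta$. Instead, a direct application of Cauchy--Schwarz together with $\|\nabla\eta\|_{\L^{\infty}} \le 2/r$ and the localization of $\nabla\eta$ to the annulus yields
\begin{align*}
 2\Bigl| \int (\phi - c_1)\, \eta \, \nabla\eta \cdot (\overline{u+c_2})\,\d x\Bigr| \le \frac{4}{r} \|\phi - c_1\|_{\L^2(B(x_0,2r) \setminus B(x_0, r))}\,\|(u+c_2)\eta\|_{\L^2},
\end{align*}
which is exactly the form demanded by the statement. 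I do not anticipate any genuine obstacle beyond careful bookkeeping: the main care needed is to verify that the absorption of $\mathrm{II}, \mathrm{III}, \mathrm{IV}$ closes on the left-hand side and that all constants depend only on the quantities listed.
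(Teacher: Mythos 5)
Your proposal matches the paper's argument essentially verbatim: test the local distributional equation on $B(x_0,2r)$ with $v=\eta^2(u+c_2)$, split the principal form into $\mathrm{I}-\mathrm{II}+\mathrm{III}-\mathrm{IV}$, use ellipticity and the sector condition to get coercivity, absorb $\mathrm{II},\mathrm{III},\mathrm{IV}$, bound the cross term by $\lvert c_2\rvert\lvert\lambda\rvert\int\lvert u\rvert\eta^2$, and --- crucially --- replace the Young step on the pressure term by a bare Cauchy--Schwarz estimate localised to the annulus where $\nabla\eta$ lives, after subtracting the free constant $c_1$. The paper's own proof literally says it is Lemma~\ref{Lem: Caccioppoli with delta-pressure} for $k=0$ with only the pressure step changed, and that is precisely your plan, so the approaches coincide.
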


\begin{proof}
The proof is literally the same as the proof of Lemma~\ref{Lem: Caccioppoli with delta-pressure} in the case $k = 0$. The only difference is how one estimates the arising pressure term in~\eqref{Eq: Pressure Caccioppoli}. Notice that in order to derive~\eqref{Eq: Pressure Caccioppoli} we subtracted the constant $\phi_{\cC_{k + 1}}$ of $\phi$ but that it was possible to subtract any other constant as well. Thus, it is no problem to replace $\phi_{\cC_{k + 1}}$ by $c_1$ in~\eqref{Eq: Pressure Caccioppoli}. This term then reads
\begin{align*}
2 \Big\lvert \int_{\IR^d} (\phi - c_1) \eta \nabla \eta \cdot (\overline{u} + \overline{c_2}) \, \d x \Big\rvert.
\end{align*}
Now, by the properties of $\eta$ and by H\"older's inequality we find that
\begin{align*}
 &\Big\lvert \int_{\IR^d} (\phi - c_1) \eta \nabla \eta \cdot (\overline{u} + \overline{c_2}) \, \d x \Big\rvert \\
 &\qquad \leq \frac{2}{r} \bigg(\int_{B(x_0 , 2 r) \setminus B(x_0 , r)} \lvert \phi - c_1 \rvert^2 \, \d x \bigg)^{\frac{1}{2}} \bigg( \int_{B(x_0 , 2 r)} \lvert (u + c_2) \eta \rvert^2 \, \d x \bigg)^{\frac{1}{2}}.
\end{align*}
This readily concludes the proof.
\end{proof}

To proceed we introduce another sesquilinear form, which is connected to the Stokes problem in a ball but with Neumann boundary conditions. For this purpose, let $B \subset \IR^d$ denote a ball and let
\begin{align*}
 \cL^2_{\sigma} (B) := \{ f \in \L^2 (B ; \IC^d) : \divergence(f) = 0 \text{ in the sense of distributions} \}
\end{align*}
and let
\begin{align*}
 \cH^1_{\sigma} (B) := \{ f \in \H^1 (B ; \IC^d) : \divergence(f) = 0 \}.
\end{align*}
Now, define the sesquilinear form
\begin{align*}
 \fb_B : \cH^1_{\sigma} (B) \times \cH^1_{\sigma} (B) \to \IC, \quad (u , v) \mapsto \int_{\IR^d} \mu^{i j}_{\alpha \beta} \partial_{\beta} u_j \overline{\partial_{\alpha} v_i} \; \d x.
\end{align*}
We abuse the notation and denote the same sesquilinear form but with domain $\H^1 (B ; \IC^d) \times \H^1 (B ; \IC^d)$ again by $\fb_B$.

\begin{remark}
Let $\theta \in (0 , \omega)$, $\lambda \in \S_{\theta}$, and $B \subset \IR^d$ be a ball. Let further $f \in \cL^2_{\sigma} (B)$ and $F \in \L^2 (B ; \IC^{d \times d})$ and let $u \in \cH^1_{\sigma} (B)$ be a solution to
\begin{align}
\label{Eq: Solenoidal distributional Neumann}
 \lambda \int_B u \cdot \overline{v} \, \d x + \fb_B (u , v) = \int_B f \cdot \overline{v} \, \d x - \int_B F_{\alpha \beta} \overline{\partial_{\alpha} v_{\beta}} \, \d x \quad (v \in \cH^1_{\sigma} (B)).
\end{align}
First of all, testing by $u$ implies the existence of a constant $C > 0$ depending only on $d$, $\theta$, $\mu_{\bullet}$, and $\mu^{\bullet}$ such that
\begin{align}
\label{Eq: Resolvent Neumann}
 \lvert \lambda \rvert \| u \|_{\cL^2_{\sigma} (B)} + \lvert \lambda \rvert^{\frac{1}{2}} \| \nabla u \|_{\L^2 (B)} \leq C \big( \| f \|_{\cL^2_{\sigma} (B)} + \lvert \lambda \rvert^{\frac{1}{2}} \| F \|_{\L^2 (B)} \big).
\end{align}
Second, since $\C_{c , \sigma}^{\infty} (B) \subset \cH^1_{\sigma} (B)$ there exists by virtue of~\cite[Lem.~II.2.2.2]{Sohr} a pressure function $\vartheta \in \L^2 (B)$, which is unique up the the addition of constants, such that
\begin{align}
\label{Eq: Distributional Neumann}
 \lambda \int_B u \cdot \overline{v} \, \d x + \fb_B (u , v) - \int_B \vartheta \, \overline{\divergence(v)} \, \d x = \int_B f \cdot \overline{v} \, \d x - \int_B F_{\alpha \beta} \overline{\partial_{\alpha} v_{\beta}} \, \d x \quad (v \in \H^1_0 (B ; \IC^d)).
\end{align}
Now, we show in the following that one can even find a constant $c \in \IC$ such that with $\phi := \vartheta + c$ one has
\begin{align}
\label{Eq: Variational Neumann}
 \lambda \int_B u \cdot \overline{v} \, \d x + \fb_B (u , v) - \int_B \phi \, \overline{\divergence(v)} \, \d x = \int_B f \cdot \overline{v} \, \d x - \int_B F_{\alpha \beta} \overline{\partial_{\alpha} v_{\beta}} \, \d x \quad (v \in \H^1 (B ; \IC^d)).
\end{align}

\indent To prove the existence of $c$ we repeat the argument of~\cite[Pf.~of~Thm.~6.8]{Mitrea_Monniaux_Wright} in the case of constant coefficients. Let $\varphi_0 \in \H^{1 / 2} (\partial B ; \IC^d)$ with
\begin{align*}
 \int_{\partial B} \frac{x - x_0}{\lvert x - x_0 \rvert} \cdot \varphi_0 \, \d \sigma (x) = 1,
\end{align*}
where $x_0$ denotes the center of $B$ and $\sigma$ its surface measure. Let $E \varphi_0 \in \H^1 (B  ;\IC^d)$ denote an extension of $\varphi_0$ and define
\begin{align*}
 c := - \int_B f \cdot \overline{E \varphi_0} \, \d x + \int_B F_{\alpha \beta} \overline{\partial_{\alpha} (E \varphi_0)_{\beta}} \, \d x + \lambda \int_B u \cdot \overline{E \varphi_0} \, \d x + \fb_B (u , E \varphi_0) - \int_B \vartheta \, \overline{\divergence(E \varphi_0)} \, \d x.
\end{align*}
Now, for $v \in \H^1 (B ; \IC^d)$ we find with
\begin{align*}
 \eta := \int_{\partial B} \frac{x - x_0}{\lvert x - x_0 \rvert} \cdot v|_{\partial B} \, \d \sigma (x).
\end{align*}
that
\begin{align*}
 &\lambda \int_B u \cdot \overline{v} \, \d x + \fb_B (u , v) - \int_B (\vartheta + c) \, \overline{\divergence(v)} \, \d x \\
 &\qquad= \lambda \int_B u \cdot \overline{v} \, \d x + \fb_B (u , v) - \int_B \vartheta \, \overline{\divergence(v)} \, \d x - c \eta \\
 &\qquad = \lambda \int_B u \cdot (\overline{v - \eta E \varphi_0}) \, \d x + \fb_B (u , v - \eta E \varphi_0) - \int_B \vartheta \, \overline{\divergence(v - \eta E \varphi_0)} \, \d x \\
 &\qquad \qquad - \int_B f \cdot \overline{(v - \eta E \varphi_0)} \, \d x + \int_B F_{\alpha \beta} \overline{\partial_{\alpha} (v - \eta E \varphi_0)_{\beta}} \, \d x \\
 &\qquad\qquad + \int_B f \cdot \overline{v} \, \d x - \int_B F_{\alpha \beta} \overline{\partial_{\alpha} v_{\beta}} \, \d x.
\end{align*}
By virtue of~\cite[Lem.~2.3]{Mitrea_Monniaux_Wright} the trace operator
\begin{align*}
 \tr : \cH^1_{\sigma} (B) \to \bigg\{ g \in \H^{\frac{1}{2}} (\partial B ; \IC^d) : \int_{\partial B} \frac{x - x_0}{\lvert x - x_0 \rvert} \cdot g \, \d \sigma (x) = 0 \bigg\} =: \H^{\frac{1}{2}}_n (\partial B).
\end{align*}
is onto. By construction, we have that $v - \eta E \varphi_0 \in \H^{1 / 2}_n (\partial B)$. Thus, there exists $\psi \in \cH^1_{\sigma} (B)$ with $\tr(\psi) = v - \eta E \varphi_0$. In particular, we have that $v - \eta E \varphi_0 - \psi \in \H^1_0 (\Omega ; \IC^d)$. Thus, employing first~\eqref{Eq: Distributional Neumann} and then~\eqref{Eq: Solenoidal distributional Neumann} delivers
\begin{align*}
 &\lambda \int_B u \cdot (\overline{v - \eta E \varphi_0}) \, \d x + \fb_B (u , v - \eta E \varphi_0) - \int_B \vartheta \, \overline{\divergence(v - \eta E \varphi_0)} \, \d x \\
 &\qquad \qquad - \int_B f \cdot \overline{(v - \eta E \varphi_0)} \, \d x + \int_B F_{\alpha \beta} \overline{\partial_{\alpha} (v - \eta E \varphi_0)_{\beta}} \, \d x \\
 &= \lambda \int_B u \cdot \overline{\psi} \, \d x + \fb_B (u , \psi) - \int_B f \cdot \overline{\psi} \, \d x + \int_B F_{\alpha \beta} \overline{\partial_{\alpha} \psi_{\beta}} \, \d x \\
 &= 0.
\end{align*}
This establishes~\eqref{Eq: Variational Neumann}. Having~\eqref{Eq: Variational Neumann} at our disposal, we can also derive a bound on the pressure function $\phi$. Indeed, testing~\eqref{Eq: Variational Neumann} with $v := \nabla \Delta_D^{-1} \phi$ (where $\Delta_D$ denotes the Dirichlet Laplacian on $B$), using that $u$ and $f$ are orthogonal to $v$, and using~\eqref{Eq: Resolvent Neumann} delivers with a constant $C > 0$ depending only on $d$, $\theta$, $\mu_{\bullet}$, and $\mu^{\bullet}$
\begin{align}
\label{Eq: Pressure Neumann}
 \lvert \lambda \rvert^{\frac{1}{2}} \| \phi \|_{\L^2 (B)} \leq C \big( \| f \|_{\cL^2_{\sigma} (B)} + \lvert \lambda \rvert^{\frac{1}{2}} \| F \|_{\L^2 (B)} \big).
\end{align}
\end{remark}

As described in Section~\ref{Sec: A glimpse onto a non-local Lp-extrapolation theorem} we want to study $u := (\lambda + A)^{-1} f$ and - in order to verify~\eqref{Eq: Generalized weak reverse Hoelder inequality IR^d} - we want to find suitable decompositions of $u$ into $u = u_1 + u_2$ which should be valid in $2 Q^*$ for a given cube $Q$. This is done in the following lemma. The argument to arrive at the desired estimate is subtle. We will apply Lemma~\ref{Lem: Caccioppoli variable pressure} with $c_2 = 0$ and use that we left the term on the right-hand side involving the pressure, i.e.,
\begin{align*}
 \frac{4}{r} \bigg(\int_{2 B \setminus B} \lvert \phi - c_1 \rvert^2 \, \d x \bigg)^{\frac{1}{2}} \bigg( \int_{2 B} \lvert u \eta \rvert^2 \, \d x \bigg)^{\frac{1}{2}}
\end{align*}
in a product structure. In this situation, one can still decide whether one estimates the term by Young's inequality as
\begin{align*}
 \frac{4}{r} \bigg(\int_{2 B \setminus B} \lvert \phi - c_1 \rvert^2 \, \d x \bigg)^{\frac{1}{2}} \bigg( \int_{2 B} \lvert u \eta \rvert^2 \, \d x \bigg)^{\frac{1}{2}} \leq \frac{1}{2} \int_{2 B \setminus B} \lvert \phi - c_1 \rvert^2 \, \d x + \frac{8}{r^2} \int_{2 B} \lvert u \eta \rvert^2 \, \d x
\end{align*}
or for some suitable $\eps > 0$ as
\begin{align*}
 \frac{4}{r} \bigg(\int_{2 B \setminus B} \lvert \phi - c_1 \rvert^2 \, \d x \bigg)^{\frac{1}{2}} \bigg( \int_{2 B} \lvert u \eta \rvert^2 \, \d x \bigg)^{\frac{1}{2}} \leq \frac{8}{\eps r^2 \lvert \lambda \rvert} \int_{2 B \setminus B} \lvert \phi - c_1 \rvert^2 \, \d x + \frac{\lvert \lambda \rvert \eps}{2} \int_{2 B} \lvert u \eta \rvert^2 \, \d x.
\end{align*}
In the first situation, one leaves the term involving $u$ on the right-hand side and in the second situation, one can absorb this term onto the left-hand side. Depending on the particular situation, we will need to decide differently.

\begin{lemma}
\label{Lem: Preparation of reverse Holder}
Let $\mu$ satisfy Assumption~\ref{Ass: Coefficients} with constants $\mu^{\bullet} , \mu_{\bullet} > 0$. Let $\omega \in (\pi / 2 , \pi)$ be the number determined by Lemma~\ref{Lem: Caccioppoli with delta-pressure}. Then for any $0 < \theta < \omega$ the following holds: \par
Let $f \in \L^2_{\sigma} (\IR^d)$, $F \in \L^2 (\IR^d ; \IC^{d \times d})$, and let $\lambda \in \S_{\theta}$. For $u \in \H^1_{\sigma} (\IR^d)$ defined by $u := (\lambda + A)^{-1} (f + \IP \divergence(F))$ and $x_0 \in \IR^d$ and $r_0 > 0$ there exists a decomposition of $u$ of the form $u = u_1 + u_2$ with $u_1 \in \cH^1_{\sigma} (B(x_0 , r_0))$ and $u_2 \equiv u$ in $\IR^d \setminus B(x_0 , r_0)$ and there exists $\phi_1 \in \L^2 (B(x_0 , r_0))$ and $C > 0$ such that for any ball $B \subset \IR^d$ of radius $r > 0$ with $2B \subset B(x_0 , r_0)$ we have
\begin{align}
\label{Eq: Second estimate}
\begin{aligned}
 &\lvert \lambda \rvert^3 r^2 \int_{B} \lvert u_2 \rvert^2 \, \d x + \lvert \lambda \rvert^2 r^2 \int_{B} \lvert \nabla u_2 \rvert^2 \; \d x \\
  &\quad \leq C \bigg\{ \sum_{\ell = 0}^{\infty} 2^{- \ell d - \ell} \int_{2^{\ell} B} \big(\lvert \lambda u \rvert^2 + \lvert f \rvert^2 + \lvert \lvert \lambda \rvert^{\frac{1}{2}} F \rvert^2\big) \, \d x + \int_{2 B} \lvert \lambda u_1 \rvert^2 \, \d x + \int_{2 B} \lvert \lvert \lambda \rvert^{\frac{1}{2}} \phi_1 \rvert^2 \, \d x \bigg\}\cdotp
\end{aligned}
\end{align}
Moreover, $u_1$ and $\phi_1$ satisfy for some $C > 0$
\begin{align}
\label{Eq: First estimate u}
\begin{aligned}
 \lvert \lambda \rvert \| u_1 \|_{\L^2 (B(x_0 , r_0))} + \lvert \lambda \rvert^{\frac{1}{2}} \| \nabla u_1 \|_{\L^2 (B (x_0 , r_0))} &+ \lvert \lambda \rvert^{\frac{1}{2}} \| \phi_1 \|_{\L^2 (B(x_0 , r_0))} \\
 &\quad\leq C \big( \| f \|_{\L^2 (B (x_0 , r_0))} + \lvert \lambda \rvert^{\frac{1}{2}} \| F \|_{\L^2 (B(x_0 , r_0))} \big).
\end{aligned}
\end{align}
In both inequalities, the constant $C$ does only depend on $d$, $\theta$, $\mu_{\bullet}$, and $\mu^{\bullet}$.
\end{lemma}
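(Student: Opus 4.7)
The plan is to split $u = u_1 + u_2$ with $u_1$ a ``local Neumann-type corrector'' on $B(x_0 , r_0)$ that absorbs the source data and $u_2$ a homogeneous remainder, and then to estimate $u_2$ on a smaller ball $B \subset B(x_0 , r_0)$ by combining Lemma~\ref{Lem: Caccioppoli variable pressure}, Lemma~\ref{Lem: Non-local pressure estimate}, and Theorem~\ref{Thm: Non-local Caccioppoli} applied to the \emph{global} solution $u$. First I would use Lax--Milgram on $\cH^1_\sigma (B(x_0 , r_0))$ to construct $u_1$ as the unique solution of the variational Stokes problem with data $f$ and $F$, and use the remark preceding the lemma to extract a pressure $\phi_1 \in \L^2 (B(x_0 , r_0))$ satisfying~\eqref{Eq: Variational Neumann}; the bound~\eqref{Eq: First estimate u} is immediate from~\eqref{Eq: Resolvent Neumann} and~\eqref{Eq: Pressure Neumann}. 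Putting $u_2 := u - u_1$ on $B(x_0, r_0)$ and $u_2 := u$ outside, with $\phi_2 := \phi - \phi_1$ for $\phi$ a distributional pressure of the global problem, subtracting~\eqref{Eq: Variational Neumann} from the global equation tested against $v \in \H^1_0 (B(x_0 , r_0) ; \IC^d)$ shows that $(u_2 , \phi_2)$ solves the homogeneous Stokes system on $B(x_0 , r_0)$.

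Given a ball $B$ with $2B \subset B(x_0 , r_0)$, I would apply Lemma~\ref{Lem: Caccioppoli variable pressure} to $(u_2 , \phi_2)$ with $c_2 = 0$ and process the product pressure term via Young's inequality in its absorbing form (the second option discussed before that lemma). After absorbing $|\lambda| \| u_2 \eta \|^2$ on the left this yields
\begin{equation*}
 |\lambda|\|u_2\|_{\L^2(B)}^2 + \|\nabla u_2\|_{\L^2(B)}^2 \leq \frac{C}{r^2}\|u_2\|_{\L^2(2B)}^2 + \frac{C}{|\lambda| r^2}\|\phi_2 - c_1\|_{\L^2(2B \setminus B)}^2.
\end{equation*}
Multiplying by $|\lambda|^2 r^2$ recovers the LHS of~\eqref{Eq: Second estimate} bounded by $C|\lambda|^2 \|u_2\|_{\L^2(2B)}^2 + C|\lambda|\|\phi_2 - c_1\|_{\L^2(2B\setminus B)}^2$. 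Using $|u_2|^2 \leq 2|u|^2 + 2|u_1|^2$ takes care of the first summand by producing $\int_{2B}|\lambda u|^2$ (the $\ell = 1$ piece of the target sum, up to the constant $2^{d+1}$) and the directly available $\int_{2B}|\lambda u_1|^2$. Choosing $c_1 := (\phi)_{2B\setminus B} - (\phi_1)_{2B\setminus B}$ the pressure difference splits via the triangle inequality into $\|\phi - \phi_{\cC_1}\|_{\L^2(\cC_1)}^2 + \|\phi_1\|_{\L^2(2B)}^2$ with $\cC_1 = 2B \setminus B$, the second piece giving $\int_{2B}\lvert|\lambda|^{1/2}\phi_1\rvert^2$ after multiplication by $|\lambda|$.

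The remaining oscillation is bounded by Lemma~\ref{Lem: Non-local pressure estimate} applied to the global pair $(u , \phi)$ at $k = 1$, followed by the Cauchy--Schwarz manipulation of Lemma~\ref{Lem: Square in series} with decay exponent $d + 1 < d + 2$, yielding
\begin{equation*}
 |\lambda|\|\phi - \phi_{\cC_1}\|_{\L^2(\cC_1)}^2 \leq C \sum_{\ell = 0}^{\infty} 2^{-(d+1)\ell}\bigl(|\lambda|\|\nabla u\|_{\L^2(\cC_\ell)}^2 + \| \lvert \lambda \rvert^{1/2} F \|_{\L^2(\cC_\ell)}^2\bigr).
\end{equation*}
The $F$-piece is already in the target form. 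For the $\nabla u$-piece I would use $\cC_\ell \subset 2^\ell B$ and apply Theorem~\ref{Thm: Non-local Caccioppoli} to the global $u$ at $B$ with $\nu = d+1$ and $c_k = 0$, multiplied by $|\lambda|$: this controls the weighted gradient sum by $C\sum 2^{-(d+1) k}\int_{2^{k+1}B} |f|^2 + C|\lambda|\sum 2^{-(d+1) k}\int_{2^{k+1}B}|F|^2$ (fitting the target after reindexing $\ell = k + 1$) together with a Caccioppoli-type residual $\frac{C|\lambda|}{r^2}\sum 2^{-(d+3)k}\int_{2^{k+1}B}|u|^2$.

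The hard step will be to handle this residual, which equals $\frac{1}{|\lambda|r^2}\sum 2^{-(d+3)k}\int|\lambda u|^2$ and differs from the target $\sum 2^{-(d+1)\ell}\int|\lambda u|^2$ by a factor $2^{-2\ell}/(|\lambda|r^2)$; the extra $2^{-2\ell}$ decay confines the problem to low $\ell$, and the prefactor $1/(|\lambda|r^2)$ is absorbed by exploiting the second half of the left-hand side of Theorem~\ref{Thm: Non-local Caccioppoli}, namely $|\lambda|\sum 2^{-(d+1)k}\int_{2^k B}|u|^2 = \frac{1}{|\lambda|}\sum 2^{-(d+1) k}\int|\lambda u|^2$, to recycle $\int|\lambda u|^2$ back into the target sum. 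Collecting all contributions and adjusting constants then delivers~\eqref{Eq: Second estimate}.
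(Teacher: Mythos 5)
Your construction of $u_1$, $\phi_1$, $u_2$ and the derivation of~\eqref{Eq: First estimate u} match the paper. The gap lies in the treatment of the pressure product term, where you decide to ``process the product pressure term via Young's inequality in its absorbing form''. That choice loses exactly the cancellation on which the paper's proof hinges, and your hand-wavy final paragraph does not recover it.

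Concretely, after applying the absorbing Young inequality and multiplying by $|\lambda|^2 r^2$, you are left with $C|\lambda|\|\phi_2-c_1\|^2_{\L^2(\cC_1)}$ on the right, hence via Lemma~\ref{Lem: Non-local pressure estimate} and Theorem~\ref{Thm: Non-local Caccioppoli} (with $\nu=d+1$, $c_k=0$, times $|\lambda|$) with a residual
\begin{equation*}
 \frac{C}{|\lambda| r^2}\sum_{k\ge 0} 2^{-(d+3)k}\int_{2^{k+1}B}|\lambda u|^2\,\d x.
\end{equation*}
The factor $1/(|\lambda| r^2)$ is unbounded as $|\lambda| r^2\to 0$, and none of the admissible terms in~\eqref{Eq: Second estimate} can dominate it uniformly. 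Your suggestion to ``recycle'' $\int|\lambda u|^2$ via the zero-order half of the left-hand side of Theorem~\ref{Thm: Non-local Caccioppoli} points in the wrong direction: that theorem gives $\frac{1}{|\lambda|}\sum 2^{-(d+1)k}\int|\lambda u|^2 \lesssim \frac{1}{|\lambda|^2 r^2}\sum 2^{-(d+3)k}\int|\lambda u|^2 + \text{(data)}$, i.e.\ it bounds the target by the residual, not the other way around. The extra $2^{-2\ell}$ decay is also of no help, since the problematic prefactor is $\ell$-independent.

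The paper avoids this by \emph{not} applying Young's inequality at the Caccioppoli stage. It multiplies Lemma~\ref{Lem: Caccioppoli variable pressure} by $|\lambda|^2 r^2$ while keeping the product
\begin{equation*}
 4|\lambda|^2 r\,\big\|\phi_2-c_1\big\|_{\L^2(\cC_1)}\,\big\|u_2\eta\big\|_{\L^2(2B)},
\end{equation*}
then processes the left factor through Lemma~\ref{Lem: Non-local pressure estimate}, Cauchy--Schwarz for series, and Theorem~\ref{Thm: Non-local Caccioppoli} \emph{inside the square root}. This is what makes the $1/r$ that emerges from $\sqrt{C/r^2}$ cancel against the $r$ in $|\lambda|^2 r$, producing the benign term $C\big(\sum 2^{-\ell(d+3)}\int|\lambda u|^2\big)^{1/2}\big(\int_{2B}||\lambda|u_2|^2\big)^{1/2}$; similarly the $1/\sqrt{|\lambda|}$ from the $f$-piece turns the remaining factor into $\big(|\lambda|^3 r^2\int|u_2\eta|^2\big)^{1/2}$. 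Only then is Young's inequality applied, differently per term: the $u_2$-factor of the first term is left on the right and split via $u_2=u-\widetilde{u_1}$, while the $|\lambda|^3 r^2\int|u_2\eta|^2$-factors of the others are absorbed. The paper's paragraph preceding Lemma~\ref{Lem: Preparation of reverse Holder} flags exactly this subtlety (``Depending on the particular situation, we will need to decide differently''); committing to the absorbing form at the outset, as you do, is precisely the choice that cannot work here.

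Everything else in your outline (construction of $u_1,\phi_1$ via the Neumann form, the estimates~\eqref{Eq: Resolvent Neumann}--\eqref{Eq: Pressure Neumann}, the choice $c_1=\phi_{\cC_1}$, and the use of Lemma~\ref{Lem: Non-local pressure estimate} at $k=1$) agrees with the paper, so the fix is local: postpone Young and carry the product through the application of Theorem~\ref{Thm: Non-local Caccioppoli}.
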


\begin{proof}
Fix $f \in \L^2_{\sigma} (\IR^d)$, $F \in \L^2 (\IR^d ; \IC^{d \times d})$, and $\lambda \in \S_{\theta}$. Define $u := (\lambda + A)^{-1} (f + \IP \divergence(F))$ and let $\phi \in \L^2_{\loc} (\IR^d)$ be the associated pressure. Let $B \subset \IR^d$ be a ball of radius $r > 0$ with $2 B \subset B(x_0 , r_0)$ and let $g := f|_{B(x_0 , r_0)}$. The definition of $\Lop^2_{\sigma} (B(x_0 , r_0))$ implies that $g \in \Lop^2_{\sigma} (B(x_0 , r_0))$. Let further $G := F|_{B(x_0 , r_0)}$. Then, there exists $u_1 \in \cH^1_{\sigma} (B(x_0 , r_0))$ such that for all $v \in \cH^1_{\sigma} (B(x_0 , r_0))$ it holds
\begin{align*}
 \lambda \int_{B(x_0 , r_0)} u_1 \cdot \overline{v} \; \d x + \fb_{B(x_0 , r_0)} (u_1 , v) = \int_{B(x_0 , r_0)} g \cdot \overline{v} \; \d x - \int_{B(x_0 , r_0)} G_{\alpha \beta} \cdot \overline{\partial_{\alpha} v_{\beta}} \; \d x.
\end{align*}
Let $\phi_1 \in \L^2 (B(x_0 , r_0))$ denote the associated pressure. By~\eqref{Eq: Resolvent Neumann} and~\eqref{Eq: Pressure Neumann} we find that
\begin{align*}
 \lvert \lambda \rvert \| u_1 \|_{\L^2 (B(x_0 , r_0))} + \lvert \lambda \rvert^{\frac{1}{2}} \| \nabla u_1 \|_{\L^2 (B (x_0 , r_0))} &+ \lvert \lambda \rvert^{\frac{1}{2}} \| \phi_1 \|_{\L^2 (B(x_0 , r_0))} \\
 &\qquad\leq C \big( \| f \|_{\L^2 (B (x_0 , r_0))} + \lvert \lambda \rvert^{\frac{1}{2}} \| F \|_{\L^2 (B(x_0 , r_0))} \big).
\end{align*}
Notice that the constant $C > 0$ depends only on $d$, $\theta$, $\mu_{\bullet}$, and $\mu^{\bullet}$. In particular, it does not depend on $r_0$.  \par
Now, define $u_2 := u - \widetilde{u_1}$ and $\phi_2 := \phi - \widetilde{\phi_1}$. Here, $\widetilde{u_1}$ and $\widetilde{\phi_1}$ denote the extensions by zero to all of $\IR^d$ of $u_1$ and $\phi_1$. By definitions of all functions, we find that
\begin{align*}
 \lambda \int_{B (x_0 , r_0)} u_2 \cdot \overline{v} \; \d x + \fb_{B (x_0 , r_0)} (u_2 , v) - \int_{B (x_0 , r_0)} \phi_2 \, \overline{\divergence(v)} \; \d x = 0 \qquad (v \in \H^1_0 (B (x_0 , r_0) ; \IC^d)).
\end{align*}
Let $\eta \in \C_c^{\infty} (2 B)$ with $\eta \equiv 1$ in $B$, $0 \leq \eta \leq 1$, and $\| \nabla \eta \|_{\L^{\infty}} \leq 2 / r$. We apply Lemma~\ref{Lem: Caccioppoli variable pressure} with $c_1 \in \IC$ and $c_2 = 0$ to $u_2$ and $\phi_2$ leading to the estimate
\begin{align*}
 &\lvert \lambda \rvert^3 r^2 \int_{2 B} \lvert u_2 \eta \rvert^2 \, \d x + \lvert \lambda \rvert^2 r^2 \int_{2 B} \lvert \nabla [u_2 \eta] \rvert^2 \; \d x \\
 &\qquad \leq 4 \lvert \lambda \rvert^2 r \bigg(\int_{2 B \setminus B} \lvert \phi_2 - c_1 \rvert^2 \, \d x \bigg)^{\frac{1}{2}} \bigg( \int_{2 B} \lvert u_2 \eta \rvert^2 \, \d x \bigg)^{\frac{1}{2}} + C \lvert \lambda \rvert^2 \int_{2 B} \lvert u_2 \rvert^2 \, \d x \\
 &\qquad \leq 4 \lvert \lambda \rvert^2 r \bigg(\int_{2 B \setminus B} \lvert \phi - c_1 \rvert^2 \, \d x \bigg)^{\frac{1}{2}} \bigg( \int_{2 B} \lvert u_2 \eta \rvert^2 \, \d x \bigg)^{\frac{1}{2}} + C \lvert \lambda \rvert^2 \int_{2 B} \lvert u_2 \rvert^2 \, \d x \\
 &\qquad\qquad + 4 \lvert \lambda \rvert^2 r \bigg(\int_{2 B \setminus B} \lvert \phi_1 \rvert^2 \, \d x \bigg)^{\frac{1}{2}} \bigg( \int_{2 B} \lvert u_2 \eta \rvert^2 \, \d x \bigg)^{\frac{1}{2}}.
\end{align*}
Set $c_1 := \phi_{2 B \setminus B}$ and apply Lemma~\ref{Lem: Non-local pressure estimate} with $k = 1$ to estimate $\phi - \phi_{2 B \setminus B}$ in the first inequality. In the second, use H\"older's inequality for series and in the third, employ Theorem~\ref{Thm: Non-local Caccioppoli} with $\nu = d + 1$ and $c_{\ell} = 0$. This yields
\begin{align*}
 &\lvert \lambda \rvert^3 r^2 \int_{2 B} \lvert u_2 \eta \rvert^2 \, \d x + \lvert \lambda \rvert^2 r^2 \int_{2 B} \lvert \nabla [u_2 \eta] \rvert^2 \; \d x \\
&\qquad \leq 4 \lvert \lambda \rvert^2 r \bigg\{ \bigg(\sum_{\substack{\ell \in \IN_0 \\ \lvert \ell - 1 \rvert \leq 1}} (\| \nabla u \|_{\L^2 (\cC_{\ell})} + \| F \|_{\L^2 (\cC_{\ell})} + \sum_{\ell = 3}^{\infty} 2^{(\frac{d}{2} + 1) (1 - \ell)} (\| \nabla u \|_{\L^2 (\cC_{\ell})} + \| F \|_{\L^2 (\cC_{\ell})})\bigg) \\
 &\qquad \qquad \cdot \bigg( \int_{2 B} \lvert u_2 \eta \rvert^2 \, \d x \bigg)^{\frac{1}{2}} \bigg\} + C \lvert \lambda \rvert^2 \int_{2 B} \lvert u_2 \rvert^2 \, \d x + 4 \lvert \lambda \rvert^2 r \bigg(\int_{2 B \setminus B} \lvert \phi_1 \rvert^2 \, \d x \bigg)^{\frac{1}{2}} \bigg( \int_{2 B} \lvert u_2 \eta \rvert^2 \, \d x \bigg)^{\frac{1}{2}} \allowdisplaybreaks \\
 &\qquad \leq C_d \lvert \lambda \rvert^2 r \bigg(\sum_{\ell = 0}^{\infty} 2^{- \ell d - \ell} \int_{2^{\ell} B}\lvert \nabla u \rvert^2 \, \d x\bigg)^{\frac{1}{2}} \bigg( \int_{2 B} \lvert u_2 \eta \rvert^2 \, \d x \bigg)^{\frac{1}{2}} \\
 &\qquad\qquad + C_d \lvert \lambda \rvert^2 r \bigg(\sum_{\ell = 0}^{\infty} 2^{- \ell d - \ell} \int_{2^{\ell} B}\lvert F \rvert^2 \, \d x\bigg)^{\frac{1}{2}} \bigg( \int_{2 B} \lvert u_2 \eta \rvert^2 \, \d x \bigg)^{\frac{1}{2}} \\
 &\qquad\qquad + C \lvert \lambda \rvert^2 \int_{2 B} \lvert u_2 \rvert^2 \, \d x + 4 \lvert \lambda \rvert^2 r \bigg(\int_{2 B \setminus B} \lvert \phi_1 \rvert^2 \, \d x \bigg)^{\frac{1}{2}} \bigg( \int_{2 B} \lvert u_2 \eta \rvert^2 \, \d x \bigg)^{\frac{1}{2}} \allowdisplaybreaks \\
 &\qquad \leq C_{d , \theta, \mu^{\bullet} , \mu_{\bullet}} \bigg(\sum_{\ell = 0}^{\infty} 2^{- \ell d - 3 \ell} \int_{2^{\ell} B}\lvert \lambda u \rvert^2 \, \d x \bigg)^{\frac{1}{2}} \bigg( \int_{2 B} \lvert \lvert \lambda \rvert u_2 \rvert^2 \, \d x \bigg)^{\frac{1}{2}} \\
 &\qquad\qquad + C_{d , \theta, \mu^{\bullet} , \mu_{\bullet}} \bigg(\sum_{\ell = 0}^{\infty} 2^{- \ell d - \ell} \int_{2^{\ell} B}\lvert f \rvert^2 \, \d x \bigg)^{\frac{1}{2}} \bigg( \lvert \lambda \rvert^3 r^2 \int_{2 B} \lvert u_2 \eta \rvert^2 \, \d x \bigg)^{\frac{1}{2}} \\
  &\qquad\qquad + C_{d , \theta , \mu^{\bullet} , \mu_{\bullet}} \bigg(\sum_{\ell = 0}^{\infty} 2^{- \ell d - \ell} \int_{2^{\ell} B}\lvert \lvert \lambda \rvert^{\frac{1}{2}} F \rvert^2 \, \d x\bigg)^{\frac{1}{2}} \bigg( \lvert \lambda \rvert^3  r^2\int_{2 B} \lvert u_2 \eta \rvert^2 \, \d x \bigg)^{\frac{1}{2}} \\
 &\qquad\qquad + C \int_{2 B} \lvert \lvert \lambda \rvert u_2 \rvert^2 \, \d x + 4 \bigg(\int_{2 B \setminus B} \lvert \lvert \lambda \rvert^{\frac{1}{2}} \phi_1 \rvert^2 \, \d x \bigg)^{\frac{1}{2}} \bigg( \lvert \lambda \rvert^3 r^2 \int_{2 B} \lvert u_2 \eta \rvert^2 \, \d x \bigg)^{\frac{1}{2}}.
\end{align*}
Consequently, by Young's inequality and the properties of $\eta$ we find that
\begin{align*}
 &\lvert \lambda \rvert^3 r^2 \int_{2 B} \lvert u_2 \eta \rvert^2 \, \d x + \lvert \lambda \rvert^2 r^2 \int_{2 B} \lvert \nabla [u_2 \eta] \rvert^2 \; \d x \\
 &\quad \leq \frac{C_{d , \theta, \mu^{\bullet} , \mu_{\bullet}}}{2} \sum_{\ell = 0}^{\infty} 2^{- \ell d - 3 \ell} \int_{2^{\ell} B}\lvert \lambda u \rvert^2 \, \d x + C \int_{2 B} \lvert \lvert \lambda \rvert u_2 \rvert^2 \, \d x  + C_{d , \theta, \mu^{\bullet} , \mu_{\bullet}}^2 \sum_{\ell = 0}^{\infty} 2^{- \ell d - \ell} \int_{2^{\ell} B}\lvert f \rvert^2 \, \d x \\
 &\quad\qquad + C_{d , \theta, \mu^{\bullet} , \mu_{\bullet}}^2 \sum_{\ell = 0}^{\infty} 2^{- \ell d - \ell} \int_{2^{\ell} B}\lvert \lvert \lambda \rvert^{\frac{1}{2}} F \rvert^2 \, \d x  + 16 \int_{2 B \setminus B} \lvert \lvert \lambda \rvert^{\frac{1}{2}} \phi_1 \rvert^2 \, \d x + \frac{3}{4} \lvert \lambda \rvert^3 r^2 \int_{2 B} \lvert u_2 \eta \rvert^2 \, \d x.
\end{align*}
Now, the last term on the right-hand side can be absorbed onto the left-hand side. Moreover, we use that $u_2 = u - \widetilde{u_1}$ and we estimate $2^{- \ell d - 3 \ell} \leq 2^{- \ell d - \ell}$, so that we find a constant $C > 0$ depending on $d$, $\theta$, $\mu^{\bullet}$, and $\mu_{\bullet}$ that
\begin{align*}
 &\lvert \lambda \rvert^3 r^2 \int_{2 B} \lvert u_2 \eta \rvert^2 \, \d x + \lvert \lambda \rvert^2 r^2 \int_{2 B} \lvert \nabla [u_2 \eta] \rvert^2 \; \d x \\
 &\qquad \leq C \bigg\{ \sum_{\ell = 0}^{\infty} 2^{- \ell d - \ell} \int_{2^{\ell} B} \big(\lvert \lambda u \rvert^2 + \lvert f \rvert^2 + \lvert \lvert \lambda \rvert^{\frac{1}{2}} F \rvert^2 \big) \, \d x + \int_{2 B}\lvert \lambda u_1 \rvert^2 \, \d x + \int_{2 B} \lvert \lvert \lambda \rvert^{\frac{1}{2}} \phi_1 \rvert^2 \, \d x \bigg\}\cdotp
\end{align*}
Since $\eta \equiv 1$ in $B$ we conclude the estimate~\eqref{Eq: Second estimate}.
\end{proof}

We are now in the position to present the proofs of Theorems~\ref{Thm: Resolvent} and~\ref{Thm: Max Reg}.

\begin{proof}[Proofs of Theorems~\ref{Thm: Resolvent} and~\ref{Thm: Max Reg}]
As it was mentioned earlier, Theorem~\ref{Thm: Resolvent} is a direct consequence of Theorem~\ref{Thm: Max Reg}. This can also be seen in the proof as Theorem~\ref{Thm: Resolvent} corresponds to taking $k_0 = 1$ throughout. \par

\subsection*{The case $p > 2$}

Let $k_0 \in \IN$, $\omega \in (\pi / 2 , \pi)$ as in Lemma~\ref{Lem: Caccioppoli with delta-pressure}. For $\theta \in (0 , \omega)$ let $(\lambda_k)_{k = 1}^{k_0} \subset \S_{\theta}$ and let $(f_k)_{k = 1}^{k_0} \subset \C_{c , \sigma}^{\infty} (\IR^d)$. For $f = (f_1 , \dots , f_{k_0} , 0 , \dots)$, we saw in Observation~\ref{Obs: Square function estimate} that we need to prove the estimate
\begin{align*}
 \| T_{(\lambda_k)_{k = 1}^{k_0}} f \|_{\L^p (\IR^d ; \ell^2 (\IC^d))} \leq C \| f \|_{\L^p (\IR^d ; \ell^2 (\IC^d))},
\end{align*}
with a constant being uniform with respect to all choices above. This will be done by verifying the assumptions of Theorem~\ref{Thm: Modified Shen whole space} uniformly with respect to the choices of parameters above. For the application of Theorem~\ref{Thm: Modified Shen whole space} we set $X = Y = Z = \ell^2 (\IC^d)$ and let $\cC = \Id$. \par
For this purpose, define for $1 \leq k \leq k_0$ the functions $u_k := (\lambda_k + A)^{-1} f_k$. Let $x_0 \in \IR^d$, $r > 0$, and let $B := B(x_0 , r)$. Let further $u_{k , 1}$, $u_{k , 2}$, and $\phi_{k , 1}$ denote the functions provided by Lemma~\ref{Lem: Preparation of reverse Holder} with $r_0 := 2 r$. Notice that
\begin{align}
\label{Eq: Decomposition of T}
 \| T_{(\lambda_k)_{k = 1}^{k_0}} f \|_{\ell^2} = \Big[ \sum_{k = 1}^{k_0} \lvert \lambda_k u_k \rvert^2 \Big]^{\frac{1}{2}} \leq \Big[ \sum_{k = 1}^{k_0} \lvert \lambda_k u_{k , 1} \rvert^2 \Big]^{\frac{1}{2}} + \Big[ \sum_{k = 1}^{k_0} \lvert \lambda_k u_{k , 2} \rvert^2 \Big]^{\frac{1}{2}}\cdotp
\end{align}
Moreover, in $B$ we have for $1 \leq j \leq d$ by the chain rule and Cauchy--Schwarz' inequality that
\begin{align}
\label{Eq: Derivative of square function}
 \Big\lvert \partial_j \Big[ \sum_{k = 1}^{k_0} \lvert \lambda_k u_{k , 2} \rvert^2 \Big]^{\frac{1}{2}} \Big\rvert &= \Big\lvert \Big[ \sum_{k = 1}^{k_0} \lvert \lambda_k u_{k , 2} \rvert^2 \Big]^{- \frac{1}{2}} \sum_{k = 1}^{k_0} \lvert \lambda_k \rvert \Re(\overline{u_{k , 2}} \partial_j u_{k , 2}) \Big\rvert \leq \Big[ \sum_{k = 1}^{k_0} \lvert \lambda_k \partial_j u_{k , 2} \rvert^2 \Big]^{\frac{1}{2}}\cdotp
\end{align}
We start by deriving some kind of non-local weak reverse H\"older inequality for the second term of the right-hand side of~\eqref{Eq: Decomposition of T}. If $d = 2$ let $p_0 > 2$ and if $d \geq 3$ let $p_0 := 2d / (d - 2)$. By Sobolev's embedding theorem together with~\eqref{Eq: Derivative of square function}, there exists a constant $C_{d , p_0} > 0$ depending only on $d$ and $p_0$ such that
\begin{align*}
 &\bigg( \fint_B \Big[ \sum_{k = 1}^{k_0} \lvert \lambda_k u_{k , 2} \rvert^2 \Big]^{\frac{p_0}{2}} \; \d x \bigg)^{\frac{1}{p_0}} \\
 &\leq C \bigg\{ \bigg( \fint_B \Big[ \sum_{k = 1}^{k_0} \lvert \lambda_k u_{k , 2} \rvert^2 \Big]^{\frac{2}{2}} \; \d x \bigg)^{\frac{1}{2}} + \bigg( \sum_{k = 1}^{k_0} \lvert \lambda_k \rvert^2 r^2 \fint_B \lvert \nabla u_{k , 2} \rvert^2 \; \d x \bigg)^{\frac{1}{2}} \bigg\}\cdotp
\end{align*}
To estimate the second term on the right-hand side, let $\eta \in \C_c^{\infty} (2 B)$ with $\eta \equiv 1$ in $B$, $0 \leq \eta \leq 1$, and $\| \nabla \eta \|_{\L^{\infty}} \leq 2 / r$. Observe that 
\begin{align*}
 \fint_B \lvert \nabla u_{k , 2} \rvert^2 \, \d x \leq 2^d \fint_{2 B} \lvert \nabla [u_{k , 2} \eta ] \rvert^2 \, \d x.
\end{align*}
Thus, employing~\eqref{Eq: Second estimate} with $F = 0$ in the first inequality and the decomposition $u_k = u_{k , 1} + u_{k , 2}$ together with~\eqref{Eq: First estimate u} in the second delivers for some constant $C > 0$ depending only on $d$, $\theta$, $\mu_{\bullet}$, $\mu^{\bullet}$, and $p_0$ that
\begin{align}
\label{Eq: Reverse Holder for u2}
\begin{aligned}
 &\bigg( \fint_B \Big[ \sum_{k = 1}^{k_0} \lvert \lambda_k u_{k , 2} \rvert^2 \Big]^{\frac{p_0}{2}} \; \d x \bigg)^{\frac{1}{p_0}} \\
 &\qquad\leq C \bigg\{ \bigg( \fint_B \Big[ \sum_{k = 1}^{k_0} \lvert \lambda_k u_{k , 2} \rvert^2 \Big]^{\frac{2}{2}} \; \d x \bigg)^{\frac{1}{2}} + \bigg(\sum_{\ell = 0}^{\infty} 2^{- \ell} \fint_{2^{\ell} B} \sum_{k = 1}^{k_0} \big( \lvert \lambda_k u_k \rvert^2 + \lvert f_k \rvert^2 \big) \, \d x \\
 &\qquad\qquad + \fint_{2 B} \sum_{k = 1}^{k_0} \big(\lvert \lvert \lambda_k \rvert^{\frac{1}{2}} \phi_{k , 1} \rvert^2 + \lvert \lambda_k u_{k , 1} \rvert^2 \big) \, \d x \bigg)^{\frac{1}{2}} \bigg\} \\
 &\qquad \leq C \sum_{\ell = 0}^{\infty} 2^{- \ell} \fint_{2^{\ell} B} \sum_{k = 1}^{k_0} \big( \lvert \lambda_k u_k \rvert^2 + \lvert f_k \rvert^2 \big) \, \d x.
\end{aligned}
\end{align}
Now, we are in the position to verify the assumptions of Theorem~\ref{Thm: Modified Shen whole space}. Let $Q = Q (x_0 , r / 18)$ be a cube in $\IR^d$ with center $x_0$ and $\diam(Q) = r / 18$ and notice that $2 Q^* \subset B(x_0 , r / 6)$, where $Q^*$ denotes a of parent of $Q$. Then for any $\alpha > 0$ we find by virtue of~\eqref{Eq: Decomposition of T} that
\begin{align}
\label{Eq: Distributions function}
\begin{aligned}
 \lvert\{ x \in Q : M_{2 Q^*} (\| T f \|_{\ell^2}^2) (x) > \alpha \} \rvert &\leq \bigg\lvert\bigg\{ x \in Q : M_{2 Q^*} \Big(\sum_{k = 1}^{k_0} \lvert \lambda_k u_{k , 1} \rvert^2 \Big) (x) > \frac{\alpha}{4} \bigg\} \bigg\rvert \\
 &\qquad+ \bigg\lvert\bigg\{ x \in Q : M_{2 Q^*} \Big( \Big[ \sum_{k = 1}^{k_0} \lvert \lambda_k u_{k , 2} \rvert^2 \Big]^{\frac{2}{2}} \Big) (x) > \frac{\alpha}{4} \bigg\} \bigg\rvert\cdotp
\end{aligned}
\end{align} 
The weak-$(1 , 1)$ estimate of the localized maximal operator followed by~\eqref{Eq: First estimate u} directly yields with some constant depending only on $d$, $\theta$, $\mu_{\bullet}$, and $\mu^{\bullet}$
\begin{align*}
 \bigg\lvert\bigg\{ x \in Q : M_{2 Q^*} \Big(\sum_{k = 1}^{k_0} \lvert \lambda_k u_{k , 1} \rvert^2\Big) (x) > \frac{\alpha}{4} \bigg\} \bigg\rvert \leq \frac{C}{\alpha} \int_{Q (x_0 , 2 \sqrt{d} r)} \Big[ \sum_{k = 1}^{k_0} \lvert f_k \rvert^2 \Big]^{\frac{2}{2}} \, \d x.
\end{align*}
For the second term in~\eqref{Eq: Distributions function} use the weak-$(p_0 / 2 , p_0 / 2)$ inequality of the localized maximal operator followed by~\eqref{Eq: Reverse Holder for u2}. This gives with a constant $C > 0$ depending only on $d$, $\theta$, $\mu_{\bullet}$, $\mu^{\bullet}$, and $p_0$ that
\begin{align*}
 &\bigg\lvert\bigg\{ x \in Q : M_{2 Q^*} \Big( \Big[ \sum_{k = 1}^{k_0} \lvert \lambda_k u_{k , 2} \rvert^2 \Big]^{\frac{2}{2}} \Big) (x) > \frac{\alpha}{4} \bigg\} \bigg\rvert \\
 &\qquad\leq \frac{C}{\alpha^{p_0 / 2}} \bigg( \fint_{B(x_0 , r / 6)} \Big[ \sum_{k = 1}^{k_0} \lvert \lambda_k u_{k , 2} \rvert^2 \Big]^{\frac{p_0}{2}} \; \d x \bigg)^{\frac{1}{p_0}} \\
 &\qquad\leq \frac{C}{\alpha^{p_0 / 2}} \sum_{\ell = 0}^{\infty} 2^{- \ell} \bigg( \fint_{B(x_0 , 2^{\ell} r)}\Big[ \sum_{k = 1}^{k_0} \lvert \lambda_k u_k \rvert^2 \Big]^{\frac{2}{2}} + \Big[ \sum_{k = 1}^{k_0} \lvert f_k \rvert^2 \Big]^{\frac{2}{2}} \, \d x \bigg)^{\frac{1}{2}} \\
  &\qquad\leq \frac{C}{\alpha^{p_0 / 2}} \sup_{Q^{\prime} \supset 2 Q^*} \bigg( \fint_{Q^{\prime}} \Big[ \sum_{k = 1}^{k_0} \lvert \lambda_k u_k \rvert^2 \Big]^{\frac{2}{2}} + \Big[ \sum_{k = 1}^{k_0} \lvert f_k \rvert^2 \Big]^{\frac{2}{2}} \, \d x \bigg)^{\frac{1}{2}}.
\end{align*}
This concludes the proof of this case.

\subsection*{The case $p < 2$}

This case follows directly by the duality principle as described by Kalton and Weis in~\cite[Lem.~3.1]{Kalton_Weis} since $\L^p_{\sigma} (\IR^d)$ is of non-trivial Rademacher type if $1 < p < \infty$.
\end{proof}

\begin{proof}[Proof of Theorem~\ref{Thm: Gradient}]
Let $\omega \in (\pi / 2 , \pi)$ as in Lemma~\ref{Lem: Caccioppoli with delta-pressure}. For $\theta \in (0 , \omega)$ let $\lambda \in \S_{\theta}$. We argue by duality and prove the $\L^p$-boundedness of 
\begin{align*}
 T := \lvert \lambda \rvert^{1 / 2} (\lambda + A)^{-1} \IP \divergence
\end{align*}
for $p \geq 2$ satisfying~\eqref{Eq: Lp interval}. The uniform bound follows by verifying the assumptions of Theorem~\ref{Thm: Modified Shen whole space} uniformly with respect to $\lambda$. \par
We choose $X = Y = Z = \IC^{d \times d}$ and $\cC = \Id$. Let $F \in \L^2 (\IR^d ; \IC^{d \times d})$ and define $u := (\lambda + A)^{-1} \IP \divergence(F)$. Let further $x_0 \in \IR^d$ and $r > 0$ and let $u_1$, $u_2$, and $\phi_1$ denote the corresponding functions from Lemma~\ref{Lem: Preparation of reverse Holder} with $r_0 := 2 r$. Let $p_0 > 2$ if $d = 2$ and $p_0 := 2d / (d - 2)$ if $d \geq 3$. Then, Sobolev's inequality implies that
\begin{align*}
 \bigg( \fint_{B(x_0 , r)} \lvert \lvert \lambda \rvert^{\frac{1}{2}} u_2 \rvert^{p_0} \, \d x \bigg)^{\frac{1}{p_0}} \leq C \bigg\{ \bigg( \fint_{B(x_0 , r)} \lvert \lvert \lambda \rvert^{\frac{1}{2}} u_2 \rvert^2 \, \d x \bigg)^{\frac{1}{2}} + \bigg( r^2 \fint_{B(x_0 , r)} \lvert \lvert \lambda \rvert^{\frac{1}{2}} \nabla u_2 \rvert^2 \, \d x \bigg)^{\frac{1}{2}} \bigg\}.
\end{align*}
Now, let $\eta \in \C_c^{\infty} (2 B)$ with $\eta \equiv 1$ in $B$, $0 \leq \eta \leq 1$, and $\| \nabla \eta \|_{\L^{\infty}} \leq 2 / r$. Then
\begin{align*}
 r^2 \fint_{B(x_0 , r)} \lvert \lvert \lambda \rvert^{\frac{1}{2}} \nabla u_2 \rvert^2 \, \d x \leq 2^d \lvert \lambda \rvert r^2 \fint_{B(x_0 , 2 r)} \lvert \nabla [\eta u_2] \rvert^2 \, \d x.
\end{align*}
Now, employ~\eqref{Eq: Second estimate} with $f = 0$ in the first inequality and then~\eqref{Eq: First estimate u} with $f = 0$ to get
\begin{align*}
 \bigg( \fint_{B(x_0 , r)} \lvert \lvert \lambda \rvert^{\frac{1}{2}} u_2 \rvert^{p_0} \, \d x \bigg)^{\frac{1}{p_0}} &\leq C \bigg\{ \bigg( \fint_{B(x_0 , r)} \lvert \lvert \lambda \rvert^{\frac{1}{2}} u_2 \rvert^2 \, \d x \bigg)^{\frac{1}{2}} + \bigg( \sum_{\ell = 0}^{\infty} 2^{- \ell d - \ell} \int_{2^{\ell} B} \big(\lvert \lvert \lambda \rvert^{\frac{1}{2}} u \rvert^2 + \lvert F \rvert^2\big) \, \d x \\
 &\qquad + \int_{2 B} \lvert \lvert \lambda \rvert^{\frac{1}{2}} u_1 \rvert^2 \, \d x + \int_{2 B} \lvert \phi_1 \rvert^2 \, \d x \bigg)^{\frac{1}{2}} \bigg\} \\
 &\leq C \bigg( \sum_{\ell = 0}^{\infty} 2^{- \ell d - \ell} \int_{2^{\ell} B} \big(\lvert \lvert \lambda \rvert^{\frac{1}{2}} u \rvert^2 + \lvert F \rvert^2\big) \, \d x \bigg)^{\frac{1}{2}}\cdotp
\end{align*}
The rest of the proof can be finished literally as the proof of Theorem~\ref{Thm: Max Reg} starting from~\eqref{Eq: Distributions function}.
\end{proof}

\begin{bibdiv}
\begin{biblist}

\bibitem{Auscher}
P.~Auscher.
\newblock {\em On necessary and sufficient conditions for $L^p$-estimates of Riesz transforms associated to elliptic operators on $\IR^n$ and related estimates\/}.
\newblock Mem.\@ Amer.\@ Math.\@ Soc.~\textbf{186} (2007), no.~871.


\bibitem{Blunck_Kunstmann}
S.~Blunck and P.~C.~Kunstmann.
\newblock {\em Calder\'on-Zygmund theory for non-integral operators and the $H^{\infty}$ functional calculus\/}.
\newblock Rev.\@ Mat.\@ Iberoamericana~\textbf{19} (2003), no.~3, 919--942.

\bibitem{Chang_Kang}
T.~Chang and K.~Kang.
\newblock {\em On Caccioppoli's inequalities of Stokes equations and Navier--Stokes equations near boundary\/}.
\newblock J.\@ Differential Equations~\textbf{269} (2020), no.~9, 6732--6757.

\bibitem{Choe_Kozono}
H.~J.~Choe and H.~Kozono.
\newblock {\em The Stokes problem for Lipschitz domains\/}.
\newblock Indiana Univ.\@ Math.\@ J.~\textbf{51} (2002), no.~5, 1235--1260.

\bibitem{Davies}
E.~B.~Davies.
\newblock {\em Uniformly elliptic operators with measurable coefficients\/}.
\newblock J.\@ Funct.\@ Anal.~\textbf{132} (1995), no.~1, 141--169.

\bibitem{Davies_example}
E.~B.~Davies.
\newblock {\em Limits on $L^p$ regularity of self-adjoint elliptic operators\/}.
\newblock J.\@ Differential Equations~\textbf{135} (1997), no.~1, 83--102.

\bibitem{Denk_Hieber_Pruess}
R.~Denk, M.~Hieber, and J.~Pr\"uss.
\newblock {\em $\cR$-boundedness, Fourier multipliers and problems of elliptic and parabolic type\/}.
\newblock Mem.\@ Amer.\@ Math.\@ Soc.~\textbf{166} (2003), no.~788.

\bibitem{Dore}
G.~Dore.
\newblock {\em Maximal regularity in $L^p$ spaces for an abstract Cauchy problem\/}.
\newblock Adv.\@ Differential Equations~\textbf{5} (2000), no.~1-3, 293--322.

\bibitem{Egert}
M.~Egert.
\newblock {\em $L^p$-estimates for the square root of elliptic systems with mixed boundary conditions\/}.
\newblock J.\@ Differential Equations~\textbf{265} (2018), no.~4, 1279--1323.

\bibitem{Engel_Nagel}
K.-J.~Engel and R.~Nagel.
\newblock One-parameter semigroups for linear evolution equations. Graduate Texts in Mathematics, vol.~194.
\newblock Springer, New York, 2000.

\bibitem{Galdi}
G.~P.~Galdi.
\newblock An introduction to the mathematical theory of the Navier-Stokes equations. Steady-state problems. Springer Monographs in Mathematics.
\newblock Springer, New York, 2011.

\bibitem{Giaquinta_Modica}
M.~Giaquinta and G.~Modica.
\newblock {\em Nonlinear systems of the type of the stationary Navier-Stokes system\/}.
\newblock J.\@ Reine Angew.\@ Math.~\textbf{330} (1982), 173--214.

\bibitem{Kalton_Weis}
N.~J.~Kalton and L.~Weis.
\newblock {\em The $H^{\infty}$-calculus and sums of closed operators\/}.
\newblock Math.\@ Ann.~\textbf{312} (2001), no.~2, 319--345.

\bibitem{Kaplicky_Wolf}
P.~Kaplick\'y and J.~Wolf.
\newblock {\em On the higher integrability of weak solutions to the generalized Stokes system with bounded measurable coefficients\/}.
\newblock Dyn.\@ Partial Differ.\@ Equ.~\textbf{15} (2018), no.~2, 127--146.

\bibitem{Kuusi_Mingione_Sire}
T.~Kuusi, G.~Mingione, and Y.~Sire.
\newblock {\em Nonlocal self-improving properties\/}.
\newblock Anal.\@ PDE~\textbf{8} (2015), no.~1, 57--114.

\bibitem{Mitrea_Monniaux_Wright}
M.~Mitrea, S.~Monniaux, and M.~Wright.
\newblock {\em The Stokes operator with Neumann boundary conditions in Lipschitz domains\/}.
\newblock J.\@ Math.\@ Sci.~\textbf{176} (2011), no.~3, 409--457.

\bibitem{Pruss}
J.~Pr\"uss.
\newblock {\em $H^{\infty}$-calculus for generalized Stokes operators\/}.
\newblock J.\@ Evol.\@ Equ.~\textbf{18} (2018), no.~3, 1543--1574.

\bibitem{Pruss_Simonett}
J.~Pr\"uss and G.~Simonett.
\newblock Moving interfaces and quasilinear parabolic evolution equations. Monographs in Mathematics, vol.~105.
\newblock Birkh\"auser/Springer, Cham, 2016.

\bibitem{Shen}
Z.~Shen.
\newblock {\em Bounds of Riesz transforms on $L^p$ spaces for second order elliptic operators\/}.
\newblock Ann.\@ Inst.\@ Fourier (Grenoble)~\textbf{55} (2005), no.~1, 173--197.

\bibitem{Sohr}
H.~Sohr.
\newblock The Navier-Stokes equations. An elementary functional analytic approach. Birkh\"auser Advanced Texts: Basler Lehrb\"ucher.
\newblock Birkh\"auser Verlag, Basel, 2001.

\bibitem{Solonnikov}
V.~A.~Solonnikov.
\newblock {\em $L_p$-estimates for solutions to the initial boundary-value problem for the generalized Stokes system in a bounded domain\/}. Function theory and differential equations
\newblock J.\@ Math.\@ Sci.\@ (New York)~\textbf{105} (2001), no.~5, 2448--2484.

\bibitem{Stein}
E.~M.~Stein.
\newblock Harmonic Analysis: real-variable methods, orthogonality, and oscillatory integrals.
\newblock Princeton University Press, vol.~43. Princeten, NJ, 1993.

\bibitem{Tolksdorf}
P.~Tolksdorf.
\newblock {\em $\cR$-sectoriality of higher-order elliptic systems on general bounded domains\/}.
\newblock J.\@ Evol.\@ Equ.~\textbf{18} (2018), no.~2, 323--349.

\bibitem{Tolksdorf_convex}
P.~Tolksdorf.
\newblock {\em The Stokes resolvent problem: optimal pressure estimates and remarks on resolvent estimates in convex domains\/}.
\newblock Calc.\@ Var.\@ Partial Differential Equations~\textbf{59} (2020), no.~5, article no.~154.

\bibitem{Tolksdorf_nonlocal}
P.~Tolksdorf.
\newblock {\em $\L^p$-extrapolation of non-local operators: Maximal regularity of elliptic integrodifferential operators with measurable coefficients\/}.
\newblock J.\@ Evol.\@ Equ., \url{https://doi.org/10.1007/s00028-020-00609-7}

\bibitem{Weis}
L.~Weis.
\newblock {\em Operator-valued Fourier multiplier theorems and maximal $L_p$-regularity\/}.
\newblock Math.\@ Ann.~\textbf{319} (2001), no.~4, 735--758.

\end{biblist}
\end{bibdiv}
\end{document}